\numberwithin{equation}{section}
\newtheorem{thm}{Theorem}[section]
\newtheorem{theorem}[thm]{Theorem}
\newtheorem{cor}[thm]{Corollary}
\newtheorem{prop}[thm]{Proposition}
\newtheorem{lemma}[thm]{Lemma}
\theoremstyle{definition}
\theoremstyle{remark}
\newtheorem{rk}[thm]{Remark}
\newcommand{\N}{\mathbb{N}}
\newcommand{\kerp}{\mathrm{Ker} (P)}
\newcommand{\ind}{\mathrm{ind}}
\newcommand{\indkg}{\mathrm{ind}^{G}_{KZ}}
\newcommand{\gp}{\mathrm{Gal}\left(\bar{\mathbb{Q}}_p/\mathbb{Q}_p\right)}
\newcommand{\gl}{\mathrm{GL}_{2} (\mathbb{Q}_{p})}
\newcommand{\glfp}{\mathrm{GL}_{2} (\mathbb{F}_{p})}
\newcommand{\flap}{\lfloor \nu(a_p) \rfloor}
\newcommand{\symzp}{\mathrm{Sym}^r(\bar{\mathbb{Z}}_{p}^2)}
\newcommand{\symqp}{\mathrm{Sym}^r(\bar{\mathbb{Q}}_{p}^2)}
\begin{document}

\title{On the local constancy of certain mod $p$ Galois representations}
\author{Abhik {Ganguli}*}

\author{Suneel {Kumar}**}
\thanks {Email: *aganguli@iisermohali.ac.in,\ **suneelm145@gmail.com}
%\email{aganguli@iisermohali.ac.in}
%\email{suneelm145@gmail.com}
\thanks {\noindent Department of Mathematical Sciences, Indian Institute of Science Education and Research (IISER) Mohali, Sector 81, SAS Nagar,  Punjab-140306, India.\\
Key words: Reduction of crystalline representations,  mod $p$  local Langlands, MSC: $11$F$80$, $11$F$70$, $11$F$33$.}

\maketitle
\begin{abstract}
In this article we study local constancy of the mod $p$ reduction of certain $2$-dimensional crystalline representations of $\text{Gal}\left(\bar{\mathbb{Q}}_p/\mathbb{Q}_p\right)$ using the mod $p$ local Langlands correspondence.  We prove local constancy in the weight space by giving an explicit lower bound on the local constancy radius centered around weights going up to $(p-1)^{2} +3$ and the slope fixed in $(0,  \ p-1)$ satisfying certain constraints. We establish the lower bound by determining explicitly the mod $p$ reductions at nearby weights and applying a local constancy result of Berger.
\end{abstract}

\section{Introduction }
Let $p$ be an odd prime and $f$ be a normalized eigenform of weight $k\geq 1$, character $\psi$ and level $\Gamma_1(N)$ such that $p\nmid N$. The work of Deligne,  Deligne-Serre, Eichler-Shimura associates to $f$ a $p$-adic Galois representation $\rho_f: \text{Gal}\left(\bar{\mathbb{Q}}/\mathbb{Q}\right)\rightarrow \text{GL}_2(\bar{\mathbb{Q}}_p)$ such that $\rho_f$ is unramified at all primes $l\nmid pN$.  Further,  the characteristic polynomial of $\rho_f(\text{Frob}_l)$ (the arithmetic Frobenius at $l$) is given by $X^2-a_lX+l^{k-1}\psi(l)$, where $a_l$ is the Hecke operator $T_l$-eigenvalue of $f$. In the ordinary case,  a result of Deligne determines the mod $p$ reduction $\bar{\rho}_f|_{D_p}^{ss}$ (upto semisimplification) at the decomposition group $D_p \cong\gp$ over $p$ for weights $k\geq 2$. In the non ordinary case,  Fontaine and Edixhoven determine $\bar{\rho}_f|_{I_p}$ ($I_p$ is inertia at $p$) for $2\leq k\leq p+1$. Faltings proved that if $p\nmid N$ ($k\geq 2$) then $\rho_f|_{D_p}$ is a crystalline representation of Hodge-Tate weights $(0, k-1)$. Thus, purely local methods computing $\bar{\rho}_f|_{D_p}^{ss}$ exist that exploit the observation above. The problem of determining the mod $p$ reduction of $2$-dimensional crystalline representations of $\gp$ is a hard problem wherein the local techniques involve $p$-adic Hodge theory and more recently the mod $p$ local Langlands correspondence for $\text{GL}_{2} (\mathbb{Q}_{p})$ due to Breuil and Berger (\cite{Br03a}, \cite{Br03b}, \cite{BB10}, \cite{B10}).  Substantial work has been done using above local methods on computing the mod $p$ reduction in various ranges of slopes and weights (see for instance \cite{Br03b},\cite{BLZ},\cite{KBG},\cite{GG15},\cite{BG},\cite{BGR18},\cite{GhR}).  \\

In this article we consider the problem of local constancy of the $\bmod\ p$ reduction of certain $2$-dimensional crystalline representations of $\gp$.  Broadly speaking, we obtain local constancy in the weight space for weights $k$ up to $ (p-1)^{2} +3$ and the slope $\nu(a_p)$ fixed in $(0,  \ p-1)$ satisfying certain interdependency conditions (see Theorem \ref{intro} below). We determine an explicit radius of local constancy for these weights thereby obtaining a lower bound for the same. The key step in finding the radius above is the computation of the mod $p$ reduction of the crystalline representations that come from this neighbourhood of the weight using the mod $p$ local Langlands correspondence. \\

Let $p\geq 7$ be a prime and $\nu : \bar{\mathbb{Q}}^*_p\rightarrow \mathbb{Q}$ be the normalized valuation such that $\nu(p) = 1$.  Let $0\not=a_p\in\bar{\mathbb{Q}}_p$ with $\nu(a_p)>0$, and $k\geq 2$ be an integer.  Let $V_{k,a_p}$ be the irreducible, $2$-dimensional crystalline Galois representation of $\gp$ with Hodge-Tate weights $(0, \ k-1)$ such that $D_{cris}(V^*_{k,a_p})\cong D_{k,a_p}$ where $D_{cris}$ is the Fontaine's functor and $D_{k,a_p}$ is the admissible filtered module given in \cite{BLZ}. We note in passing that the crystalline Frobenius on $D_{k,a_p}$ has the characteristic polynomial $X^2-a_pX+p^{k-1}$.  Let $\bar{V}_{k,a_p}$ be the reduction of a $\gp$-stable lattice of $V_{k,a_p}$ upto semisimplification.  Our aim is to obtain local constancy of $\bar{V}_{k,a_p}$ in the weight space with a fixed positive slope $\nu(a_p)$. The evidence for local constancy is seen in results computing $\bar{V}_{k,a_p}$ for small slope.  From these results and Berger's theorem (Theorem B, \cite{Berger12}, \cite{Berger} or Theorem \ref{Berger} below) we expect local constancy to hold if $k$ and $k'$ are $p$-adically close enough and are in the same class modulo $p-1$.  \\

The first result giving an explicit upper bound for Berger's constant $m(k,a_p)$ is given in \cite{maam} for small weights with conditions on the slope similar to Theorem \ref{intro}.  
%For small weights and large slope Shalini Bhattacharya gave an explicit upper bound $2\nu(a_p)+1$ on Berger's constant $m(k,a_p)$ in \cite{maam}. \\ 
More precisely,  we write the weight $k$ in the form $b+c(p-1)+2$, where $b , c$ are assumed to be in the range $2\leq b\leq p-1, \ 0\leq c\leq 3$ respectively,  and such that $b\geq 2c$ and $k\not\equiv 3\bmod (p+1)$.  If the slope is in $(c, \frac{p}{2}+c)$ and weight $k>2\nu(a_p)+2$ it is shown in \cite{maam} that the Berger constant  $m(k,a_p)$ exists and bounded above by $2\nu(a_p)+1$. Our main result of this article is as follows:
\begin{thm}{\label{intro}}
Let $k =b+c(p-1)+2$ with $2\leq b\leq p$ and $0\leq c\leq p-2$. Fix $a_p$ such that $k>2\nu(a_p)+2$ and $c<\nu(a_p)<\text{min}\{\frac{p}{2}+c-\epsilon,\ p-1\}$ where $\epsilon$ is defined as in (\ref{dfn epsilon}). Further if $b \not\in\{2c+1, \ 2c-1, \ 2c-p, \ 2(c-1)-p\}$ and $(b,c)\not =(p,0)$ then the Berger's constant $m(k,a_p)$ exists such that $m(k,a_p)\leq\lceil 2\nu(a_p)\rceil+\epsilon+1$.  Moreover, $\bar{V}_{k',a_p}\cong \ind\left(\omega^{k-1}_2\right)$ for all $k' \in k+p^{t}(p-1)\mathbb{Z}^{\geq 0}$,  where $ t \geq \lceil 2\nu(a_p)\rceil+\epsilon$.
\end{thm}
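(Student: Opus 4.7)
The plan is to prove the explicit computation of $\bar{V}_{k',a_p}$ for every $k'$ in the stated arithmetic progression $k+p^t(p-1)\mathbb{Z}^{\geq 0}$ and then invoke Theorem \ref{Berger} to conclude the upper bound on $m(k,a_p)$. The core of the computation proceeds via the mod $p$ local Langlands correspondence for $\gl$: as in \cite{Br03b}, \cite{BG}, and \cite{maam}, one realizes $V_{k',a_p}$ through the quotient $\indkg\symqp/(T-a_p)$ with $r=k'-2$, reduces modulo $p$, and identifies the resulting $\glfp$-representation with the Galois side by Breuil's classification of supersingular representations.

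First, I would construct an explicit approximate Hecke eigenvector in the integral lattice $\indkg\symzp$, building it out of the standard basis of symmetric polynomials and adjusting coefficients (involving products of binomial coefficients and powers of $a_p$) so that $(T-a_p)$ applied to it lies in $p^{N}\indkg\symzp$ for some $N$ larger than the cutoff $\lceil 2\nu(a_p)\rceil+\epsilon$. The slope hypothesis $c<\nu(a_p)<\min\{p/2+c-\epsilon,\,p-1\}$, combined with the explicit definition of $\epsilon$ in (\ref{dfn epsilon}), is precisely what is required for the error terms to satisfy this valuation bound. The excluded values $b\notin\{2c+1,\,2c-1,\,2c-p,\,2(c-1)-p\}$ and $(b,c)\neq(p,0)$ remove the degenerate configurations in which the dominant monomial of the eigenvector would itself vanish modulo $p$ or in which an additional Serre weight would pollute the reduction. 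Reading off the surviving Serre weight identifies $\bar V_{k,a_p}^{ss}$ with the irreducible supersingular representation corresponding to $\ind(\omega_2^{k-1})$.

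The same construction applied to any $k'=k+p^t(p-1)n$ with $t\ge\lceil 2\nu(a_p)\rceil+\epsilon$ yields an approximate eigenvector whose dominant term is unchanged modulo $p$. Indeed, $k'\equiv k\pmod{p-1}$ preserves the class of the relevant Serre weight, and the stronger congruence modulo $p^t(p-1)$ forces the extra terms $\binom{k'-2}{j}$ for $j$ outside the critical window to have $p$-adic valuation exceeding the cutoff. This gives $\bar V_{k',a_p}\cong\ind(\omega_2^{k-1})$ for every such $k'$, and Theorem \ref{Berger} then translates this uniform behaviour into the bound $m(k,a_p)\le\lceil 2\nu(a_p)\rceil+\epsilon+1$. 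The main obstacle I expect is the valuation bookkeeping for the approximate eigenvector: in \cite{maam} the parameter $c$ was bounded by $3$, so only a small number of candidate Serre weights had to be compared, whereas allowing $0\le c\le p-2$ opens up a much richer collection of binomial-coefficient cancellations. Controlling these uniformly across the family, and pinning down the precise boundary cases that force the exclusion list for $b$ and the appearance of $\epsilon$, is the delicate technical heart of the argument.
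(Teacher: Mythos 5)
Your overall framework --- computing $\bar{\Theta}_{k',a_p}$ via the mod $p$ local Langlands correspondence for every $k'$ in the progression and then invoking Berger's theorem --- is the same as the paper's, but the step you describe as ``reading off the surviving Serre weight'' from a single approximate eigenvector conceals essentially all of the content and, as formulated, would not work. For $r=k'-2$ large, $V_r$ has many Jordan--H\"older factors, and the paper's mechanism is the filtration of $V_r$ by divisibility by $\theta=x^py-xy^p$: one must show that $P:\indkg(V_r)\twoheadrightarrow\bar{\Theta}_{k',a_p}$ factors through exactly one successive quotient $\indkg\bigl(V^{(n)}_r/V^{(n+1)}_r\bigr)$ and identify $n=c-\epsilon$. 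This requires two different eliminations: for $m<c-\epsilon$ one produces submodules $W_m\subset V^{(m)}_r\cap\kerp$ that still surject onto $V^{(m)}_r/V^{(m+1)}_r$ (via families of monomials shown to be simultaneously in $\kerp$ and congruent to the generator $F_m$ modulo $V^{(m+1)}_r+\kerp$), while for $m>c-\epsilon$ one shows $F_m$ itself lies in $\kerp$. None of this structure appears in your sketch, and the dependence of $n$ on $b$ through $\epsilon$ --- the new phenomenon relative to \cite{maam} --- is exactly what a ``dominant monomial'' heuristic cannot detect.

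Moreover, the tool you propose points the wrong way. A function $f$ with $(T-a_p)f\in p^{N}\indkg(\symzp)$ tells you nothing about which Jordan--H\"older factors die in $\bar{\Theta}_{k',a_p}$; what is needed are functions $f$ with $(T-a_p)f$ integral and congruent mod $p$ to prescribed elements $[g,v]$ generating the unwanted factors, so that those elements lie in $\kerp$ --- this is the content of Propositions \ref{gen 1} and \ref{gen 2} together with the matrix-inversion arguments that isolate individual monomials from the resulting sums. Two further points are glossed over: the direct computation applies only to $k'>k$ (it needs $r>s$ and $r$ large), so both the value at $k$ itself and the bound on $m(k,a_p)$ require Berger's theorem, whose hypothesis $k>3\nu(a_p)+\frac{(k-1)p}{(p-1)^2}+1$ is strictly stronger than the stated $k>2\nu(a_p)+2$ and must be verified separately (Corollary \ref{vkap} does this using $k\leq (p-1)^2+3$ and $p\geq 7$, falling back on \cite{BLZ} when $\nu(a_p)>c+1$).
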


We take $p\geq 7$ in order to apply Berger's theorem in Corollary \ref{vkap}. In our theorem, the conditions $2c\leq b\leq p-1$ and $c\leq 3$ in \cite{maam} are no longer there to include all $2\leq b\leq p$ and $0\leq c\leq p-2$ with above constraints.  For example, with $c = p-2$ we cover all the values of $b$ for sufficiently large slope $p-2<\nu(a_p)<p-1$.  More precisely,  if $c\geq \frac{p}{2}+1$ then one could take the slope $\nu(a_p)$ in $(c,\ p-1)$.  Note that the upper bound of $p/2 +c$ for the slope in \cite{maam} is assumed to be at most $p-1$.  This is because with $\nu(a_p)< p-1$ (and $k-2 > 2 \nu(a_p)$) one is able to apply Lemma $3.2$ in \cite{maam} (Lemma \ref{lm3.2} below).  In the theorem above the lower bound on $k$ holds in any case for $c\geq 2$.  Allowing for $c\geq 3$ and $b\leq 2c-2$ in our result makes the analysis significantly more involved, also revealing interesting phenomena discussed below.\\
 
The approach in \cite{maam} and our result is to show that the surjection $P: \indkg \left(V_r\right)\rightarrow\bar{\Theta}_{k',a_p}$ factors through a successive quotient $\indkg \left(\frac{V^{(n)}_r}{V^{(n+1)}_r}\right)$ for $k' = r+2 \in k+p^{t}(p-1)\mathbb{Z}^{> 0}$, and for some $n \leq \lfloor \nu(a_p) \rfloor$ (see (\ref{filt})).  Using the mod $p$ local Langlands correspondence,  we obtain the result above in the generic irreducible case (Proposition \ref{final prop}).  In \cite{maam}, $n$ remains constant and is equal to $c$ where the hypothesis $b\geq 2c$ plays a crucial role.  Interestingly in our case, for a fixed $c$, $n$ varies accordingly as $b$ lies in $[2, \ 2c-2-p-1], [2c-2-p,\ 2c-2]$ or $[2c-1, \ p]$. More precisely, $n = c-\epsilon$ (if $(b, c) \neq (p, 0)$, Theorem \ref{combining}) where $\epsilon$ is as defined in \eqref{dfn epsilon}.  We show that all the Jordan Holder factors coming from $ \indkg \left(\frac{V^{(m)}_r}{V^{(m+1)}_r}\right)$ where $0\leq m \leq \flap$ and $m\not=n$ do not contribute to $\bar{\Theta}_{k, a_p}$. 
In fact, our proof splits naturally into two parts: $0\leq m<n$ and $n<m\leq \flap$ with a substantial difference in the analysis treating these two regimes.  A crucial observation in \cite{maam} (Lemma \ref{lm3.2} below) is that the successive quotients $ \frac{V^{(m)}_r}{V^{(m+1)}_r}$ are generated by the polynomial $F_m(x,y)$.  \\

In Propositions \ref{mono 1} \& \ref{mono 1.2}, we give a family of monomials say $\{P_{a , m}\}$ for each $1 \leq m < c- \epsilon$ that are in $\kerp$.  A subset of these monomials are used in Proposition \ref{other generator} to obtain (for each $m$) a family of monomials $\{Q_{a, m}\}$ depending also on $\epsilon$ that are $F_m(x,y)$ (up to a unit) modulo $V_{r}^{(m+1)}+\kerp$.  We next obtain a monomial that is in both $\{P_{a , m}\}$ and $\{Q_{a, m}\}$ for each $1 \leq m < c- \epsilon$.   
Exploiting this we show (with some technical computation for $m = 0$) that the successive quotients do not contribute to $\bar{\Theta}_{k',a_p}$ for $0\leq m<c-\epsilon$  (Proposition \ref{m<slope}, Lemma \ref{lm m<c}).  The cases $m > c - \epsilon$ differ from above in that the polynomial $F_m(x,y)$ is directly shown to be in $\kerp$ (Proposition \ref{m>c} \& Lemma \ref{lm m>c}).  We further use the monomials $P_{a , m}$ to eliminate the possibility of $\bar{V}_{k',a_p}$ being reducible in some non generic cases.   \\

%If either $2 \leq b \leq c-1$ with $b \leq m \leq c-1$ or $c \leq b \leq p$, then Propositions \ref{mono 1} \& \ref{mono 1.2} show that indeed any of the above monomials $Q_{a,m}$ are in $\kerp$.  For smaller values of $m$ in the remaining case $2 \leq b \leq c-1$, we are still able to find a $Q_{a,m}$ in $\kerp$ from Proposition \ref{mono 1.2}. 

To generate the monomials $P_{a , m}$ (and $F_m$ if $m > c - \epsilon$) we consider for each $m$ a matrix $A$ over $\mathbb{Z}_{p}$ coming naturally from Propositions \ref{gen 1} \& \ref{gen 2}, and show that the columns space of $A$ contains the vectors needed to generate these monomials. The matrices are typically of size $c \times c$ ($1 \leq c \leq p-2$) with entries given by products of binomial coefficients, and thus require proving certain binomial identities (see \S3). 
The computations involving the Hecke operator $T$ in Propositions \ref{gen 1} \& \ref{gen 2} require a delicate choice of functions to obtain the specific polynomials. The lower bounds $\lceil 2\nu(a_p)\rceil+\epsilon$ and $2\nu(a_p)+2$ on $t$ and $k$ respectively play a crucial role in making certain terms vanish modulo $p$ in the above computations.  The vanishing of these terms also require the mod $p$ congruences in Lemma \ref{srjm} as well as the precise valuations of binomial coefficients in Lemma \ref{rk3.15}. \\

For the weight $k$ in our range we have $\lfloor\frac{k-2}{p-1}\rfloor = c<\nu(a_p)$ barring a few exceptions.  Therefore,  \cite{BLZ} implies $\bar{V}_{k,a_p}\cong\ind(\omega^{k-1}_2)$ whenever $(p+1)\nmid (k-1)$ and reducible otherwise.  Using this fact together with the mod $p$ local Langlands correspondence, one can predict the integer $n$ in Proposition \ref{vrc 1}. Theorem \ref{combining} and Proposition \ref{vrc 1} imply that the reducible cases can occur only if $b\in\{2c+1, 2c-1, 2c-3, 2c-p, 2c-2-p, 2c-4-p\}$ or if $(b,  c)\in\{(p-2, 0), (p, 0),(p,1)\}$.  If there is local constancy,  we expect from \cite{BLZ} that $\bar{V}_{k,a_p}$ always be reducible if $b \in\{ 2c-1, \ 2(c-1)-p\}$ or $(b,c) =(p,0)$ (indeed $(p+1) | (k-1)$ only in these cases),  and be irreducible in all other cases.  In Proposition \ref{final prop} we show that if $b\in\{ 2c-3, 2c-4-p\}$ or $(b, c)\in\{(p-2,  0), (p,1)\}$ then $\bar{V}_{k,a_p}$ is indeed irreducible.  We intend to report soon on the remaining exceptional cases in our ongoing work.\\

We now discuss some results on local constancy extant in the literature.  The overlap of the constraints in our theorem with that of the zig-zag conjecture for slope $\nu (a_{p}) = 3/2$ proven in Theorem $1.1$ in \cite{gv} is precisely when $b=3$, $c=1$, and so $k = p+4$.  Indeed in this situation,  we deduce local constancy for $t\geq 1$ from \cite{gv} with the reduction given as $\ind \ (\omega_{2}^{k-1})$.  We note that this weight lies in the exceptional case $b = 2c+1$, the reduction being irreducible and compatible with \cite{BLZ}.  Further from \cite{gv},  we observe that if $c=0$ (i.e., $k =5$) and $p \geq 7$,  local constancy is violated when $a_p = p^{3/2}$ whereas local constancy is preserved (for $t \geq 2$) when one chooses $a_p = p^{3/2} u$ with $ u^2 = 1 + p^{1/2}$ (see also \S $1.3$, \cite{cgy} for the case $k=4$ and $a_p =p$).  This illustrates the subtle phenomenon of the existence of local constancy depending on $a_p$ and $k$.  We also refer to Theorem $9.2.1$ in \cite{sandra} which gives an algorithm to compute effectively a radius of local constancy for small weight $k$ and prime $p$. The result in Corollary $1.12$ of \cite{GhR} can be seen proving local constancy in a regime that has very little overlap with our result which requires the BLZ condition $c<\nu(a_p)$.  Indeed the only common cases are when $c=0$ (with $r_0 = b$) or $k = 2p+1$ (i.e., $c=1, \ b=p$) wherein both results give the same reduction.

\section{Background}
\subsection{The mod $p$ local Langlands correspondence} 
We begin by recalling some notations and definitions.  We fix an algebraic closure $\bar{\mathbb{Q}}_{p}$ of $\mathbb{Q}_{p}$ with the ring of integers $\bar{\mathbb{Z}}_{p}$ and the residue field $\bar{\mathbb{F}}_{p}$. Let $G_p$ and $G_{p^2}$ be the absolute Galois groups of $\mathbb{Q}_p$ and $\mathbb{Q}_{p^2}$ respectively where $\mathbb{Q}_{p^2}$ is the unique unramified quadratic extension of $\mathbb{Q}_p$.  Let $\omega_1 =\omega$ be the mod $p$ cyclotomic character, and $\omega_2$ be a fixed fundamental character of level $2$. We view $\omega_1$ and $\omega_2$ as characters of $\mathbb{Q}^*_p$ via local class field theory (identifying uniformizers with geometric Frobenii). For $a\in\mathbb{Z}^{\geq 0}$ such that $(p+1)\nmid a$ let $\ind(\omega^a_2)$ denote the unique two dimensional irreducible representation of $G_p$ with determinant $\omega^a$ and whose restriction to inertia is isomorphic to $\omega^a_2\oplus\omega^{ap}_2$.\\

We denote the group $\gl$ by $G$, its maximal compact subgroup $\mathrm{GL}_2\left(\mathbb{Z}_p\right)$ by $K$ and the center of $G$ by $Z\cong \mathbb{Q}^*_p$.  For $r\geq 0$  let $V_r: = \text{Sym}^r(\bar{\mathbb{F}}_{p}^2)$ be the symmetric power representation of $\mathrm{GL}_2(\mathbb{F}_p)$ of dimension $r+1$.  We can also view $V_r$ as representations of $KZ$ by defining the action of $K$ through the natural surjection $K\twoheadrightarrow \mathrm{GL}_2(\mathbb{F}_p)$, and by letting $p$ act trivially. % Let $\indkg(V_r)$ denotes the space of $V_r$-valued function of $G$ which are compactly supported modulo $KZ$.  Thus,  $\indkg(V_r)$ is span of the functions $[g, \ v]$ for $g\in G$ and $v\in V_r$. 
For $0\leq r\leq p-1, \ \lambda\in\bar{\mathbb{F}}_p$ and a smooth character $\eta :\mathbb{Q}^*_p\rightarrow \bar{\mathbb{F}}^*_p$,  the representation
$$\pi(r, \lambda, \eta) := \frac{\indkg(V_r)}{T-\lambda}\otimes(\eta\circ\det)$$
is a smooth admissible representation of $G$ where $\indkg$ denotes compact induction (see \cite{Br03a},  \cite{KBG}).  The operator $T$ (see \S \ref{Hecke}) is the Hecke operator $T_p$ generating the Hecke algebra\linebreak $ \text{End}_{G} ( \indkg (V_{r})) = \bar{\mathbb{F}}_{p} [T_p]$. These representations give all the irreducible smooth admissible representations of $G$ (\cite{BL94},\cite{BL95},\cite{Br03a}).  For $\lambda\in\bar{\mathbb{F}}_p$, let $\mu_\lambda$ be the unramified character of $G_p$ that sends the geometric Frobenius to $\lambda$. Then Breuil 's semisimple $\bmod\ p$ local Langlands correspondence $LL$ (see \cite{Br03b}) is as follows:
\begin{itemize}
\item $\lambda = 0:$ \hspace{5em} $\ind(\omega^{r+1}_2)\otimes\eta\xleftrightarrow{ LL}  \pi(r, 0 , \eta)$\\
\item $\lambda\not=0:$\hspace{5em} $(\mu_\lambda\omega^{r+1}\oplus\mu_{\lambda^{-1}})\otimes\eta\xleftrightarrow{LL} \pi(r,\lambda,\eta)^{ss}\oplus\pi([p-3-r], \lambda^{-1},\omega^{r+1}\eta)^{ss}$ \\
where $\{0, 1, ..., p-2\}\ni [p-3-r]\equiv p-3-r\bmod (p-1)$.\\
\end{itemize}

For integers $k\geq 2$ we define $\Pi_{k,a_p}: = \frac{\indkg(\symqp)}{T-a_p}$ as representations of $G$ where $r=k-2$ and $T$ is the Hecke operator from \S \ref{Hecke}. We consider the $G$-stable lattice $\Theta_{k,a_p}$ in the irreducible representation $\Pi_{k,a_p}$ (see  \cite{Br03b}, \cite{BB10}) given by 
$$\Theta_{k,a_p}:= \text{image}\left(\indkg(\symzp)\rightarrow\Pi_{k,a_p}\right)\cong\frac{\indkg(\symzp)}{(T-a_p)\indkg(\symqp)\cap\indkg(\symzp)}.$$
By the compatibility of the $p$-adic and mod $p$ local Langlands correspondences (\cite{Br03b},\cite{B10}, \cite{BB10}) we have 
$$\bar{\Theta}^{ss}_{k,a_p}\cong LL(\bar{V}_{k,a_p})\quad \text{where}\quad\bar{\Theta}_{k,a_p}:=\Theta_{k,a_p}\otimes \bar{\mathbb{F}}_p.$$
Since the $\bmod\ p$ local Langlands correspondence is injective,  to determine $\bar{V}_{k,a_p}$ it is enough to compute $\bar{\Theta}_{k,a_p}^{ss}.$\\ 

\subsection{Hecke Operator T}\label{Hecke} We give an explicit definition of the Hecke operator $T = T_p$ below (see \cite{Br03b} for more details). For $m = 0$,  set $I_0 = \{0\}$  and for $m>0$, let $I_m = \{[\lambda_0]+p[\lambda_1]+...+p^m[\lambda_{m-1}]\ |\ \lambda_i\in\mathbb{F}_p\}\subset\mathbb{Z}_p$ where square brackets denote Teichm\"{u}ller representatives.  For $m\geq 1$ there is a truncation map $[\ ]_{m-1}: I_m\rightarrow I_{m-1}$ given by taking the first $m-1$ terms in the $p$-adic expansion above.  For $m = 1$, $[\ ]_{m-1}$ is the zero map.  For $m\geq 0$ and $\lambda\in I_m$,  let
$$g^0_{m, \lambda} = \begin{pmatrix}
p^m & \lambda\\
0 & 1
\end{pmatrix}
\quad\text{and }\quad
g^1_{m, \lambda} = \begin{pmatrix}
1 & 0\\
p\lambda & p^{m+1}
\end{pmatrix}. $$
Then we have
$$ G =\underset{\substack{m\geq 0, \lambda\in I_m\\ i\in\{0,1\}}} \coprod KZ(g^i_{m,\lambda})^{-1}.$$
Let $R$ be a $\mathbb{Z}_p$-algebra and $V = \text{Sym}^rR^2$ be the symmetric power representation of $KZ$, modelled on homogeneous polynomials of degree $r$ in the variables $x$ and $y$ over $R$. For $g\in G, \ v\in V$,  let $[g, \ v]$ be the function defined by: $[g, v](g') = g'g\cdot v$ for all $g'\in KZg^{-1}$ and zero otherwise.  Since an element of $\indkg (V)$ is a $V$-valued function on $G$ that has compact support  modulo $KZ$, one can see that every element of  $\indkg(V)$ can be written as a finite sum of $[g, v]$ with $g= g^0_{m\lambda}$ or $g = g^1_{m, \lambda}$, for some $\lambda\in I_m$ and $v\in V$. Then the action of $T$ on $[g, v]$ can be given explicitly when $g = g^0_{n,\mu}$ with $n\geq 0$ and $\mu\in I$.  Let $v = \sum\limits_{j = 0}^{r}c_jx^{r-j}y^j$,  with $c_j\in R$.  We write $T = T^++T^-$ where
%Let us recall $\indkg(V)$ denote the space of $V$-valued function of $G$ which are compactly supported modulo $KZ$.  Thus,  $\indkg(V)$ is span of the functions $[g, \ v]$ for $g\in G$ and $v\in V$ where $[g,\ v]$ is function on $G$ defined by $[g, v](g') = g'g\cdot v$ for all $g'\in KZg^{-1}$ and zero otherwise.  Therefore every element of  $\indkg(V)$ can we written as finite linear combination of $[g, v]$ with $g= g^0_{m\lambda}$ or $g = g^1_{m, \lambda}$, for some $\lambda\in I_m$ and $v\in V$.  Let $v =\sum\limits_{j = 0}^{r}c_jx^{r-j}y^j\in \text{Sym}^{r}\bar{\mathbb{Q}}^2_p\otimes D^s$.  By \cite{maam},  one can write $T = T^+-T^-$ where 
\begin{eqnarray*}
T^+([g^0_{n,\mu},v]) &=& \underset{\lambda\in I_1}\sum \left[g^0_{n+1,\mu+p^n\lambda}, \sum\limits_{j = 0}^{r}p^j\left(\sum\limits_{i= j}^{r}c_i{i\choose j}(-\lambda)^{i-j}\right)x^{r-j}y^j\right]\\
T^-([g^0_{n,\mu}, v]) &=& \left[g^0_{n-1, [\mu]_{n-1}}, \sum\limits_{j = 0}^{r}\left(\sum\limits_{i= j}^{r}p^{r-i}c_i{i\choose j}\left(\frac{\mu-[\mu]_{n-1}}{p^{n-1}}\right)^{i-j}\right)x^{r-j}y^j\right]\quad \text{for}\ n>0\\
T^-([g^0_{n,\mu}, v]) &=& \left[\alpha, \sum\limits_{j =0}^{r}p^{r-j}c_jx^{r-j}y^j\right]\quad\text{for}\ n=0,\ \text{where}\ \alpha: = g^1_{0,0}.
\end{eqnarray*}
\subsection{The filtration} 
Let $r = k'-2\geq (\nu +1)(p+1)$, where $\nu: = \lfloor\nu(a_p)\rfloor$.  From the definition of $V_r$ and $\bar{\Theta}_{k',a_p}$ it follows that there is a natural surjection
 $$P: \indkg(V_r)\twoheadrightarrow\bar{\Theta}_{k',a_p}.$$
Now let us consider the Dickson polynomial $ \theta := x^py-xy^p\in V_{p+1}.$ Here we note that $\text{GL}_2(\mathbb{F}_p)$ acts on $\theta$ by the determinant character. For $m\in\mathbb{N}$, let us denote 
$$V^{(m)}_r = \{f\in V_r\ | \ \theta^m \ \text{divides}\ f\ \text{ in}\ \bar{\mathbb{F}}_p[x,y]\}$$
which is a subrepresentation of $V_r$.  By using Remark $4.4$ of \cite{KBG}, one can see that the map $P$ factors through $\indkg\left(\frac{V_r}{V^{(\nu+1)}_r}\right)$, where $\nu: = \lfloor\nu(a_p)\rfloor$.  So let us consider the following chain of submodules
\begin{eqnarray}{\label{filt}}
0\subseteq\indkg\left(\frac{V^{(\nu)}_r}{V^{(\nu +1)}_r}\right)\subseteq \indkg\left(\frac{V^{(\nu-1)}_r}{V^{(\nu+1)}_r}\right)\subseteq ...\subseteq\indkg\left(\frac{V_r}{V^{(\nu+1)}_r}\right).
\end{eqnarray}
 For $0\leq m\leq \nu$,  observe that $\indkg\left(\frac{V^{(m)}_r}{V^{(m+1)}_r}\right)$ are the successive quotients in the above filtration.  In the following two lemmas we make precise the notion of a successive quotient not contributing to $\bar{\Theta}_{k',a_p}$ via the map $P$. 
 
\begin{lemma}{\label{lm m<c}}
Let $1\leq n\leq \nu : = \lfloor\nu(a_p)\rfloor$ and assume for $0\leq m\leq n-1$ that there exists $W_m\subset V^{(m)}_r$ such that $P\left(\ind^{G}_{KZ}(W_m)\right) = 0$ and $W_m\twoheadrightarrow \frac{V^{(m)}_r}{V^{(m+1)}_r}$. Then the map $P$ restricted to $\ind^{G}_{kZ}\left(\frac{V^{(n)}_r}{V^{(\nu+1)}_r}\right)$ is a surjection.
\end{lemma}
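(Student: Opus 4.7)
The plan is to prove this by an elementary cascading argument based on the exactness of the compact induction functor $\ind^{G}_{KZ}$. The key observation is that once the $W_m$'s are in hand, the lemma follows from a purely formal manipulation of sums of subrepresentations; the real mathematical content sits entirely in the hypothesis.

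First I would exploit the hypothesis to obtain, for each $0 \leq m \leq n-1$, the identity
$$V_r^{(m)} = W_m + V_r^{(m+1)}$$
of $KZ$-subrepresentations of $V_r$, which is simply the statement that $W_m$ surjects onto the quotient $V_r^{(m)}/V_r^{(m+1)}$. Since $[g, w+w'] = [g, w] + [g, w']$ on generators (equivalently, $\ind^{G}_{KZ}$ is exact and additive on internal sums), this gives
$$\ind^{G}_{KZ}\bigl(V_r^{(m)}\bigr) = \ind^{G}_{KZ}(W_m) + \ind^{G}_{KZ}\bigl(V_r^{(m+1)}\bigr).$$
Applying $P$ and using the vanishing $P\bigl(\ind^{G}_{KZ}(W_m)\bigr) = 0$ then yields $P\bigl(\ind^{G}_{KZ}(V_r^{(m)})\bigr) = P\bigl(\ind^{G}_{KZ}(V_r^{(m+1)})\bigr)$ for every $0 \leq m \leq n-1$.

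Cascading this identity from $m = 0$ up to $m = n-1$ and using that $P$ is surjective onto $\bar{\Theta}_{k', a_p}$ gives
$$\bar{\Theta}_{k', a_p} \;=\; P\bigl(\ind^{G}_{KZ}(V_r)\bigr) \;=\; P\bigl(\ind^{G}_{KZ}(V_r^{(0)})\bigr) \;=\; \cdots \;=\; P\bigl(\ind^{G}_{KZ}(V_r^{(n)})\bigr).$$
Since $P$ factors through $\ind^{G}_{KZ}(V_r / V_r^{(\nu+1)})$ by Remark $4.4$ of \cite{KBG}, and since $n \leq \nu < \nu + 1$, its restriction to $\ind^{G}_{KZ}(V_r^{(n)})$ descends to a map out of $\ind^{G}_{KZ}\bigl(V_r^{(n)}/V_r^{(\nu+1)}\bigr)$, which by the chain of equalities just displayed is surjective onto $\bar{\Theta}_{k', a_p}$.

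The ``hard step'' in this lemma is not the argument itself, which is essentially formal, but rather verifying the hypothesis in practice: one must exhibit, for each $0 \leq m \leq n-1$, an explicit $KZ$-subrepresentation $W_m \subset V_r^{(m)}$ that both surjects onto the single-generator quotient $V_r^{(m)}/V_r^{(m+1)}$ (via the polynomial $F_m(x,y)$ of Lemma \ref{lm3.2}) and lies in $\ker P$. This is precisely what the subsequent propositions of the paper accomplish through the family of monomials $P_{a,m}$ and $Q_{a,m}$ alluded to in the introduction, and is where all the genuine work (the binomial identities of \S 3, the Hecke operator computations, and the valuation estimates from Lemmas \ref{srjm} and \ref{rk3.15}) resides.
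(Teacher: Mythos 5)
Your proof is correct and is exactly the formal cascading argument the paper has in mind (the lemma is stated without proof there, all the substance being deferred to the construction of the $W_m$ in Proposition \ref{m<slope}): the identities $V^{(m)}_r = W_m + V^{(m+1)}_r$, additivity of $\ind^{G}_{KZ}$ on internal sums, and the factorization through $\ind^{G}_{KZ}\left(V_r/V^{(\nu+1)}_r\right)$ from Remark $4.4$ of \cite{KBG} are precisely the ingredients needed. The only point worth making explicit is that each $W_m$ is taken to be a $KZ$-subrepresentation (as it is in the applications), so that $\ind^{G}_{KZ}(W_m)$ is defined.
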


\begin{lemma}{\label{lm m>c}}
Let $1\leq n\leq \nu :=  \lfloor\nu(a_p)\rfloor$ and suppose for $n\leq m\leq \nu$ that there exists $G_m(x,y)\in V_r$ such that $P([g, \ G_m(x,y)]) = 0$. If $G_m(x,y)$ generates $\frac{V^{(m)}_r}{V^{(m+1)}_r}$ then the map $P$ factors through  $\ind^{G}_{kZ}\left(\frac{V_r}{V^{(n)}_r}\right)$.
\end{lemma}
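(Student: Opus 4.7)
The plan is to prove by downward induction on $m$, ranging from $\nu$ down to $n$, that $\indkg(V^{(m)}_r) \subseteq \kerp$. Applying this with $m = n$ yields the desired factorisation of $P$ through $\indkg(V_r/V^{(n)}_r)$.

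The central observation is the following $KZ$-stability property. Since $P$ is $G$-equivariant and compact induction satisfies $[gk,v] = [g,kv]$ for every $k \in KZ$ (an immediate consequence of the defining formula $[g,v](g') = g'g \cdot v$ on $KZg^{-1}$), the subset
\[
N := \{\, v \in V_r \mid P([g,v]) = 0 \text{ for all } g \in G \,\}
\]
is a $KZ$-subrepresentation of $V_r$. Moreover, by $G$-equivariance of $P$, the vanishing $P([g_0,v]) = 0$ at a single $g_0 \in G$ already forces $P([g,v]) = 0$ for every $g$; so the hypothesis $P([g, G_m(x,y)]) = 0$ gives $G_m \in N$ for each $n \leq m \leq \nu$. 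The base case $V^{(\nu+1)}_r \subseteq N$ follows from the observation (used just above in defining the filtration (\ref{filt})) that $P$ factors through $\indkg(V_r/V^{(\nu+1)}_r)$.

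For the inductive step, suppose $V^{(m+1)}_r \subseteq N$ for some $n \leq m \leq \nu$. Combined with $G_m \in N$ and the $KZ$-stability of $N$, it follows that $N$ contains the $KZ$-submodule of $V_r$ generated by $G_m$ together with $V^{(m+1)}_r$. By the hypothesis that $G_m$ generates $V^{(m)}_r/V^{(m+1)}_r$, this submodule is precisely $V^{(m)}_r$, which completes the induction. Iterating from $m = \nu$ down to $m = n$ gives $V^{(n)}_r \subseteq N$, and hence the lemma.

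The argument is essentially formal once the hypotheses are in hand; the only bookkeeping subtlety is the verification that $N$ is $KZ$-stable, which relies on the identity $[gk,v] = [g,kv]$. All the genuine computational effort is deferred to the construction of the polynomials $G_m$ themselves, which is carried out later in the paper (e.g.\ Proposition~\ref{m>c}) using the explicit Hecke action formulas from \S\ref{Hecke} together with the congruences in Lemma~\ref{srjm} and the valuation estimates in Lemma~\ref{rk3.15}.
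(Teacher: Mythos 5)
Your proof is correct, and since the paper states Lemma \ref{lm m>c} without proof, your argument supplies exactly the standard reasoning the authors leave implicit: the set of $v$ with $[g,v]\in\kerp$ is a $KZ$-submodule of $V_r$ (via $[gk,v]=[g,kv]$ and $G$-equivariance of $P$), so downward induction from the base case $V^{(\nu+1)}_r\subseteq\kerp$ (Remark 4.4 of \cite{KBG}) using the generators $G_m$ gives $\indkg(V^{(n)}_r)\subseteq\kerp$. The only point worth making explicit is that the hypothesis should be read as $G_m\in V^{(m)}_r$ (as it is in the application, where $G_m=F_m$ is divisible by $\theta^m$), so that the $KZ$-module generated by $G_m$ together with $V^{(m+1)}_r$ is all of $V^{(m)}_r$.
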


Next, we determine the Jordan-Holder factors of the successive quotients $\frac{V^{(n)}_r}{V^{(n+1)}_r}$.  Let us write $r-n(p+1) = r'+d'(p-1)$ such that $p\leq r'\leq 2p-2$ and for some $d'\in\mathbb{Z}^{\geq 0}$.  By $(4.1)$ and $(4.2)$ of \cite{glover} together with Lemma $5.1.3$ of \cite{Br03b} gives:\\
(i) if $r' = p$ then
\begin{eqnarray}{\label{r' = p}}
0\longrightarrow V_1\otimes D^n\longrightarrow\frac{V^{(n)}_r}{V^{(n+1)}_r}\longrightarrow V_{p-2}\otimes D^{n+1}\longrightarrow 0.
\end{eqnarray}
The first map sends $(x, \ y)$ to $(\theta^n x^{r-n(p+1)}, \ \theta^n y^{r-n(p+1)})$ and the second map sends $\theta^n x^{r-n(p+1)-1}y$ to $x^{p-2}$.\\
(ii) if $r'\not = p$ then 
\begin{eqnarray}{\label{r' not p}}
0\longrightarrow V_{r'-(p-1)}\otimes D^n\longrightarrow\frac{V^{(n)}_r}{V^{(n+1)}_r}\longrightarrow V_{2(p-1)-r'}\otimes D^{n+r'-(p-1)}\longrightarrow 0.
\end{eqnarray}
The first map sends $(x^{r'-(p-1)}, \ y^{r'-(p-1)})$ to $(\theta^n x^{r-n(p+1)}, \ \theta^ny^{r-n(p+1)})$ because ${{r'}\choose p-1}\equiv 0\bmod p$ as $1\leq r'-p\leq p-2$. For $r'-(p-1)\leq i\leq p-1$, the second map sends $\theta^nx^{r-n(p+1)-i}y^i$ to $\alpha_i\ x^{p-1-i}y^{p-1-r'+i}$ where $\alpha_i:= (-1)^{r'-i}{{2(p-1)-r'}\choose p-1-r'+i}\not\equiv 0\bmod p$ because $0\leq 2(p-1)-r'\leq p-3$ and $0\leq p-1-r'+i\leq 2(p-1)-r'$.\\

\subsection{Theorem of Berger and a crucial lemma} 
\begin{theorem}[Berger \cite{Berger12}, \cite{Berger}]{\label{Berger}}
 Suppose $a_p\not= 0$ with $\nu (a_{p}) > 0$ and $k>3\nu(a_p)+\frac{(k-1)p}{(p-1)^2}+1$ then there exist $m = m(k,a_p)$ such that $\bar{V}_{k',a_p}\cong \bar{V}_{k,a_p}$ if $k'-k\in p^{m-1}(p-1)\mathbb{Z}_{\geq 0}$. 
\end{theorem}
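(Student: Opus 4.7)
The plan is to follow Berger's Wach-module strategy underlying the cited papers. Attach to $V_{k,a_p}$ its canonical Wach module $N(V_{k,a_p})$ over $\mathbf{A}^{+}_{\mathbb{Q}_p}:=\mathbb{Z}_p[[\pi]]$: a free $(\varphi,\Gamma)$-stable $\mathbf{A}^{+}_{\mathbb{Q}_p}$-submodule of rank $2$ inside the $(\varphi,\Gamma)$-module of $V_{k,a_p}$, of height $h=k-1$ (so that $\varphi^{*}(N)/N$ is killed by $q^{k-1}$, with $q=\varphi(\pi)/\pi$). Since Wach modules classify crystalline representations of $\gp$, and the Fontaine--Wintenberger equivalence recovers $\bar{V}_{k,a_p}^{ss}$ from the reduction $N(V_{k,a_p})\otimes\mathbb{F}_p$ viewed as an \'etale mod $p$ $(\varphi,\Gamma)$-module, it is enough to show $N(V_{k,a_p})\otimes\mathbb{F}_p \cong N(V_{k',a_p})\otimes\mathbb{F}_p$ whenever $k'$ is $p$-adically close enough to $k$.

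Using the explicit filtered $\varphi$-module $D_{k,a_p}$ from \cite{BLZ} (with Frobenius polynomial $X^{2}-a_pX+p^{k-1}$ and Hodge--Tate weights $(0,k-1)$), I would write down an $\mathbf{A}^{+}_{\mathbb{Q}_p}$-basis of $N(V_{k,a_p})$ in which the matrix of $\varphi$ has the shape $\begin{pmatrix} a(\pi) & q^{k-1} \\ c(\pi) & d(\pi) \end{pmatrix}$, with $a,c,d\in\mathbf{A}^{+}_{\mathbb{Q}_p}[1/p]$ forced by the filtration on $D_{k,a_p}$ and by compatibility with the $\Gamma$-action. The $\Gamma$-action determines the power series $a,c,d$ recursively, and their coefficients depend on $k$ only through the values $\chi(\gamma)^{i}$ of the cyclotomic character for $0\leq i\leq k-1$, together with the constants $a_p$ and $p^{k-1}$. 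The necessary congruences then reduce to two standard facts: for $k'-k\in p^{m-1}(p-1)\mathbb{Z}_{\geq 0}$ one has $\chi(\gamma)^{k'}\equiv\chi(\gamma)^{k}\pmod{p^{m}}$, and $q^{k'-1}\equiv q^{k-1}\pmod{p^{m}}$ in $\mathbf{A}^{+}_{\mathbb{Q}_p}$.

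The main obstacle is the precise bookkeeping of the denominators introduced when propagating these two congruences through the recursion for $a,c,d$ and then inverting the change-of-basis matrix between $N(V_{k,a_p})$ and $N(V_{k',a_p})$; the extra factors of $q$ one has to clear translate, after specialization, into $p$-adic denominators controlled by the slope $\nu(a_p)$ and by $(k-1)/(p-1)$, which is exactly why the terms $3\nu(a_p)$ and $(k-1)p/(p-1)^{2}$ appear in the hypothesis $k>3\nu(a_p)+\frac{(k-1)p}{(p-1)^{2}}+1$. Once this inequality ensures that the Wach module lies in the integral range where Berger's estimates apply, I would take $m=m(k,a_p)$ large enough to absorb all such denominators; the resulting isomorphism $N(V_{k,a_p})/p\cong N(V_{k',a_p})/p$ of \'etale $(\varphi,\Gamma)$-modules then yields $\bar{V}_{k,a_p}\cong\bar{V}_{k',a_p}$ via Fontaine--Wintenberger.
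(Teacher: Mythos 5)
First, a point of comparison: the paper does not prove Theorem \ref{Berger} at all; it is quoted verbatim from Berger's work (\cite{Berger12}, with the erratum \cite{Berger}), so there is no internal proof to measure you against. Judged against Berger's actual argument, your proposal correctly identifies the overall strategy: Wach modules $N(V_{k,a_p})$ of height $k-1$, an explicit basis in which $\mathrm{Mat}(\varphi)$ involves $q^{k-1}$, a congruence between the two Wach modules strong enough to identify the mod $p$ \'etale $(\varphi,\Gamma)$-modules, and hence the reductions via Fontaine's equivalence. Your heuristic for the constant is also in the right spirit: $\frac{(k-1)p}{(p-1)^2}=(k-1)\sum_{n\geq 1}\nu(\zeta_{p^n}-1)$ is the loss incurred in passing between the Wach module and $D_{\mathrm{cris}}$, and $\chi(\gamma)^{k'-k}\equiv 1 \bmod p^{m}$ is correct since $(p-1)p^{m-1}\mid k'-k$.

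There is, however, a genuine gap at the step you call a ``standard fact'': the congruence $q^{k'-1}\equiv q^{k-1}\pmod{p^{m}}$ in $\mathbf{A}^{+}_{\mathbb{Q}_p}$ is false. Since $q=((1+\pi)^p-1)/\pi\equiv\pi^{p-1}\pmod{p}$, one has $q^{k'-1}\equiv\pi^{(p-1)(k'-1)}$ and $q^{k-1}\equiv\pi^{(p-1)(k-1)}$ modulo $p$, which already differ whenever $k'\neq k$. This is not a reparable slip but precisely the central difficulty of local constancy in the \emph{weight} direction as opposed to the $a_p$-direction: $N(V_{k,a_p})$ and $N(V_{k',a_p})$ have different heights $k-1\neq k'-1$, so their $\varphi$-matrices cannot be congruent entry by entry, and likewise $p^{k-1}$ and $p^{k'-1}$ do not even have the same valuation, so the claim that the coefficients ``depend on $k$ only through $\chi(\gamma)^i$, $a_p$ and $p^{k-1}$'' cannot be correct as stated. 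Berger's proof must instead compare $N(V_{k',a_p})$ with a $q$-power (equivalently, crystalline-character) twist of $N(V_{k,a_p})$ and control the discrepancy; the hypothesis $k>3\nu(a_p)+\frac{(k-1)p}{(p-1)^2}+1$ is exactly what bounds the denominators produced by that height-shifting comparison. As written, your argument never uses the hypothesis in an essential way beyond ``absorbing denominators,'' and a proof that did not genuinely need it would be false: the paper's introduction recalls that local constancy fails for $k=5$, $a_p=p^{3/2}$, $p\geq 7$, a case excluded precisely by Berger's inequality. To repair the proposal you would need to replace the false congruence on $q^{k-1}$ by the actual comparison of Wach modules of different heights.
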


For integers $0\leq m\leq s$ let us define polynomials $F_m$ in  $V_r$ as follows
$$F_m(x,y) := x^my^{r-m}-x^{r-s+m}y^{s-m}$$
where $r>s$ and $r\equiv s\bmod (p-1)$.
\begin{lemma}[Bhattacharya, Lemma 3.2, \cite{maam}]{\label{lm3.2}}
Let $r\equiv s\bmod (p-1)$, and $t = \nu(r-s)\geq 1$ and $1\leq m\leq p-1$.\begin{enumerate}
\item For $s\geq 2m$, the polynomial $F_m$ is divisible by $\theta^m$ but not by $\theta^{m+1}$.
\item For $s>2m$, the image of $F_m$ generates the subquotient $\frac{V^{(m)}_r}{V^{(m+1)}_r}$ as a $\glfp$-module.
\end{enumerate}
\end{lemma}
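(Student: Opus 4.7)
My plan is to exploit the explicit factorization
$$F_m(x,y) = x^m y^{s-m}\bigl(y^{r-s} - x^{r-s}\bigr)$$
together with the arithmetic of $r-s$. Since $r \equiv s \pmod{p-1}$ and $t = \nu(r-s) \geq 1$, write $r - s = p^t M(p-1)$ with $p \nmid M$. In characteristic $p$, Frobenius gives $y^{r-s} - x^{r-s} = (y^{M(p-1)} - x^{M(p-1)})^{p^t}$, and since $p \nmid M$ the factor $y^{p-1} - x^{p-1}$ divides $y^{M(p-1)} - x^{M(p-1)}$ exactly once. Hence
$$y^{r-s} - x^{r-s} = (y^{p-1} - x^{p-1})^{p^t}\, Q(x,y), \qquad (y^{p-1}-x^{p-1}) \nmid Q.$$

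For part (1), since $m \leq p-1 < p^t$, we may extract $(y^{p-1} - x^{p-1})^m$ from the right-hand factor. Combining this with $x^m y^{s-m}$ and the hypothesis $s \geq 2m$, and using $\theta = -xy(y^{p-1} - x^{p-1})$, we obtain
$$F_m = (-1)^m\, \theta^m \cdot G_m, \qquad G_m := y^{s-2m}(y^{p-1} - x^{p-1})^{p^t - m} Q(x,y),$$
so $\theta^m \mid F_m$. For sharpness I verify $\theta \nmid G_m$: substituting $x = 0$ and using $Q(0,y) = y^{(p-1)p^t(M-1)}$, a direct computation yields $G_m(0,y) = y^{r-m(p+1)} \neq 0$, so $x \nmid G_m$ and hence $\theta \nmid G_m$.

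For part (2), with $s > 2m$, the class of $F_m$ in $V_r^{(m)}/V_r^{(m+1)} \cong (V_n/V_n^{(1)}) \otimes D^m$, where $n = r - m(p+1)$, corresponds to $G_m$. Since $V_n/V_n^{(1)}$ has at most two Jordan--Holder factors via the exact sequences (\ref{r' not p}) or (\ref{r' = p}), with irreducible quotient $W$, it suffices to prove that $G_m$ has nonzero image in $W$. The explicit projection sends the class of $x^{n-i} y^i$ to $\alpha_i\, x^{p-1-i} y^{p-1-r'+i}$ for $i \in [r'-(p-1),\, p-1]$ with $\alpha_i \not\equiv 0 \pmod p$, so the task reduces to exhibiting such an $i$ for which the coefficient of $x^{n-i} y^i$ in $G_m$ is a unit modulo $p$. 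Expanding $(y^{p-1}-x^{p-1})^{p^t-m}$ via the binomial theorem and applying Lucas' theorem to control the resulting coefficients, while tracking the contributions of $y^{s-2m}$ and $Q(x,y)$, one locates an index $i$ in the allowed range whose coefficient is nonzero modulo $p$.

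The main difficulty lies in part (2): the allowed range $[r'-(p-1), p-1]$ is narrow, and one must carefully match exponents across the three factors of $G_m$ while ensuring at least one binomial coefficient survives modulo $p$. A slicker alternative is to act on $F_m$ by a suitable unipotent $u \in \glfp$ so that the translated class in $V_r^{(m)}/V_r^{(m+1)}$ reduces to a single basis monomial whose index lies in the allowed range, thereby bypassing the explicit coefficient expansion entirely.
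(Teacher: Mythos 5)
The paper itself does not prove this lemma; it is imported verbatim from Lemma 3.2 of \cite{maam}, so your argument has to stand on its own. Part (1) does: the factorization $F_m=x^my^{s-m}(y^{r-s}-x^{r-s})$, the Frobenius identity $y^{r-s}-x^{r-s}=(y^{M(p-1)}-x^{M(p-1)})^{p^t}$, the fact that $y^{p-1}-x^{p-1}$ divides $y^{M(p-1)}-x^{M(p-1)}$ exactly once because $p\nmid M$, and the check $G_m(0,y)=y^{r-m(p+1)}\neq 0$ together show that the linear factor $x$ occurs in $F_m$ with multiplicity exactly $m$ while every other linear factor of $\theta$ occurs at least $m$ times (using $s\geq 2m$ and $m\leq p-1<p^t$). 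That correctly gives both divisibility claims.

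Part (2), however, is a plan rather than a proof, and it has two genuine gaps. First, the reduction ``nonzero image in the irreducible quotient $W$ implies $G_m$ generates $V_n/V_n^{(1)}$'' is simply false for a split extension: in $U\oplus W$ an element of the form $(0,w)$ has nonzero image in $W$ but generates only a copy of $W$. You need the non-splitness of the extensions (\ref{r' = p}) and (\ref{r' not p}) (a fact due to Glover), and you never invoke it. Second, and more seriously, the decisive computation is never performed: you do not exhibit the index $i$ in $[r'-(p-1),\,p-1]$, nor verify that the relevant coefficient is a unit mod $p$ --- the sentence ``one locates an index $i$ \dots'' is precisely the content to be proved, and the ``slicker alternative'' via a unipotent translate is likewise only a suggestion. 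Moreover the criterion as you state it is not quite the right one: since $x^{n-i}y^{i}\equiv x^{n-i'}y^{i'}\bmod V_n^{(1)}$ whenever $i\equiv i'\bmod (p-1)$ and $1\leq i,i'\leq n-1$, the image of $G_m$ in $W$ is controlled by the sums of its coefficients over congruence classes of exponents mod $p-1$, not by individual monomial coefficients of $G_m$; a single unit coefficient in the allowed range can be cancelled by other monomials in the same class. As it stands, part (2) is unproven.
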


\subsection{Notations and Conventions} 
We fix the following conventions in the rest of this article unless stated otherwise:
\begin{enumerate}
\item The integer $p$ always denotes a prime number greater than equal to $7$.  The integers $b$ and $c$ are from $\{2, 3, ...,p\}$ and $\{0,1, ..., p-2\}$ respectively.
\item We define $\epsilon$ as follows 
\begin{eqnarray}{\label{dfn epsilon}}
 \epsilon =\begin{cases} 
 0 & \text{if}\quad 2c-1\leq b\leq p\\
 1 & \text{if} \quad 2(c-1)-p\leq b\leq 2(c-1)\\
 2 & \text{if}\quad 2\leq b\leq 2(c-1)-(p+1).
\end{cases}
\end{eqnarray}
\item We write $s = b+c(p-1)$ and $r = s+p^t(p-1)d$ with $p\nmid d$,  and $t,d\in\mathbb{N}$ and so $s < r$.
\item For $n\in\mathbb{Z}^{\geq 0}$ and $k\in\mathbb{Z}$,  we define ${n\choose k} = 0$ if $k > n$ or $k < 0$ and the usual binomial coefficient otherwise.
\item For $A\equiv B$, where $A,B\in \text{M}_n(\mathbb{Z}_p)$ we mean that $A\equiv B\bmod p$.
\item Unless stated otherwise,  for $A, B\in\indkg(\symqp)$, by $A\equiv B$ or $A\equiv B\bmod p$ we mean that $A- B$ is in $\textbf{m}_{\bar{\mathbb{Z}}_p}\indkg(\symzp)$.
\item By the vectors $\{\textbf{e}_j\}$ we mean the standard basis of a free module over $\mathbb{Z}_p$. 
\item For $v\in \text{Sym}^r(\bar{\mathbb{F}}^2_p)$, by $v\in\kerp$ we mean $[id,\ v]\in\kerp$.
\end{enumerate}

\section{Some Binomial Identities }
%We refer to the Supplementary Material for the proofs of Lemmas \ref{cmbi 4} - \ref{invmt 1} below.
\begin{lemma}{\label{cmbi 4}}
Let $c,m,b,k\in\N\cup\{0\}$ and $m\leq b-c,\ k\geq 1$ then
$$\sum_{0\leq i\leq k}(-1)^i{{b-m-c+1}\choose i}{{b-m-c+k-i}\choose b-m-c} = 0$$ 
$$\text{and}\quad \sum_{0\leq l\leq c}(-1)^{c-l}{{b-m-c+1}\choose {b-m-c-l}}{{b-m-l}\choose {c-l}} = (-1)^c{{b-m+1}\choose b-m-c}. $$
\end{lemma}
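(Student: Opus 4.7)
Set $N := b-m-c$; this is a non-negative integer by the hypothesis $m \leq b-c$, so every binomial coefficient occurring in both identities is well defined under the paper's conventions. My approach is to first establish the upper identity by a short generating-function computation, and then to deduce the lower identity from the upper one by a pair of index substitutions.

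For the upper identity, I would recognize its left-hand side as a coefficient extraction from the product of two familiar power series:
\begin{equation*}
\sum_{i\geq 0}(-1)^i\binom{N+1}{i}x^i = (1-x)^{N+1},
\qquad
\sum_{j\geq 0}\binom{N+j}{N}x^j = (1-x)^{-(N+1)}.
\end{equation*}
The Cauchy product of these two series is identically equal to $1$, so the coefficient of $x^k$ in that product vanishes for every $k \geq 1$; this coefficient is precisely the left-hand side of the upper identity, which completes the first step. (Equivalently, this is Chu--Vandermonde with a negative upper parameter.)

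For the lower identity, I plan first to rewrite the summand in terms of $N$ using $b-m-c-l = N-l$ and $b-m-l = N+c-l$. Substituting $j := c-l$, then invoking the symmetry $\binom{N+1}{N-c+j} = \binom{N+1}{c+1-j}$ (which holds for all integer arguments under the paper's convention, with both sides being zero when out of range), and finally setting $i := c+1-j$, the left-hand side is transformed, after tracking the signs carefully, into
\begin{equation*}
(-1)^{c+1}\sum_{i=1}^{c+1}(-1)^i\binom{N+1}{i}\binom{N+c+1-i}{N}.
\end{equation*}
Applying the upper identity with $k = c+1$ shows that the corresponding sum over $0 \leq i \leq c+1$ vanishes; isolating the $i=0$ contribution, which equals $\binom{N+c+1}{N}$, forces the displayed truncated sum to be $-\binom{N+c+1}{N}$. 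Multiplying by the prefactor $(-1)^{c+1}$ and rewriting $\binom{N+c+1}{N}$ as $\binom{b-m+1}{b-m-c}$ yields the desired right-hand side.

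The arithmetic is otherwise elementary; the only mild obstacle I anticipate is the sign and index bookkeeping through the two nested substitutions used to reduce the lower identity to the upper one. Once the symmetry on $\binom{N+1}{\,\cdot\,}$ is invoked at the correct step, the reduction falls out at once.
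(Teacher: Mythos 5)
Your proposal is correct and follows essentially the same route as the paper: the second identity is reduced, via the same index shift, to the first identity with $k=c+1$, and the first identity is a coefficient-of-$x^k$ extraction from a polynomial/series identity. The only cosmetic difference is that you obtain the first identity from the Cauchy product $(1-x)^{N+1}(1-x)^{-(N+1)}=1$, whereas the paper differentiates $(x-1)^{N+1}x^{k-1}$ $(b-m-c)$ times at $x=1$; both are the same elementary mechanism.
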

\begin{proof}
Consider the following 
$$(x-1)^{b-m-c+1}x^{k-1} = \sum_{0\leq i\leq b-m-c+1}(-1)^i{{b-m-c+1}\choose i}x^{b-m-c+k-i}.$$
On differentiate with respect to $x,(b-m-c)$ time, putting $x = 1$ and dividing by $(b-m-c)!$,  gives
$$\sum_{0\leq i\leq b-m-c+1}(-1)^i{{b-m-c+1}\choose i}{{b-m-c+k-i}\choose b-m-c} = 0.$$
Observe $b-m-c+k-i\geq 0 \ \ \forall \ \ i$, and if $k< b-m-c+1$ then ${{b-m-c+k-i}\choose b-m-c} = 0 \ \ \forall  \ \ i\geq k+1$ and if $k>b-m-c+1$ then ${{b-m-c+1}\choose i} = 0 \ \ \forall\ \  i>b-m-c+1$. Therefore above summation runs over $0$ to $k$ so first part is done.\\
Now for the second part, we put $l = i-1$, and so we need to prove the following 
\begin{align*}
& &\sum_{1\leq i\leq c+1} (-1)^{c+1-i}{{b-m-c+1}\choose i}{{b-m+1-i}\choose b-m-c} &=& (-1)^c{{b-m+1}\choose b-m-c} \hspace{8em}\\
& \iff & \ \ \ \ \sum_{0\leq i\leq c+1} (-1)^{c+1-i}{{b-m-c+1}\choose i}{{b-m+1-i}\choose b-m-c} &=& 0 \hspace{14em}\\
& \iff & \ \ \ \ \sum_{0\leq i\leq c+1} (-1)^i{{b-m-c+1}\choose i}{{b-m+1-i}\choose b-m-c} &=& 0 \hspace{14em}
\end{align*}
which is part one of this Lemma for $k = c+1$.
\end{proof}

\begin{lemma}{\label{cmbi 1}} For every $j,m\in \N$ we have 
$$\sum_{1\leq i\leq j}(-1)^{i+1}{{m+1}\choose i}{{m+j-i}\choose j-i} = {{m+j}\choose j}.$$ 
\end{lemma}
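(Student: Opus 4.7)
My plan is to reduce the identity directly to the first vanishing identity in Lemma \ref{cmbi 4}, which has already been established via differentiation. The key observation is that the sum in Lemma \ref{cmbi 1} is missing only the $i=0$ term, which would contribute $(-1)^{0+1}\binom{m+1}{0}\binom{m+j}{j} = -\binom{m+j}{j}$. Thus the asserted identity is equivalent to the cleaner vanishing statement
\[
\sum_{0 \leq i \leq j}(-1)^{i}\binom{m+1}{i}\binom{m+j-i}{j-i} = 0, \qquad j \geq 1,
\]
obtained by absorbing the right-hand side $\binom{m+j}{j}$ back into the sum as the $i=0$ contribution (with the sign flipping from $(-1)^{i+1}$ to $(-1)^i$).

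Next, I would invoke the symmetry $\binom{m+j-i}{j-i} = \binom{m+j-i}{m}$, valid since $0 \leq j-i \leq m+j-i$. The reformulated identity then reads
\[
\sum_{0 \leq i \leq j}(-1)^{i}\binom{m+1}{i}\binom{m+j-i}{m} = 0,
\]
which is exactly the first identity of Lemma \ref{cmbi 4} specialized to $b = m$, $c = 0$, (and the $m$ in that lemma set to $0$), $k = j$. The hypotheses $m \leq b-c$ and $k \geq 1$ translate to $0 \leq m$ and $j \geq 1$, both of which hold. Applying Lemma \ref{cmbi 4}(1) then yields the desired vanishing and thus Lemma \ref{cmbi 1}.

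Since the argument is essentially bookkeeping, I do not anticipate a real obstacle; the only point requiring care is tracking the sign when isolating the $i=0$ term, so that the reformulation lands in the form covered by Lemma \ref{cmbi 4}. For completeness, a self-contained alternative is the generating-function proof: multiplying $(1-x)^{m+1} = \sum_i(-1)^i\binom{m+1}{i}x^i$ by $(1-x)^{-(m+1)} = \sum_k\binom{m+k}{k}x^k$ gives the constant series $1$, and extracting the coefficient of $x^j$ for $j \geq 1$ reproduces precisely the reformulated vanishing statement above.
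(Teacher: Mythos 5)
Your proposal is correct, and it takes a genuinely different (and shorter) route than the paper. The paper proves Lemma \ref{cmbi 1} by induction on $j$: after a fairly involved manipulation of the inductive step it reduces to the vanishing of $\sum_{0\leq i\leq k}(-1)^i{m\choose i}{{m+k-1-i}\choose m-1}$, which it then establishes by the same differentiate-$(x-1)^{N}x^{k-1}$-and-set-$x=1$ device used in Lemma \ref{cmbi 4}. You instead absorb the right-hand side ${{m+j}\choose j}$ as the missing $i=0$ term (with the correct sign flip from $(-1)^{i+1}$ to $(-1)^i$), rewrite ${{m+j-i}\choose j-i}={{m+j-i}\choose m}$, and observe that the resulting vanishing statement is exactly the first identity of Lemma \ref{cmbi 4} with $b-m-c$ there equal to the present $m$ and $k=j$; the hypotheses $m\leq b-c$ and $k\geq 1$ are satisfied, and there is no circularity since Lemma \ref{cmbi 4} is proved independently and earlier. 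Your approach buys a one-step reduction that eliminates the induction entirely, and your alternative via $(1-x)^{m+1}\cdot(1-x)^{-(m+1)}=1$ is an equally valid self-contained check; the paper's inductive argument buys nothing extra here beyond being self-contained within that single proof.
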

\begin{proof}
We prove Lemma by induction on $j$. For $j = 1$ result follows trivially. By induction assume result is true for $1\leq j\leq k$ and need to prove $j= k+1 $.
Now \\
\begin{eqnarray*}
{{m+k+1}\choose k+1}&=&\frac{(m+k+1)}{k+1}{{m+k}\choose k}\\
&=&\frac{(m+k+1)}{k+1}\sum_{1\leq i\leq k}(-1)^{i+1}{{m+1}\choose i}{{m+k-i}\choose k-i}\\
&=& \sum_{1\leq i\leq k}(-1)^{i+1}{{m+1}\choose i}\left(\frac{(m+k+1-i)}{k+1}+\frac{i}{k+1}\right){{m+k-i}\choose k-i}\\
&=& \sum_{1\leq i\leq k}(-1)^{i+1}{{m+1}\choose i}\left(\frac{(k+1-i)}{k+1}{{m+k+1-i}\choose k+1-i}+\frac{i}{k+1}{{m+k-i}\choose k-i}\right)\\
&=& \sum_{1\leq i\leq k}(-1)^{i+1}{{m+1}\choose i}{{m+k+1-i}\choose k+1-i}-\sum_{1\leq i\leq k}(-1)^{i+1}\frac{i}{k+1}{{m+1}\choose i}{{m+k-i}\choose k+1-i}.
\end{eqnarray*}
So to prove our result we need to prove following
\begin{eqnarray*}
& &-(-1)^k{{m+1}\choose k+1}-\sum_{1\leq i\leq k}(-1)^{i+1}\frac{i}{k+1}{{m+1}\choose i}{{m+k-i}\choose k+1-i} = 0\\
&\iff & \sum_{1\leq i\leq k}(-1)^{i+1}{m\choose i-1}{{m+k-i}\choose k+1-i}+(-1)^k{m\choose k} = 0\\
&\iff & \sum_{0\leq i\leq k-1}(-1)^i{m\choose i}{{m+k-1-i}\choose k-i}+(-1)^k{m\choose k}= 0\quad\text{by replacing $i-1$ by $i$ }\\
&\iff & \sum_{0\leq i\leq k}(-1)^i{m\choose i}{{m+k-1-i}\choose m-1}= 0.
\end{eqnarray*}
Now we consider  the following 
\begin{eqnarray*}
(x-1)^m x^{k-1} = \sum_{0\leq i\leq m}(-1)^i{m\choose i}x^{m+k-1-i}
\end{eqnarray*}
differentiate with respect to $x,(m-1)$ time, divide by $(m-1)!$ and putt $x=1$
\begin{eqnarray*}
\sum_{0\leq i\leq m}(-1)^i{m\choose i}{{m+k-1-i}\choose m-1}= 0.
\end{eqnarray*}
If $k\leq m,\ m-1+k-i<m-1 \ \ \forall \ \ i\geq k+1\ \Rightarrow {{m+k-1-i}\choose m-1} = 0$. If $k>m$ then for $m+1\leq i\leq k\Rightarrow {m\choose i} = 0$. So in all the cases we got our result.
\end{proof}

%\section{Some Combinatorial  Lemmas}
Suppose $r\equiv s\bmod p^t(p-1)$ for some $s = b+c(p-1), \ t:=\nu(r-s)>0$.  And for $0\leq i\leq s-l,\linebreak \ \ 0\leq m\leq p-1,\ 0\leq l\leq p-1$ define 
 \begin{equation}{\label{dfnsrjm}}
 S_{r,i,l,m} :=\underset{\substack{s-m\leq j<r-m\\j\equiv (r-m)\bmod (p-1)}}\sum{{r-l}\choose j}{j\choose i}.
 \end{equation} 
 
\begin{lemma}{\label{srjm}}
 Let $r = s+dp^t(p-1)$ with $p \not|d$ for some $s = b+c(p-1),2\leq b\leq p$
for $0\leq c\leq p-1$.  Let $0\leq l\leq p-1$ and $0\leq m\leq p-1$ such that $s-l\geq 0$ and $s-m\geq 0$.  Then for $0\leq i\leq s-l$ we have 
$$S_{r,i,l,m}\equiv \begin{cases}
                                \underset{\substack{i\leq j<s-m}}\sum {{r-l}\choose i}\left({{s-l-i}\choose j-i}-{{r-l-i}\choose j-i}\right)\bmod p^t\quad \text{if}\quad i<s-m,\ 0\leq l\leq c\\
                                0 \bmod p^t\quad\text{if} \quad i = s-m,\ l\leq m\\
                                -{{r-l}\choose r-m}{{r-m}\choose i}\bmod p^t\quad\text{if}\quad i>s-m,\ l\leq m.
                                  \end{cases}$$
Further assume $0\leq i\leq \text{min}\{s-l,\ s-m\}$ (so that we are always in first two case) then we have 
\[
S_{r,i,l,m}\equiv \begin{cases}
                         0\bmod p^t \  & \ \text{if} \ \ c=0\\
                         0\bmod p^{t-(c-1)} \ & \ \text{if} \ \ c\geq 1 \ \& \ 2\leq b\leq p-1\\
                         0\bmod p^{t-(c-1)} \ & \ \text{if}\ \ c+m\geq 2,\ c\geq 1 \ \& \ \ b=p\\
                         0\bmod p^{t-c} \ \ & \ \text{if}\ \ c+m<2\ ,c\geq 1\ \&\ \ b = p.                         
                         \end{cases}
\]
\end{lemma}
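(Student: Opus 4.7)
\textbf{Proof proposal for Lemma \ref{srjm}.} The plan is to apply the Vandermonde identity $\binom{r-l}{j}\binom{j}{i} = \binom{r-l}{i}\binom{r-l-i}{j-i}$ to factor $\binom{r-l}{i}$ out of $S_{r,i,l,m}$, reducing the problem to analyzing the partial arithmetic-progression sum
$$T := \sum_{u=0}^{dp^{t} - 1}\binom{r-l-i}{s-m-i + u(p-1)}.$$
Extending this to the full, finitely supported sum $\mathcal{F}$ over all $u \geq 0$, the boundary $\mathcal{B} := \mathcal{F} - T$ is identically zero when $l > m$, reduces to $\binom{r-l-i}{r-m-i}$ when $l \leq m$ with $m - l < p-1$, and gains one extra term in the corner case $l = 0$, $m = p-1$.

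The heart of the argument is to evaluate $\mathcal{F}$ modulo $p^t$ via the $(p-1)$-th roots-of-unity filter on the generating function $(1+x)^{r-l-i}$. Writing
$$\mathcal{F} = \frac{1}{p-1}\sum_{\xi^{p-1}=1}\xi^{-(s-m-i)}(1+\xi)^{r-l-i},$$
the key technical step is the Fermat-lifting congruence $(1+\xi)^{r-l-i} \equiv (1+\xi)^{s-l-i} \bmod p^{t+1}$ for every $(p-1)$-th root of unity $\xi \in \mathbb{Z}_p$: the case $\xi = -1$ is trivial, while for $\xi \neq -1$ one writes $(1+\xi)^{p-1} = 1 + pv$ with $v \in \mathbb{Z}_p$ and checks $(1+pv)^{dp^{t}} \equiv 1 \bmod p^{t+1}$ by Kummer/Legendre valuation of $\binom{dp^{t}}{k}$. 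Inverting the filter yields the master identity
$$\mathcal{F} \equiv \sum_{\substack{j' \geq 0 \\ j' \equiv s-m-i \bmod (p-1)}}\binom{s-l-i}{j'} \bmod p^{t}.$$

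The three cases of Part $1$ now fall out by rearrangement. When $i = s-m$, the right-hand AP-sum is supported on $j' \in \{0,\, p-1\}$, each contributing $1 \bmod p^t$, and these are cancelled exactly by $\mathcal{B}$, giving $T \equiv 0$. When $i > s-m$ with $l \leq m$, the AP-sum is empty because $s-l-i < 0$, so $T \equiv -\mathcal{B}$, reassembling to $-\binom{r-l}{r-m}\binom{r-m}{i}$ after multiplying by $\binom{r-l}{i}$ and re-applying Vandermonde. In the main case $i < s-m$, $0 \leq l \leq c$, I would split the AP-sum into indices $j' < s-m-i$ and $j' \geq s-m-i$, and use the same filter identity applied now to $\binom{r-l-i}{j'}$ to recombine the tail portion together with $\mathcal{B}$ into the complementary unrestricted partial sum for $\binom{r-l-i}{j'}$, producing the telescoping difference $\binom{s-l-i}{j-i} - \binom{r-l-i}{j-i}$ appearing in the statement.

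For Part $2$ with $0 \leq i \leq \min(s-l,\, s-m)$, we are in case $1$ or case $2$: case $2$ yields $S_{r,i,l,m} \equiv 0 \bmod p^t$ directly. In case $1$, each difference $\binom{s-l-i}{j'} - \binom{r-l-i}{j'}$ is divisible by $p^{t - \nu(j'!)}$ (the product-formula numerators of the two binomials agree modulo $p^t(p-1)$), and combined with the $p$-adic valuation of $\binom{r-l}{i}$ supplied by Lemma \ref{rk3.15} (at least $c - 1$ generically, and exactly $c$ in the $b = p$, $c+m < 2$ exceptional case) one reads off the stated divisibilities $p^t$, $p^{t-(c-1)}$, $p^{t-c}$. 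The main obstacle I anticipate is the bookkeeping in case $1$: carefully tracking how the tails of the two AP-sums together with the boundary $\mathcal{B}$ reorganize into the unrestricted partial sum in the statement without losing $p$-adic precision, while using the constraint $0 \leq l \leq c$ to keep all relevant indices inside the supports of the binomials.
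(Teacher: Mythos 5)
Your treatment of Part 1 is essentially the paper's own argument: Vandermonde to pull out $\binom{r-l}{i}$, the $(p-1)$-th roots-of-unity filter applied to $(1+x)^{r-l-i}$, the lifting congruence $(1+\zeta)^{r-s}\equiv 1 \bmod p^{t+1}$ for $\zeta\neq -1$ (which is exactly how the paper replaces the full AP-sum for $r$ by the one for $s$ modulo $p^{t}$), and then a boundary-term case analysis separating $l\le m$ from $l>m$, with the corner case $l=0$, $m=p-1$ contributing the extra term. One small slip: in the case $i>s-m$ the AP-sum for $\binom{s-l-i}{\,\cdot\,}$ is not empty because $s-l-i<0$ (we have $i\le s-l$, so $s-l-i\ge 0$); it is empty because the least nonnegative residue of $s-m-i$ modulo $p-1$, namely $p-1-(i-(s-m))$, exceeds $s-l-i$ unless $l+p-1-m=0$. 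This is fixable and you already flag the corner case, so I would not call it a gap.

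Part 2, however, contains a genuine error. You correctly note that each difference $\binom{s-l-i}{j-i}-\binom{r-l-i}{j-i}$ is divisible by $p^{t-\nu((j-i)!)}$, but you then claim the remaining precision comes from "the $p$-adic valuation of $\binom{r-l}{i}$ supplied by Lemma \ref{rk3.15} (at least $c-1$ generically)." This is wrong on two counts: Lemma \ref{rk3.15} only computes valuations of binomials of the shape $\binom{r-l}{r-m}$, $\binom{r-l}{b-m}$, etc., and those valuations are $0$ or $1$, never $c-1$; and in general $\binom{r-l}{i}$ is a $p$-adic unit, so it contributes nothing. The correct finishing step is an \emph{upper} bound on $\nu((j-i)!)$: since $j<s-m$ and $j\equiv s-m\bmod(p-1)$, one has $j-i\le j\le s-m-(p-1)= b+1-(c+m)+(c-1)p$, whence $\nu((j-i)!)\le c-1$ whenever $b\le p-1$ or $c+m\ge 2$, and $\nu((j-i)!)\le \nu((cp)!)\le c$ in the exceptional case $b=p$, $c=1$, $m=0$. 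This gives $t-\nu((j-i)!)\ge t-(c-1)$ (resp. $t-c$), which is the whole content of the second part; without this bound the stated congruences do not follow from what you have written.
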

\begin{proof}
Expend binomial expansion 
$$(1+x)^{r-l} = \sum_{0\leq j\leq r-l}{{r-l}\choose j}x^j$$
differentiating above with respect to $x, i^{th}$ time, dividing by $i!$ and multiply by $x^{i-(s-m)}$
\begin{align*}
{{r-l}\choose i}(1+x)^{r-l-i}x^{i-(s-m)} &=& \sum_{i\leq j\leq r-l}{{r-l}\choose j}{j\choose i}x^{j-(s-m)}\hspace{17em}\\
(1+x)^{r-l-i}x^{i-(s-m)} &=& \sum_{i\leq j\leq r-l}{{r-l-i}\choose j-i}x^{j-(s-m)}\hspace{17em}\\
\sum_{\zeta\in\mu_{p-1}}(1+\zeta)^{r-l-i}\zeta^{i-(s-m)} &=& \underset{\substack{i\leq j\leq r-l\\j\equiv (s-m)\bmod (p-1)}}\sum{{r-l-i}\choose j-i}(p-1).\hspace{15em}
\end{align*}
Similarly we have the following 
\begin{align*}
&\sum_{\zeta\in\mu_{p-1}}(1+\zeta)^{s-l-i}\zeta^{i-(s-m)} &=& \sum_{i\leq j\leq s-l,j\equiv (s-m)\bmod (p-1)}{{s-l-i}\choose j-i}(p-1).
\end{align*}
Note that for $\zeta\not= -1 $, $(1+\zeta)^{p-1} \equiv\ 1\bmod p \ \ \ \implies \ \ \ (1+\zeta)^{p-1}= 1+pz$ where $z\in\mathbb{Z}_p$. Therefore $(1+\zeta)^{(r-s)}\equiv\ 1\bmod p^{t+1}$. Hence we have \\
\begin{eqnarray*}
\sum_{\zeta\in\mu_{p-1}\setminus \{-1\}}(1+\zeta)^{s-l-i}\zeta^{i-(s-m)} \left((1+\zeta)^{r-s}-1\right)&\equiv & 0\bmod p^{t+1}\\
 \implies \underset{\substack{i\leq j\leq r-l\\j\equiv (s-m)\bmod (p-1)}}\sum{{r-l-i}\choose j-i}-\underset{\substack{i\leq j\leq s-l\\j\equiv (s-m)\bmod (p-1)}}\sum{{s-l-i}\choose j-i}&\equiv & 0\bmod p^{t+1}.\\
\end{eqnarray*}
\textbf{Claim:}
$S_{r,i,l,m}\equiv \begin{cases}
                                \underset{\substack{i\leq j<s-m\\ j\equiv (s-m)\bmod (p-1)}}\sum {{r-l}\choose i}\left({{r-l-i}\choose j-i}-{{s-l-i}\choose j-i}\right)\bmod p^t\quad \text{if}\quad i<s-m, \ 0\leq l\leq c\\
                                0 \bmod p^t\quad\text{if} \quad i = s-m,\ l\leq m\\
                                {{r-l}\choose r-m}{{r-m}\choose i}\bmod p^t\quad\text{if}\quad i>s-m,\ l\leq m.
                                  \end{cases}$

We will prove above claim in two cases, $l\leq m$ and $l>m$.\\
\textbf{Case (i)} $0\leq l\leq m$\\
Observe that $r-m+p-1-(r-l) = l+p-1-m\geq 0$ and  $s-m+p-1-(s-l) = l+p-1-m\geq 0$ this gives \\
 \begin{eqnarray*}
 \underset{\substack{r-m\leq j\leq r-l\\j\equiv (s-m)\bmod (p-1)}}\sum{{r-l-i}\choose j-i} &=& \begin{cases}
                        {{r-l-i}\choose r-m-i}+{{r-l-i}\choose r-m+p-1-i}\quad \text{if}\quad l+p-1-m =0\\
                         {{r-l-i}\choose r-m-i}\quad \text{if}\quad l+p-1-m >0
                         \end{cases}\\
                         &=& \begin{cases}
                        {{r-l-i}\choose r-m-i}+1\quad \text{if}\quad l+p-1-m =0\\
                         {{r-l-i}\choose r-m-i}\quad \text{if}\quad l+p-1-m >0
                         \end{cases}\\
\underset{\substack{s-m\leq j\leq s-l,\ i\leq j\\j\equiv (s-m)\bmod (p-1)}}\sum{{s-l-i}\choose j-i} &=& \begin{cases}
                        {{s-l-i}\choose s-m-i}+{{s-l-i}\choose s-m+p-1-i}\quad \text{if}\quad l+p-1-m =0,\ 0\leq i\leq s-m\\
                         {{s-l-i}\choose s-m-i}\quad \text{if}\quad l+p-1-m >0,\ 0\leq i\leq s-m\\
                        {{s-l-i}\choose s-m+p-1-i}\quad \text{if}\quad l+p-1-m =0,\ s-m< i\leq s-l\\
                        0 \quad \text{if}\quad l+p-1-m >0,\ s-m< i\leq s-l\\
                         \end{cases}\\
                &=& \begin{cases}
                        {{s-l-i}\choose s-m-i}+1\quad \text{if}\quad l+p-1-m =0,\ 0\leq i\leq s-m\\
                         {{s-l-i}\choose s-m-i}\quad \text{if}\quad l+p-1-m >0,\ 0\leq i\leq s-m\\
                       1  \quad \text{if}\quad l+p-1-m =0,\ s-m< i\leq s-l\\
                        0 \quad \text{if}\quad l+p-1-m >0,\ s-m< i\leq s-l.
                         \end{cases}
 \end{eqnarray*}
Now for $0\leq i\leq s-m$ observe that ${{r-l-i}\choose {r-m-i}}\equiv {{s-l-i}\choose s-m-i}\bmod p^t$.  Above computation implies that \\
$ \underset{\substack{r-m\leq j\leq r-l\\j\equiv (s-m)\bmod (p-1)}}\sum{{r-l-i}\choose j-i}-\underset{\substack{s-m\leq j\leq s-l\\j\equiv (s-m)\bmod (p-1)}}\sum{{s-l-i}\choose j-i}\equiv \begin{cases}
                                                   0\bmod p^t\quad\text{if}\quad 0\leq i\leq s-m\\
                                                   {{r-l-i}\choose r-m-i}\quad\text{if}\quad s-m<i\leq s-l.
                                                   \end{cases}$\\
Hence we have 
$$S_{r,i,l,m} \equiv\begin{cases}
                               {{r-l}\choose i}\underset{\substack{ i\leq j<s-m\\ j\equiv (s-m)\bmod (p-1)}}\sum\left({{s-l-i}\choose j-i}-{{r-l-i}\choose j-i}\right)\bmod p^t\quad\text{if}\quad i<s-m\\
                                0\bmod p^t\quad\text{if}\quad i= s-m\\
                               - {{r-l}\choose r-m}{{r-m}\choose i}\bmod p^{t+1}\quad\text{if}\quad s-m<i\leq s-l.
                                \end{cases}$$

\textbf{Case (ii)} $m<l\leq c$\\
 In this case $$\underset{\substack{r-l< j< r-m\\j\equiv (s-m)\bmod (p-1)}}\sum{{r-l-i}\choose j-i} = 0$$
 $$\underset{\substack{s-l< j<s-m,\\j\equiv (s-m)\bmod (p-1)}}\sum{{s-l-i}\choose j-i} = 0.$$
 Since summations are empty because $r-m-(p-1)-(r-l+1) = l-(p-1)-m-1<0$ and $s-m-(p-1)-(s-l-1) = l-(p-1)-m-1< 0$.
 $$S_{r,i,l,m} \equiv
                               {{r-l}\choose i}\underset{\substack{ i\leq j<s-m\\ j\equiv (s-m)\bmod (p-1)}}\sum\left({{s-l-i}\choose j-i}-{{r-l-i}\choose j-i}\right)\bmod p^{t+1}.$$
Hence we have proved our claim and so first part of our Lemma is done. \\
Now we will prove second part of our Lemma. \\
\textbf{Case (i)}. $c = 0$\\
For $0\leq i<s-m$,  we have $j<s-m \leq b-m\leq p$ this gives $j-i<p$ implies $\nu((j-i)!) = 0$ therefore ${{s-l-i}\choose j-i}-{{r-l-i}\choose j-i} = 0\bmod p^t$.  This gives our result for $0\leq i<s-m$ and for $i = s-m$ is true by part first.\\
\textbf{Case (ii)} $c\geq 1\ \& \ 0\leq i<s-m$\\
Note that $$\nu\left({{s-l-i}\choose j-i}-{{r-l-i}\choose j-i}\right)\geq t-\nu((j-i)!)$$
$$\& \ \ j-i\leq j\leq s-m-(p-1)\leq b+1-(c+m)+(c-1)p$$
here $c-1\leq p-1$ and $b-m-c+1\leq p-1$ if either $b\leq p-1$ or $c+m\geq 2$. So $\nu((j-i)!)\leq \nu((p-1+(c-1)p)!)\leq c-1\ \ \ \implies \ \ t-\nu((j-i)!)\geq t+1-c$. Therefore $S_{r,i,l,m}\equiv\ 0\bmod p^{t+1-c}$, in case either $2\leq b\leq p-1$ or $b =p, c+m\geq 2$.\\
Now if $b = p$ and $c+m<2$ as $c\geq 1$ then we have $c = 1\  \&\ m = 0$ so, 
$$j-i\leq 1-c-m+cp\leq cp\ \ \ \implies \ \ \ \nu((j-i)!)\leq \nu((cp)!)\leq c$$
$$\hspace{8em}\implies \ \ \ \ \ t-\nu((j-i)!)\geq t-c$$
$S_{r,i,l,m}\equiv\ 0\bmod p^{t-c}$, in case $b=p,c+m<2$.\\
For $i = s-m$,  we have $S_{r,i,l,m}\equiv 0\bmod p^t$ and so is zero mod $p^{t-c}$ or mod $p^{t-(c-1)}$ as $c\geq 1$. 
\end{proof}

\begin{lemma}{\label{coeff 5.1}}
Let $r=b+c(p-1)+p^t(p-1)d$ and suppose $d\geq 0,\ t\geq 2,\ 2\leq b\leq p,\  0\leq m\leq c-1\leq p-2$.  Then for $0\leq j,  l\leq c-1$ we have modulo $p$\\
${{r-l}\choose b-m+j(p-1)}\equiv \begin{cases}
                                                        {{b-c-l}\choose b-m-j}{c\choose j}&\text{if}\quad 0\leq j\leq b-m,\ 0\leq l \leq b-c\\
                                                        {{p+b-c-l}\choose b-m-j}{{c-1}\choose j}&\text{if}\quad 0\leq j\leq b-m,\ b-c+1\leq l \leq b-c+p\\
                                                         {{2p+b-c-l}\choose b-m-j}{{c-2}\choose j} &\text{if} \quad 0\leq j\leq b-m,\ b-c+p+1\leq l\leq b-c+2p\\
                                                        {{b-c-l}\choose p+b-m-j}{c\choose j-1}&\text{if}\quad b-m+1\leq j\leq b-m+p,\ 0\leq l \leq b-c\\
                                                       {{p+b-c-l}\choose p+b-m-j}{{c-1}\choose j-1} &\text{if}\quad b-m+1\leq j\leq b-m+p,\ b-c+1\leq l\leq b-c+p\\
                                                       {{2p+b-c-l}\choose p+b-m-j}{{c-2}\choose j-1} &\text{if} \quad b-m+1\leq j\leq b-m+p,\ b-c+p+1\leq l\leq b-c+2p\\
                                                        {{p+b-c-l}\choose 2p+b-m-j}{{c-1}\choose j-2} &\text{if} \quad b-m+p+1\leq j\leq b-m+2p,\ b-c+1\leq l\leq b-c+p\\
                                                         {{2p+b-c-l}\choose 2p+b-m-j}{{c-2}\choose j-2} &\text{if} \quad b-m+p+1\leq j\leq b-m+2p,\ b-c+p+1\leq l\leq b-c+2p.
                                                       \end{cases}$
\end{lemma}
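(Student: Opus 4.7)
The strategy is a direct application of Lucas' theorem on the base-$p$ expansions of the numerator $r-l$ and the denominator $b-m+j(p-1)$. First I would write
\[
r-l = (b-c-l) + cp + p^{t}(p-1)d.
\]
Since $t\ge 2$, the last summand is divisible by $p^{2}$ and contributes only at base-$p$ positions $\ge 2$. After carrying to keep the ones-digit in $[0,p-1]$, the digits of $r-l$ at positions $0$ and $1$ fall into three cases: $(b-c-l,\,c)$ when $0\le l\le b-c$; $(p+b-c-l,\,c-1)$ when $b-c+1\le l\le b-c+p$; and $(2p+b-c-l,\,c-2)$ when $b-c+p+1\le l\le b-c+2p$, the last being nonempty only if $c\ge 2$.

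Next I would similarly expand $b-m+j(p-1)=(b-m-j)+jp$, obtaining for the digit pair at positions $0$ and $1$: $(b-m-j,\,j)$ when $0\le j\le b-m$, $(p+b-m-j,\,j-1)$ when $b-m+1\le j\le b-m+p$, and $(2p+b-m-j,\,j-2)$ when $b-m+p+1\le j\le b-m+2p$. Lucas' theorem then gives
\[
\binom{r-l}{b-m+j(p-1)}\equiv \prod_{i\ge 0}\binom{d_i}{e_i}\pmod{p},
\]
where $d_i,e_i$ are the base-$p$ digits of $r-l$ and $b-m+j(p-1)$ respectively at position $i$. Substituting the three-by-three array of digit pairs at positions $0$ and $1$ into this product immediately yields the eight cases of the statement.

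The main obstacle is the bookkeeping. For each combination of $l$- and $j$-ranges one must verify that higher-position factors in Lucas' product do not spoil the identity: in the cases where the digit $j-1$ or $j-2$ at position $1$ of the denominator exceeds $p-1$ (forcing further carries that introduce extra positive digits at position $\ge 2$), the corresponding second factor in the stated formula vanishes by the convention $\binom{n}{k}=0$ for $k>n$, matching Lucas. The one combination not listed, $0\le l\le b-c$ with $b-m+p+1\le j\le b-m+2p$, is absent because in that regime $2p+b-m-j$ exceeds $b-c-l$, so the position-$0$ factor already vanishes and the binomial is zero modulo $p$.
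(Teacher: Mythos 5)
Your proposal is correct and takes exactly the paper's route: the paper's entire proof is the one sentence ``The proof is a straightforward application of Lucas' Theorem,'' and your argument is precisely that application with the base-$p$ digit bookkeeping for $r-l$ and $b-m+j(p-1)$ carried out explicitly (including the correct observation that the ninth range combination is absent because its position-$0$ Lucas factor vanishes).
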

\begin{proof}
The proof is a straightforward application of Lucas' Theorem (Theorem $2.4$, \cite{BG}).
\end{proof}

\begin{lemma}{\label{lmk68}}
Let $r = b+c(p-1)+p^t(p-1)d$ and suppose $d\geq 0,\ t\geq 2,\ 2\leq b\leq p, \ 1\leq c\leq p-2$. Also assume that $0\leq m\leq p-1$ and $(b, m)\not = (p,0)$.
\begin{enumerate}
\item If $0\leq m\leq l\leq b-c$ and $0\leq j\leq c-1$ then
$$\frac{{{r-l}\choose b-m+j(p-1)}}{p}\equiv (-1)^{l-m}\frac{{{b-m}\choose j}{{p-1+m-l}\choose c-1-j}}{{{b-m-c}\choose l-m}{{b-m}\choose c}}\bmod p.$$
\item If $b\leq m\leq l\leq p+b-c$ and $1\leq j\leq c-1$ then
$$\frac{{{r-l}\choose b-m+j(p-1)}}{p}\equiv (-1)^{l-m}\frac{{{p+b-m-1}\choose j-1}{{p-1+m-l}\choose c-1-j}}{{{p+b-m-c}\choose l-m}{{p+b-m-1}\choose c-1}}\bmod p.$$
\end{enumerate}
\end{lemma}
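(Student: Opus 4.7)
We compute $\tfrac{1}{p}\binom{r-l}{k}$ modulo $p$, where $k := b-m+j(p-1)$, by first computing $\binom{r-l}{k}$ modulo $p^2$. Since $t\geq 2$, we have $r \equiv s := b+c(p-1)\pmod{p^2}$, so $\binom{r-l}{k}\equiv\binom{s-l}{k}\pmod{p^2}$ and it suffices to compute the right side. Consider the base-$p$ expansions: in Part $(1)$ (where $m\leq l\leq b-c$), $s-l = (b-c-l) + cp$ and $k = (b-m-j) + jp$; in Part $(2)$ (where $b\leq m\leq l\leq p+b-c$ and $j\geq 1$), after a single borrow, $s-l = (p+b-c-l) + (c-1)p$ and $k = (p+b-m-j) + (j-1)p$. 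In both cases the four digits lie in $[0,p-1]$ (the exclusion $(b,m)\neq(p,0)$ in Part $(1)$ is exactly what keeps $b-m-j\leq p-1$). By Kummer's theorem, the addition $k + (s-l-k) = s-l$ has exactly one carry, at position $0$, because the low digits satisfy $k_0 > n_0$ (notation: $k_0,n_0$ for low digits of $k$, $s-l$), equivalently $m+j < c+l$, a consequence of $m\leq l$ and $j\leq c-1$. Hence $\nu_p\binom{r-l}{k}=1$ and $\tfrac{1}{p}\binom{r-l}{k}$ is well-defined modulo $p$.

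To extract the coefficient of $X^k$ in $(1+X)^{s-l}$ modulo $p^2$, I would use
\[
(1+X)^p \equiv 1 + X^p + pQ(X)\pmod{p^2}, \qquad Q(X):=\sum_{u=1}^{p-1}\frac{(-1)^{u-1}}{u}X^u,
\]
and write $s-l = n_0 + p\bar N$, where $\bar N \equiv c\pmod p$ in Part $(1)$ and $\bar N\equiv c-1\pmod p$ in Part $(2)$. This gives
\[
(1+X)^{s-l}\equiv (1+X)^{n_0}\left[(1+X^p)^{\bar N} + p\bar N\, Q(X)(1+X^p)^{\bar N-1}\right]\pmod{p^2}.
\]
Writing $\bar k$ for the higher digit of $k$, the Lucas contribution $\binom{n_0}{k_0}\binom{\bar N}{\bar k}$ vanishes since $k_0>n_0$. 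A second potential contribution in the $Q(X)$ term (requiring $u+v = k_0 + p$, $w = \bar k -1$) also vanishes because it forces $v\geq k_0+1 > n_0$. What survives is
\[
\tfrac{1}{p}\binom{r-l}{k} \equiv \bar N\,\binom{\bar N-1}{\bar k}\sum_{u=1}^{k_0}\frac{(-1)^{u-1}}{u}\binom{n_0}{k_0-u}\pmod p,
\]
and the alternating sum is evaluated by the classical identity
\[
\sum_{w=0}^{n_0}\binom{n_0}{w}\frac{(-1)^w}{\alpha+w} = \frac{n_0!(\alpha-1)!}{(\alpha+n_0)!}
\]
with $\alpha = k_0 - n_0 = c + l - m - j \geq 1$, giving a compact factorial expression.

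The final step is to match this factorial expression with the claimed formula. For this I would apply $\binom{p-1-a}{b}\equiv(-1)^b\binom{a+b}{b}\pmod p$ to $\binom{p-1+m-l}{c-1-j}$ (valid since $c-1-j+l-m\leq p$), then expand every remaining binomial coefficient on the target side as a ratio of factorials and cancel. The principal obstacle is the bookkeeping across the two cases: in Part $(1)$ the prefactor $\bar N\binom{\bar N-1}{\bar k}\equiv c\binom{c-1}{j}$ contributes $\tfrac{c!}{j!(c-1-j)!}$, while in Part $(2)$ it becomes $(c-1)\binom{c-2}{j-1}\equiv\tfrac{(c-1)!}{(j-1)!(c-1-j)!}$; this difference is precisely what converts the target combination $\binom{b-m}{j}/\binom{b-m}{c}$ of Part $(1)$ into $\binom{p+b-m-1}{j-1}/\binom{p+b-m-1}{c-1}$ of Part $(2)$, and explains why the two asserted formulas take their particular shapes.
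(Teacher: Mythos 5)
Your route is genuinely different from the paper's: the paper simply invokes the Kazandzidis congruence of \cite{k68}, $\binom{A}{B}\equiv(-p)^e\prod_i \frac{a_i!}{b_i!\,c_i!}\bmod p^{e+1}$, applied directly to $A=r-l$ and $B=b-m+j(p-1)$ (reading off the $p$-adic digits of $r-l$, including the $p^t(p-1)d$ tail), and then rearranges factorials. You instead reprove the needed special case of that congruence by hand via $(1+X)^p\equiv 1+X^p+pQ(X)\bmod p^2$ and the partial-fraction identity $\sum_w\binom{n_0}{w}\frac{(-1)^w}{\alpha+w}=\frac{n_0!(\alpha-1)!}{(\alpha+n_0)!}$. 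I checked that in Part (1) your surviving term $c\binom{c-1}{j}(-1)^{\alpha-1}\frac{n_0!(\alpha-1)!}{k_0!}$ with $\alpha=c+l-m-j$ does agree with the asserted right-hand side after applying $\binom{p-1-a}{b}\equiv(-1)^b\binom{a+b}{b}$, so the plan is workable and self-contained, at the cost of more computation.

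There is, however, one genuine gap at the very first step. From $r\equiv s\bmod p^2$ it does \emph{not} follow that $\binom{r-l}{k}\equiv\binom{s-l}{k}\bmod p^2$: binomial coefficients are not periodic mod $p^2$ in the top argument with period $p^2$ (e.g.\ $\binom{p^2}{p}\equiv p\not\equiv 0=\binom{0}{p}\bmod p^2$). The congruence you need is true in the present setting, but it requires an argument. Writing $\binom{r-l}{k}=\sum_{i\ge 0}\binom{p^t(p-1)d}{i}\binom{s-l}{k-i}$ by Vandermonde, the terms with $p\nmid i$ have $\nu\bigl(\binom{p^t(p-1)d}{i}\bigr)\ge t\ge 2$; the terms with $i=pu$, $p\nmid u$, have valuation $\ge t-1\ge 1$ and one must additionally check $\binom{s-l}{k-pu}\equiv 0\bmod p$, which follows from Lucas and the same inequality $k_0>n_0$ that produces your carry; and terms with $p^2\mid i$, $i>0$, are ruled out because $k=b-m+j(p-1)<p^2$. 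Without this (or an equivalent) verification, the reduction to $\binom{s-l}{k}$ is unjustified, and since everything downstream is computed for $s-l$ rather than $r-l$, the whole argument rests on it. With that paragraph added, and with the final factorial bookkeeping in Part (2) actually carried out, your proof goes through.
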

\begin{proof}
Let $A = \underset{0\leq i\leq n}\sum a_ip^i, \ B = \underset{0\leq i\leq n}\sum b_ip^i$ and $A-B = \underset{0\leq i\leq n}\sum c_ip^i$ are in $p$-adic expansion. If $p^e||{A\choose B}$ then by \cite{k68}
\begin{equation}{\label{eqn k68}}
{A\choose B}\equiv (-p)^e\underset{0\leq i\leq n}\Pi\frac{a_i}{b_ic_i}\bmod p^{e+1}.
\end{equation}
We will apply this result for $A = r-l$ and $B = b-m+j(p-1)$ in following cases.\\
\begin{enumerate}
\item In this case observe that following are in $p$-adic expansion
\begin{eqnarray*}
r-l &=& b-c-l+cp+p^t(p-1)d\\
b-m+j(p-1)&=& b-m-j+jp\\
r-l-(b-m+j(p-1)) &=& p-c+j+m-l+(c-j-1)p+p^t(p-1)d.
\end{eqnarray*}
This follows from $0\leq j\leq c\leq b-m\leq p-1$ as $(b,m)\not=(p, 0)$ (for second line) and $0\leq p-b+m+1\leq p-c+j+m-l\leq p-1$ (for last line). Here one proves that $e = 1$, and so by \ref{eqn k68} we have 
\begin{eqnarray*}
\frac{{{r-l}\choose b-m+j(p-1)}}{p}&\equiv &(-1)\frac{c!(b-c-l)!}{j!(b-m-j)!(p-c+j+m-l)!(c-1-j)!}\bmod p\\
&\equiv & (-1)^{l-m}\frac{{{b-m}\choose j}{{p-1+m-l}\choose c-1-j}}{{{b-m-c}\choose l-m}{{b-m}\choose c}}\bmod p.
\end{eqnarray*}
\item  In this case observe that following are in $p$-adic expansion
\begin{eqnarray*}
r-l &=& p+b-c-l+(c-1)p+p^t(p-1)d\\
b-m+j(p-1)&=& p+b-m-j+(j-1)p\\
r-l-(b-m+j(p-1)) &=& p-c+j+m-l+(c-j-1)p+p^t(p-1)d.
\end{eqnarray*}
This follows from $0\leq j\leq c\leq p+b-m$ (for second line) and $0\leq m+1-b\leq p-c+j+m-l\leq p-1$ (for last line). Here again we note that $e = 1$. Then by \ref{eqn k68} we have 
\begin{eqnarray*}
\frac{{{r-l}\choose b-m+j(p-1)}}{p}&\equiv &(-1)\frac{(c-1)!(p+b-c-l)!}{(j-1)!(p+b-m-j)!(p-c+j+m-l)!(c-1-j)!}\bmod p\\
&\equiv & (-1)^{l-m}\frac{{{p+b-m-1}\choose j-1}{{p-1+m-l}\choose c-1-j}}{{{p+b-m-c}\choose l-m}{{p+b-m-1}\choose c-1}}\bmod p.
\end{eqnarray*}
\end{enumerate}
\end{proof}

\begin{lemma}{\label{rk3.15}}
Let $r=s+p^t(p-1)d,\ t\geq 2, \ s = b+c(p-1)\geq m,\ 2\leq b\leq p,\  0\leq c,\  m\leq p-1$ and $0\leq l\leq m$ then $\nu\left({{r-l}\choose r-m}\right) = \nu\left({{r-l}\choose s-m}\right)$.  Furthermore,
\[ \nu\left({{r-l}\choose r-m}\right) =
\begin{cases}
0 & \ \text{if}\quad (b, c) =(p, 0), m=0\\
1  & \ \text{if}\quad (b, c) =(p, 0),l =0,\ m\not = 0\\
0 & \ \text{if}\quad (b, c) =(p, 0),l \not=0,\ m\not = 0\\
0  & \ \text{if}\quad 0\leq m\leq b-c,\ (b,c)\not=(p,0)\\
1  & \ \text{if} \quad b-c+1\leq m\leq b-c+p,\ 0\leq l\leq b-c \\
0 & \ \text{if}\quad b-c+1\leq m\leq b-c+p,\ b-c+1\leq l\leq b-c+p\\
1  & \ \text{if} \quad b-c+p+1\leq m\leq b-c+2p,\ b-c+1\leq l\leq b-c+p\\
0 & \ \text{if}\quad b-c+p+1\leq m\leq b-c+2p,\ b-c+p+1\leq l\leq b-c+2p.
\end{cases}
\] 
\textbf{(A)} Further we assume $0\leq b-m\leq c$, then we have 
\begin{eqnarray*}
 \nu\left({{r-l}\choose b-m}\right) &=&
 \begin{cases}
 0\quad \text{if}\quad 0\leq l\leq m-c\\
 1\quad\text{if}\quad m-c+1\leq l\leq b-c\\
 0\quad\text{if}\quad b-c+1\leq l\leq b-c+p
\end{cases}\\
\nu\left(\frac{p^{2m-b}{{r-l}\choose b-m}}{{{r-l}\choose r-m}}\right)&=&
\begin{cases}
2m-b\quad\text{if}\quad m = b-c,\ 0\leq l\leq m-c\\
2m-b-1 \quad\text{if}\quad m\geq b-c+1,\ 0\leq l\leq m-c\\
2m-b+1\quad\text{if}\quad m = b-c,\ m-c+1\leq l\leq b-c\\
2m-b\quad\text{if}\quad m\geq  b-c+1,\ m-c+1\leq l\leq b-c\\
2m-b\quad\text{if}\quad m\geq b-c+1, \ b-c+1 \leq l\leq b-c+p.
\end{cases}
\end{eqnarray*}
\textbf{(B)}  Further we assume $0\leq b-m+p-1\leq c$, $m<p-1$ and let $c_0 = \frac{p^{2m-b-(p-1)}{{r-l}\choose b-m+p-1}}{{{r-l}\choose r-m}}$ then\linebreak we have 
\begin{eqnarray*}
 \nu\left({{r-l}\choose b-m+p-1}\right) &=&
 \begin{cases}
 0\quad \text{if}\quad 0\leq l\leq m-c+1\\
 1\quad\text{if}\quad m-c+2\leq l\leq b-c+p\\
 0\quad\text{if}\quad b-c+p+1\leq l\leq b-c+2p
\end{cases}\\
\end{eqnarray*}  
$\nu(c_0)=
\begin{cases}
2m-b-(p-1)\ \text{if}\ b-c+p-1\leq m\leq b-c+p,\ 0\leq l\leq m-c+1\\
2m-b-(p-1)-1\ \text{if}\ b-c+p+1\leq m\leq b-c+2p,\ 0\leq l\leq m-c+1\\
2m-b-(p-1)+1\ \text{if}\ b-c+p-1\leq m\leq b-c+p,\ m-c+2\leq l\leq b-c+p\\
2m-b-(p-1)\ \text{if}\ b-c+p+1\leq m\leq b-c+2p,\ m-c+2\leq l\leq b-c+p\\
2m-b-(p-1)\ \text{if}\ b-c+p+1\leq m\leq b-c+2p, \ b-c+p+1 \leq l\leq b-c+2p.
\end{cases}$                                            
\end{lemma}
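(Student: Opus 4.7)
The plan is to reduce every valuation to Kummer's theorem (carries in base-$p$ addition). The setup that makes this tractable is the clean $p$-adic decomposition of $r-l = (s-l) + p^t(p-1)d$: since $s = b+c(p-1) \le p + (p-1)^2 < p^2 \le p^t$, the digits at positions $0,1$ of $r-l$ are those of $s-l$, positions $2,\dots,t-1$ are zero, and positions $\ge t$ are those of $p^t(p-1)d$, with no carry interaction between the two blocks.

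For the equality $\nu\binom{r-l}{r-m}=\nu\binom{r-l}{s-m}$, I use symmetry to rewrite $\binom{r-l}{r-m}=\binom{r-l}{m-l}$. Because $0 \le m-l \le p-1$, the addend $m-l$ occupies only position $0$, so the carries in $(r-m)+(m-l)=r-l$ are generated entirely by the two-digit addition $(s-m)+(m-l)=s-l$: the bound $s-l < p^2$ prevents any overflow into position $2$, and the high-order block of $p^t(p-1)d$ passes through unchanged. For $\binom{r-l}{s-m}$ I instead decompose $(r-l)-(s-m)=p^t(p-1)d+(m-l)$; again $s-m$ vanishes at positions $\ge t$ so the high block contributes no carries, and the carries all come from the same two-digit addition $(s-m)+(m-l)=s-l$. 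So the two counts agree.

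The main valuation table then reduces to counting carries in the two-digit sum $(s-m)+(m-l)=s-l$. The digits of $s$ are $(b-c,\,c)$ when $c\le b\le c+p-1$ and $(p+b-c,\,c-1)$ when $b<c$, and $s-m$ differs from $s$ by a borrow at position $1$ each time $m$ crosses one of the thresholds $b-c$, $b-c+p$, or $b-c+2p$; this matches exactly the case partition in the statement. Since $m-l\le p-1$ is a single digit, each case is settled by direct comparison of the position-$1$ digit of $s-m$ with that of $s-l$: agreement gives $0$ carries, differing by $1$ gives $1$ carry. The degenerate corner $(b,c)=(p,0)$ has $s=p$ with digits $(0,1)$, so subtracting $m\ge 1$ collapses the digit at position $1$; treating this separately yields the first three lines of the table.

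Parts (A) and (B) proceed identically, with $b-m$ (respectively $b-m+p-1$) playing the role of the single-digit addend: under the respective hypotheses each occupies only position $0$, so $\nu\binom{r-l}{b-m}$ is the number of carries in $(b-m)+[(s-l)-(b-m)]=s-l$, whose base-$p$ structure is fixed by the sign of $m-l-c$ and by whether $l\le b-c$. The ratios are then $(2m-b)+\nu\binom{r-l}{b-m}-\nu\binom{r-l}{r-m}$ (respectively with $2m-b-(p-1)$), which can be read off by subtracting the appropriate line of the main table. The principal obstacle is not conceptual but combinatorial: matching the statement's case partitions against the exact borrow/carry patterns without miscounting, especially in the corner $(b,c)=(p,0)$ and on the boundaries $l=0$, $m=b-c$, and $m\ge b-c+p-1$ where multiple borrows can interact.
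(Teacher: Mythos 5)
Your carry-counting via Kummer's theorem is the same computation as the paper's proof, which simply invokes Legendre's digit-sum formula $\nu(n!)=(n-\sum n_i)/(p-1)$ and the observation $\nu(n!)=n_1+\nu(m!)$; both reduce every entry of the table to comparing base-$p$ digits of $s-l$ and $s-m$ (equivalently, counting carries in $(s-m)+(m-l)=s-l$), with the block decomposition $r-l=(s-l)+p^t(p-1)d$ ensuring the high digits never interfere. The plan is correct and, if anything, more explicit than the paper's two-line justification; just note when executing the bookkeeping that the hypotheses of parts (A) and (B) force $m\leq b$ and $b<c$ respectively, which is what keeps the claimed case partitions consistent with the main table.
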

\begin{proof}
The proof is a straightforward application of the following observations. For $n \in \mathbb{N}$ with $p$-adic expansion $ n = \sum \limits_{i= 0}^{a} n_{i} p^{i}$ we have: $  \nu (n!) = ({n - \sum \limits_{i= 0}^{a} n_{i}}) / ({p-1})$ where $ 0 \leq n_i \leq p-1$. Therefore, $  \nu (n!) = n_{1} + \nu (m!)$ where $m = \sum \limits_{i= 2}^{a} n_{i} p^{i}$.
\end{proof}

\begin{lemma}{\label{invmt 2}}
Let $b,m,c\in\mathbb{N}\cup\{0\}$ such that $m\leq b-c$ then the matrix $B = (b_{j,i})_{\substack{0\leq j\leq c\\0\leq i\leq c}}$  is invertible mod $p$ where $b_{j,i} = {{b-m-c+1+i}\choose b-m-j}$.
\end{lemma}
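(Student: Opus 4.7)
The strategy is to evaluate $\det B$ in closed form as (up to sign) $\binom{b-m+1}{c+1}$ and then to invoke Lucas' theorem.  Set $N:=b-m$, so that $B_{j,i}=\binom{N-c+1+i}{N-j}$ for $0\le j,i\le c$.

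First I would apply the Vandermonde--Chu identity
\[
\binom{(N-c+1)+i}{N-j}\;=\;\sum_{k=0}^{i}\binom{N-c+1}{N-j-k}\binom{i}{k},
\]
which exhibits the factorisation $B=L\,U$, with $U_{k,i}=\binom{i}{k}$ upper unitriangular (hence $\det U=1$) and $L_{j,k}=\binom{N-c+1}{N-j-k}$ depending only on $j+k$, i.e.\ a Hankel matrix.  Thus $\det B=\det L$.

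Next I would establish
\[
\det L \;=\; (-1)^{c(c+1)/2}\binom{N+1}{c+1}
\]
by induction on $c$.  The base case $c=0$ gives $\det L=\binom{N+1}{N}=N+1=\binom{N+1}{1}$, and the cases $c=1,2$ can be verified by hand to pin down the sign.  For the inductive step the key input is Pascal's identity
\[
\binom{N-c+1}{N-j-k}+\binom{N-c+1}{N-j-k+1}\;=\;\binom{N-c+2}{N-j-k+1}.
\]
Performing the column operations $C_k\mapsto C_k+C_{k-1}$ for $k=c,c-1,\dots,1$ (executed right-to-left so that the operations do not chain) converts columns $1,\dots,c$ of $L$ into the corresponding columns of the $(c+1)\times c$ matrix obtained from the Hankel matrix $L_{c-1}$ (for parameter $c-1$) by appending one extra row at the bottom.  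Expanding the resulting matrix along its untouched first column, whose entries $\binom{N-c+1}{N-j}$ vanish for $j<c-1$ when $c\ge 2$, leaves only the contributions from $j=c-1,c$ and yields a relation of the shape
\[
\det L \;=\; (-1)^{c-1}M_{c-1}+(-1)^c(N-c+1)\det L_{c-1},
\]
where $M_{c-1}$ is a $c\times c$ minor obtained from $L_{c-1}$ by a Hankel shift of its last row.  A second round of Pascal reduction applied to $M_{c-1}$ combined with the inductive hypothesis on $\det L_{c-1}=(-1)^{(c-1)c/2}\binom{N+1}{c}$ then produces the claimed closed form after an elementary simplification, and the sign $(-1)^{c(c+1)/2}$ is tracked through the cofactor expansion.

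Combining the two steps, $\det B=(-1)^{c(c+1)/2}\binom{b-m+1}{c+1}$.  By the hypothesis $m\le b-c$ we have $c+1\le b-m+1$, so this binomial is a positive integer.  Under the paper's standing conventions ($2\le b\le p$, $0\le c\le p-2$) together with the ambient constraints on $m$ in the applications of this lemma, which ensure $b-m+1\le p-1$ so that $b-m+1$ is a single base-$p$ digit, Lucas' theorem immediately gives $\binom{b-m+1}{c+1}\not\equiv 0\bmod p$.  Consequently $B$ is invertible modulo $p$.

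The main obstacle is the Hankel determinant evaluation, and in particular the control of the secondary minor $M_{c-1}$ produced by the cofactor expansion.  This requires a careful use of the Hankel shift induced by Pascal's identity together with a clean accounting of cofactor signs to bring everything into agreement with the expected closed form.
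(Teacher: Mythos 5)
Your first step coincides exactly with the paper's: both factor $B=LU$ via Vandermonde--Chu, with $U=(\binom{i}{k})$ unitriangular, reducing the problem to the Hankel matrix $L_{j,k}=\binom{b-m-c+1}{b-m-j-k}$. From there the routes diverge: the paper shows $L$ has trivial kernel by solving $LX=0$ recursively, proving $x_j=(-1)^j\binom{b-m-c+j}{j}x_0$ via the identity in Lemma \ref{cmbi 4} and then forcing $x_0=0$ from the last equation; you instead evaluate $\det L$ in closed form as $(-1)^{c(c+1)/2}\binom{b-m+1}{c+1}$. Your formula is correct (it checks out for $c=0,1,2$, and in fact follows immediately from the paper's own Lemma \ref{grinberg}: reversing the columns of $L$ turns it into $\left(\binom{a+b}{a+i-j}\right)$ with $a=b-m-c$, $b=1$, whose determinant is $\binom{b-m+1}{c+1}$, the reversal contributing the sign $(-1)^{c(c+1)/2}$). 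Citing that lemma would let you skip the Pascal/cofactor induction, whose inductive step you only sketch --- the treatment of the secondary minor $M_{c-1}$ is the one place where your write-up is not yet a proof. It is worth noting that both arguments bottom out at the same arithmetic fact, the nonvanishing of $\binom{b-m+1}{c+1}=\binom{b-m+1}{b-m-c}$ mod $p$: the paper simply asserts it, while you correctly observe that it needs $b-m+1\leq p-1$, which is not among the lemma's stated hypotheses (indeed the statement as written fails for, say, $(b,m,c)=(p,1,1)$, where $\binom{p}{2}\equiv 0$) but does hold in the lemma's only application, Proposition \ref{m>c} Case (i), where $m\geq c+1\geq 2$. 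So your proof is sound where the paper's is, makes the hidden hypothesis explicit, and trades the paper's kernel computation for a determinant evaluation that is arguably more transparent once Lemma \ref{grinberg} is invoked.
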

\begin{proof}
Apply Vandermonde's identity to get
   $b_{j,i} = \sum_{0\leq l\leq c}{{b-m-c+1}\choose {b-m-j-l}}{i\choose l}$. Hence, we can write $B = B'B''$,  where $B'$ and $B''$ has the matrix entries given by ${{b-m-c+1}\choose b-m-j-l}$ and ${i\choose l}$ respectively.  Observe $B''$ is invertible as it is lower triangular with $1$ on the diagonal, so enough to prove $B'$ is invertible. And this we will show by showing $B'$ is full rank. \\
Now Let $X = (x_c,x_{c-1},...,x_0)^t$ such that $BX = 0$. So we get following system of equations                                     
\begin{equation}{\label{beta 1}}
x_j+\sum_{c-j+1\leq l \leq c}b'_{j-1,l}x_{c-l} = 0\ \forall\ \ 1\leq j\leq c
\end{equation}
\begin{equation}{\label{beta 2}}
\sum_{0\leq l\leq c}{{b-m-c+1}\choose {b-m-c-l}}x_{c-l} = 0.
\end{equation}
Now by equation \eqref{beta 1} using induction on $j$ we have $x_j= \beta_jx_0$ where 
\[
\beta_j =\begin{cases}
                   1   & \quad  for\  j= 0 \\ 
                   -{{b-m-c+1}\choose 1}  &\quad for\  j =1\\
                 -\sum_{c-j+1\leq l\leq c}{{b-m-c+1}\choose {b-m-(j+l-1)}}\beta_{c-l} &\quad for\ 2\leq j\leq c.
                \end{cases} 
\]
\textbf{Claim} $\beta_j = (-1)^j{{b-m-c+j}\choose j}$ for all $0\leq j\leq c$.\\
We will prove claim by induction on $j$.  For $j = 0$ it is trivially true. By induction assume for $0\leq j\leq k$ and we will prove it for $j = k+1$.  So we need to prove  $$\beta_{k+1} = (-1)^{k+1}{{b-m-c+k+1}\choose k+1}.$$
By using definition of $\beta_j$ and the induction step,  it is equivalent to prove the following
\begin{eqnarray*}
 -\sum_{c-k\leq l\leq c}(-1)^{c-l}{{b-m-c+1}\choose b-m-(k+l)}{{b-m-c+c-l}\choose c-l} &=&(-1)^{k+1}{{b-m-c+k+1}\choose k+1}.
\end{eqnarray*}
By putting $i = k+1-c+l$, we get the equivalent statement
\begin{align*}
& -\sum_{1\leq i\leq k+1}(-1)^{k+1-i}{{b-m-c+1}\choose b-m-c+1-i}{{b-m-c+k+1-i}\choose k+1-i} =(-1)^{k+1}{{b-m-c+k+1}\choose k+1} \\
\iff &  \ \ \  \sum_{0\leq i\leq k+1}(-1)^i{{b-m-c+1}\choose i}{{b-m-c+k+1-i}\choose b-m-c} = 0. 
\end{align*}
But the last equality follows by Lemma \ref{cmbi 4}.  Now using above claim and equation \eqref{beta 2}, we get 
\begin{align*}
\sum_{0\leq l\leq c}(-1)^{c-l}{{b-m-c+1}\choose {b-m-c-l}}{{b-m-l}\choose c-l}x_0 = 0.
\end{align*}
By Lemma \ref{cmbi 4}, we deduce $X = \textbf{0}\in \mathbb{F}^{c+1}_p$ since ${{b-m++1}\choose b-m-c}\not\equiv 0\bmod p$.
\end{proof}

\begin{lemma}{\label{invmt 1}}
Let $m,n\in \mathbb{N}$ such that $c\leq m$ then $B= \left({{m-c+j}\choose i}\right)_{\substack{1\leq j\leq c\\0\leq i\leq c-1}}\in\mathrm{GL}_c(\mathbb{F}_p)$. 
\end{lemma}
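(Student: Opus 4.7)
The plan is to show that $\det B = 1$ as an integer, from which $B \in \mathrm{GL}_c(\mathbb{F}_p)$ is automatic with no hypothesis on $p$ at all. I would argue by induction on $c$, using Pascal's identity $\binom{n}{i} - \binom{n-1}{i} = \binom{n-1}{i-1}$.

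Perform the row operations $R_j \leftarrow R_j - R_{j-1}$ for $j = c, c-1, \ldots, 2$, in that decreasing order, so that each operation uses the still-untouched preceding row. By Pascal's rule the new $(j,i)$-entry for $j \geq 2$ becomes $\binom{m-c+j-1}{i-1}$ (with the convention $\binom{\cdot}{-1} = 0$), while row $1$ is unchanged. In particular the first column becomes $(1, 0, 0, \ldots, 0)^{T}$, since $\binom{m-c+1}{0} = 1$. Cofactor expansion along the first column then reduces the computation to the determinant of the $(c-1)\times(c-1)$ block indexed by $2 \leq j \leq c$, $1 \leq i \leq c-1$, whose $(j,i)$-entry is $\binom{(m-1)-(c-1)+(j-1)}{i-1}$. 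After the reindexing $j' := j-1$, $i' := i-1$, this is a matrix of the exact same form as $B$ but with $(m,c)$ replaced by $(m-1, c-1)$; the hypothesis $c \leq m$ descends correctly to $c-1 \leq m-1$. By the inductive hypothesis its determinant equals $1$, so $\det B = 1$. The base case $c = 1$ is the $1\times 1$ matrix $(\binom{m}{0}) = (1)$, which is trivial.

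I do not anticipate any real obstacle. The one point requiring care is the order of the row operations: they must be done for decreasing $j$ so that when $R_j \leftarrow R_j - R_{j-1}$ is applied, $R_{j-1}$ is still the original row. An alternative route would be to factor $B = V \cdot U$, where $V_{j,k} = (m-c+j)^{k}$ is the Vandermonde matrix on the consecutive nodes $m-c+1, \ldots, m$ and $U$ is the upper-triangular change-of-basis between $\{y^{i}\}$ and $\{\binom{y}{i}\}$; the Vandermonde determinant $\prod_{1 \leq i < j \leq c}(j-i) = \prod_{k=1}^{c-1} k!$ then cancels exactly against $\prod_{i=0}^{c-1} (i!)^{-1}$, yielding $\det B = 1$ once more. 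This second approach tacitly requires $i! \not\equiv 0 \pmod{p}$ for $0 \leq i \leq c-1$ (which is fine since $c \leq p-2$ by the conventions in \S 2.5), whereas the inductive argument is a clean integer identity needing no such caveat.
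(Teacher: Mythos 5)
Your proof is correct, and it takes a genuinely different route from the paper's. The paper applies Vandermonde's convolution to factor $B = B'B''$ with $B'_{j,l} = \binom{j}{l}$ and $B''_{l,i} = \binom{m-c}{i-l}$, observes that $B''$ is unitriangular, and then establishes that $B'$ has full rank over $\mathbb{F}_p$ by solving the homogeneous system $B'X=0$ row by row. Your argument instead computes $\det B$ directly by downward row operations $R_j \leftarrow R_j - R_{j-1}$ and Pascal's rule, which zeroes out the first column below the $(1,0)$-entry and sets up a clean induction on $c$ reducing $(m,c)$ to $(m-1,c-1)$; the hypothesis $c \leq m$ is exactly what keeps $m-c+j \geq 1$ so that Pascal's identity applies with the paper's convention $\binom{\cdot}{-1}=0$. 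What your approach buys is a stronger and cleaner conclusion: $\det B = 1$ as an integer identity, so invertibility mod $p$ is immediate with no case analysis and no dependence on the size of $c$ relative to $p$ (and indeed your Vandermonde alternative, though it divides by $i!$ at an intermediate stage, also lands on the integer value $1$, so even that caveat evaporates). The paper's factorization is lengthier but fits the pattern it reuses elsewhere (e.g.\ Lemma \ref{invmt 2}), where only full rank mod $p$, not an exact determinant, is extracted.
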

\begin{proof}
Using Vondermond 's identity  for $1\leq j\leq c$, we get
$${{m-c+j}\choose i} = \sum_{0\leq l\leq c-1}{j\choose l}{{m-c}\choose i-l}.$$
Using the above identity, we can write $B = B'B''$ where $B' =\left(b'_{j,l}\right), b'_{j,l} = {j\choose l}$ for $1\leq j\leq c, 0\leq l \leq c-1$ and $B'' = \left({{m-c}\choose i-l}\right)$.  Using the above identity, we can write $B = B'B''$ where $B' =\left(b'_{j,l}\right), b'_{j,l} = {j\choose l}$ for $1\leq j\leq c, 0\leq l \leq c-1$ and $B'' = \left({{m-c}\choose i-l}\right)$. 
Note that $B''$ is an upper triangle with $1$ on the diagonal, so is invertible. Hence to prove $B$ is invertible enough to prove $B'$ is invertible, and this we will prove by proving it is full rank. Take $X^t = (x_0,x_1,...,x_{c-1})\in\mathbb{Z}^{c}_p$ is solution of $B'X =0$.Note that $B''$ is upper triangle with $1$ on diagonal, so is invertible. Hence to prove $B$ is invertible enough to prove $B'$ is invertible, and this we will prove by proving it is full rank. Take $X^t = (x_0,x_1,...,x_{c-1})\in\mathbb{Z}^{c}_p$ is solution of $B'X =0$.
\begin{equation}{\label{j=0}}
\implies \ \ \ \ \ \ \ \ \ \ \ \sum_{0\leq l\leq c-1}{c\choose l}x_l = 0 \quad\text{for }\quad j =c \hspace{10em}
\end{equation}

\begin{equation}{\label{jn0}}
 \sum_{0\leq l\leq j} {j\choose l}x_l= 0\  \forall \ \ 1\leq j\leq c-1.
\end{equation}
Using above system of equation \eqref{jn0},  by induction we will prove $x_l = (-1)^lx_0$ for $0\leq l\leq c-1$. Our claim fallow for $l = 1$ by putting $j = 1$ in system of equation \ref{jn0}. Assume by induction  $x_l = (-1)^lx_0$ for $0\leq l\leq k-1$, and we will prove for $l = k\leq c-1$. Now using $k^{th}$ equation in \eqref{jn0} we get
    $$x_k+\sum_{0\leq l\leq k-1} {k\choose l}x_l  = 0$$
  $$\implies \ \ \ \ \ x_k+\sum_{0\leq l\leq k-1}(-1)^l {k\choose l}  x_0 = 0$$ 
  which gives $-(-1)^kx_0+x_k=0 \ \ \ \implies x_k = (-1)^kx_0 $.  By putting $x_l = (-1)^lx_0$ in equation \eqref{j=0}  to see $x_0 = 0$. Therefore, $B'$ is of full rank.
\end{proof}

For every $n\in\mathbb{Z}^{\geq 0}$ define the function $H(n)$ as follows 
$$H(n) := \prod\limits_{i = 0}^{n-1} i!.$$
From the above definition, it is clear that $H(n)\not\equiv 0\bmod p$ for all $n\leq p$.

\begin{lemma}[D. Grinberg,  P.A. MacMahon]{\label{grinberg}}
For every $a,b,c\in\mathbb{Z}^{\geq 0}$, we have 
\begin{eqnarray*}
\det\left(\left({{a+b+i-1}\choose a+i-j}\right)_{1\leq i,j\leq c}\right)
&=& \det\left(\left({{a+b}\choose a+i-j}\right)_{1\leq i,j\leq c}\right)\\
&=& \frac{H(a)H(b)H(c)H(a+b+c)}{H(b+c)H(c+a)H(a+b)}. 
\end{eqnarray*}
\end{lemma}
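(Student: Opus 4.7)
The plan is to establish the two equalities in turn. For the first, I would apply the Chu--Vandermonde identity entrywise:
\[
\binom{a+b+i-1}{a+i-j} \;=\; \sum_{k=0}^{i-1} \binom{i-1}{k}\binom{a+b}{a+i-j-k}.
\]
Re-indexing via $i' = i-k$ writes this as $M_{ij} = \sum_{i'=1}^{c} L_{i,i'}\, N_{i',j}$, where $L_{i,i'} = \binom{i-1}{i-i'}$ is lower triangular with all diagonal entries equal to $1$, and $N_{i',j} = \binom{a+b}{a+i'-j}$ is the second matrix of the lemma. Thus $M = LN$ with $\det L = 1$, so $\det M = \det N$, which is the first claimed equality.

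For the second equality, my preferred route is the Lindstr\"om--Gessel--Viennot (LGV) lemma together with MacMahon's box formula. Take starting points $A_i = (1-i,\, i-1)$ and ending points $B_j = (a+1-j,\, b+j-1)$ for $1 \le i, j \le c$, using unit east and north steps. The number of such lattice paths from $A_i$ to $B_j$ is exactly $\binom{a+b}{a+i-j}$. With both antidiagonals of points arranged monotonically, only the identity permutation admits a non-intersecting matching, so the LGV lemma identifies $\det N$ with the number of non-intersecting families of $c$ such paths. These families biject with plane partitions fitting inside an $a \times b \times c$ box, whose count is given by MacMahon's classical product
\[
P(a,b,c) \;=\; \prod_{i=1}^{a}\prod_{j=1}^{b}\prod_{k=1}^{c}\frac{i+j+k-1}{i+j+k-2}.
\]
Telescoping the inner product over $k$ yields $\prod_{i,j}\frac{i+j+c-1}{i+j-1}$, and regrouping the remaining double products through the identity $\prod_{i=1}^{n}\prod_{j=1}^{m}(i+j-1) = H(n+m)/\bigl(H(n)H(m)\bigr)$ rewrites $P(a,b,c)$ as $\frac{H(a)H(b)H(c)H(a+b+c)}{H(a+b)H(b+c)H(c+a)}$, matching the stated closed form.

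The main obstacle is MacMahon's box formula itself, which is a nontrivial external input. If one prefers a self-contained argument, a clean alternative is induction on $c$ via the Desnanot--Jacobi (Dodgson condensation) identity: it expresses the $c \times c$ determinant in terms of $(c-1)\times(c-1)$ and $(c-2)\times(c-2)$ minors of the same Toeplitz-like form with the parameters $(a,b)$ shifted appropriately, and one verifies directly that the candidate $H$-function closed form satisfies the resulting quadratic recursion, once the base cases $c \in \{0,1\}$ (both trivial) are in hand.
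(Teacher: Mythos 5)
Your proposal is correct, but it takes a genuinely different (and more self-contained) route than the paper, which does not prove the lemma at all: it simply cites Theorem $8$ of Grinberg's note \cite{grinberg} and moves on. Your first step is fine and is essentially the same triangular-factorization trick the paper itself uses elsewhere (e.g.\ in Lemmas \ref{invmt 2} and \ref{invmt 1}): Chu--Vandermonde gives $\binom{a+b+i-1}{a+i-j}=\sum_{k\ge 0}\binom{i-1}{k}\binom{a+b}{a+i-j-k}$, the reindexing $i'=i-k$ exhibits $M=LN$ with $L$ unitriangular, and the first equality follows. For the second equality, your LGV argument is sound: the path count from $A_i=(1-i,i-1)$ to $B_j=(a+1-j,b+j-1)$ is indeed $\binom{a+b}{a+i-j}$, the monotone arrangement on the two antidiagonals forces the identity permutation, and the standard bijection with plane partitions in an $a\times b\times c$ box together with MacMahon's product formula and your (correct) hyperfactorial regrouping $\prod_{i=1}^{n}\prod_{j=1}^{m}(i+j-1)=H(n+m)/\bigl(H(n)H(m)\bigr)$ yields exactly the stated closed form; the degenerate cases $a=0$, $b=0$, $c=0$ all check out against $H(0)=H(1)=1$. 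The trade-off is that you import MacMahon's box formula, which is a deeper external input than the elementary manipulations in Grinberg's note; your proposed fallback via Desnanot--Jacobi condensation (verifying that the $H$-function expression satisfies the resulting quadratic recursion, with trivial base cases $c\in\{0,1\}$) would make the argument fully self-contained and is a perfectly standard way to finish. Either version is acceptable; both go beyond what the paper records.
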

For the proof of this lemma see Theorem $8$, \cite{grinberg}.

\section{Towards elimination of JH factors}

\begin{prop}{\label{gen 1}}
 Let $r= s+p^t(p-1)d$, with $p\nmid d$, $s =b+c(p-1)$ and suppose also that $2\leq b\leq p$ and $0\leq m<c\leq\nu(a_p)<p-1$. Further we assume  $t>\nu(a_p)+c-1$ if $(b,  c,  m) \not = (p,  1,  0)$ and $t>\nu(a_p)+c$ if $(b,  c,  m) = (p,  1,  0)$. Then for all $g\in G$ and for $0\leq l\leq c-1$, there exists $f^l\in \indkg(\symqp)$ such that 
\begin{equation}{\label{1st gen 1}}
(T-a_p)f^l\equiv \left[g, \underset{\substack{0<j<s-m\\j\equiv (s-m)\bmod (p-1)}}\sum{{r-l}\choose j}x^{r-j}y^j\right].
\end{equation}
Further suppose $(b,c, m)\not=(p, 1,0),\ \nu(a_p)>c$, $t>\nu(a_p)+c$, and assume $0\leq l\leq b-c$ if $m = 0$. Then  for all $g\in G$ there exists $f^l\in \indkg(\symqp)$ such that   
\begin{equation}{\label{2nd gen 1}}
(T-a_p)\left(\frac{f^l}{p}\right)\equiv \left[g, \underset{\substack{0<j<s-m\\j\equiv (s-m)\bmod (p-1)}}\sum\frac{{{r-l}\choose j}}{p}x^{r-j}y^j\right].
\end{equation}
\end{prop}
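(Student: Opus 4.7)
The plan is to construct $f^l$ as a finite telescoping expansion
\[
f^l \;=\; -\frac{1}{a_p}\,F^l_0 \;+\; \frac{1}{a_p^2}\,F^l_1 \;-\;\cdots\;+\;(-1)^{N+1}\frac{1}{a_p^{N+1}}\,F^l_N,
\]
where $F^l_0$ is supported on the coset of $g$ and each $F^l_n$ for $n\geq 1$ is supported on ``level-$n$'' cosets of the form $g\cdot g^0_{1,\lambda_1}\cdots g^0_{1,\lambda_n}$. By $G$-equivariance of $T$ I first reduce to the case $g = g^0_{0,0}$. The design principle is that applying $T = T^+ + T^-$ to $f^l$ and subtracting $a_pf^l$ telescopes level by level: the part of $T^+ F^l_{n-1}$ on level $n$ is cancelled modulo the appropriate $p$-power by the diagonal $-a_p F^l_n/a_p^{n+1}$ together with $T^- F^l_{n+1}/a_p^{n+2}$, leaving only the target polynomial on the coset of $g$.

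At the base level I take $F^l_0 = [g, v^l_0]$ with $v^l_0$ equal to the target polynomial $\sum \binom{r-l}{j} x^{r-j} y^j$. Computing $T^+ F^l_0$ via the formula in \S\ref{Hecke}, the coefficient of $x^{r-j} y^j$ on the coset $g\cdot g^0_{1,\lambda}$ is $p^j \sum_i \binom{r-l}{i}\binom{i}{j}(-\lambda)^{i-j}$ with $i$ restricted to the target support $\{0<i<s-m,\ i\equiv s-m \bmod (p-1)\}$. After summation over $\lambda \in \mu_{p-1}$ via Teichm\"{u}ller orthogonality, extending the range of $i$ to $[j,\,r-m)$ introduces an error which is precisely the sum $S_{r,j,l,m}$ from \eqref{dfnsrjm}. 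I then choose $F^l_1$ (and inductively each $F^l_n$) as a Teichm\"{u}ller-weighted sum built out of polynomials of the form $x^l y^{r-l}$, leveraging the explicit formula for $T^-$ at $n \geq 1$, which converts $[g^0_{1,\mu}, x^l y^{r-l}]$ into $[g,\, p^l \sum_j \binom{r-l}{j}\mu^{r-l-j} x^{r-j} y^j]$ on the lower coset, so as to match the extended sum up to an $S_{r,i,l,m}$-type residue.

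Convergence of the telescope modulo $p$ rests on two estimates. First, Lemma~\ref{srjm} bounds $S_{r,i,l,m}$ by $p^{t-c+1}$ in general, and only by $p^{t-c}$ in the exceptional case $b=p,\ c+m<2$, which corresponds to $(b,c,m) = (p,1,0)$. Second, the scaling $a_p^{-(n+1)}$ at level $n$ has valuation $-(n+1)\nu(a_p)$, partially compensated by the $p^j$-factor from $T^+$ in any range $j > \nu(a_p)$; the problematic monomials are those with $j \leq \lfloor \nu(a_p) \rfloor$. After at most $c$ iterations --- the number dictated by the $(p-1)$-divisibility structure of $s-m$ --- the accumulated valuation of the error is at least $t - c + 1 - c\nu(a_p)$ together with the $j$-margin, which is positive precisely when $t > \nu(a_p) + c - 1$. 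The exceptional triple $(p,1,0)$ loses one power of $p$ from Lemma~\ref{srjm}, explaining the sharper bound $t > \nu(a_p) + c$ in that case.

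For the second statement, every error estimate must improve by one further power of $p$, i.e.\ the same construction must work modulo $p^2$. The strengthened $t > \nu(a_p)+c$ supplies the extra $p$-margin from Lemma~\ref{srjm}; the condition $\nu(a_p) > c$ gives one more unit of slack in the $a_p^{-n}$-scaling at every iteration; and when $m = 0$, the restriction $l \leq b-c$ is exactly the regime in which Lemma~\ref{rk3.15}(A) yields an extra unit of $p$-valuation for the coefficient $\binom{r-l}{b-m}$ appearing in the level-$0$ analysis. The main obstacle throughout is the careful level-by-level matching of $T^+$-outputs to $T^-$-inputs --- tracking both the target support and its $S$-sum complement --- together with verifying, case by case, that all residual terms are genuinely $p$-divisible to the required order dictated by the hypotheses on $t$, $\nu(a_p)$, and $(b,c,m)$.
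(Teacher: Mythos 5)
Your ansatz --- a geometric-series telescope $f^l=-\sum_{n}T^{n}[g,v_{0}]/a_p^{n+1}$ seeded by the target polynomial at the coset of $g$ and spreading upward --- does not converge, and this is a genuine gap rather than a presentational one. The coefficient of $x^{r-j}y^{j}$ in $T^{+}$ of a level-$n$ term carries only the factor $p^{j}$, so for the low-degree monomials in the target class (e.g.\ $j=b-m<\nu(a_p)$, and a fortiori the out-of-class monomial $x^{r}$ with $j=0$, whose coefficient $\sum_i c_i(-\lambda)^i$ is generically a unit) the error $T^{+}F^l_N/a_p^{N+1}$ has valuation tending to $-\infty$ with $N$; it never becomes $0$ bmod $p$. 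The Teichm\"uller orthogonality you invoke to produce $S_{r,j,l,m}$ is not available inside $T^{+}$: each target coset $g^{0}_{n+1,\mu+p^{n}\lambda}$ receives the contribution of a \emph{single} $\lambda$, so there is no summation over $\mu_{p-1}$ to exploit. (Orthogonality does apply in $T^{-}$, where all the level-$(n+1)$ cosets map down to one coset --- which is exactly where the paper uses it.) Your own bookkeeping betrays the problem: you need ``$t-c+1-c\nu(a_p)>0$'', which is far stronger than the hypothesis $t>\nu(a_p)+c-1$ once $c\geq 2$.

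The paper's construction is not iterative and uses $a_p^{-1}$ exactly once. It sets $f^l=f_{3,l}-f_{2,l}+f_{1,l}+f_{0}$ with: $f_{3,l}$ at level $2$, a $\lambda$-weighted sum of $F_l(x,y)/(\lambda^{m-l}p^{l}(p-1))$ whose $T^{-}$ (via orthogonality) produces the \emph{full} congruence-class sum $\sum_{j\equiv r-m}\binom{r-l}{j}x^{r-j}y^{j}$ at $g^{0}_{1,0}$ --- the denominator is $p^{l}$ with $l\leq c-1<\nu(a_p)<p-1$, not a power of $a_p$; $f_{2,l}$ at level $2$ removing the $j=r-m$ term; $f_{1,l}$ at level $1$, the only term with $a_p^{-1}$, removing the range $s-m\leq j<r-m$, with $T^{+}(f_{1,l})$ controlled precisely because its coefficients are $p^{j}S_{r,j,l,m}/a_p$ and Lemma~\ref{srjm} gives $\nu(S_{r,j,l,m})\geq t-c+1$ (only $t-c$ when $(b,c,m)=(p,1,0)$, whence the split hypothesis on $t$); and $f_{0}$ removing $x^{r}$ when $r\equiv m\bmod(p-1)$. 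The second assertion is the same construction divided by $p$, where $\nu(a_p)>c$ absorbs the extra $p^{-1}$ on $a_pf_{i,l}/p$ and the condition $l\leq b-c$ for $m=0$ guarantees integrality of the target coefficients $\binom{r-l}{j}/p$. You correctly identified $S_{r,j,l,m}$, Lemma~\ref{srjm}, and the role of the exceptional triple, but the architecture you built around them would need to be replaced by something of this one-shot form.
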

\begin{proof}
We begin by observing that the coefficients $\frac{{{r-l}\choose j}}{p}$ in (\ref{2nd gen 1}) are integral if $0\leq m\leq l\leq b-c$ or $b\leq m\leq l\leq b-c+p$.
Consider the following functions
\begin{eqnarray*}
f_{3,l} &=& \sum_{\lambda\in I^*_1}\left[ g^0_{2,p\lambda}, \frac{F_l(x,y)}{\lambda^{m-l}p^l(p-1)}\right]\\
f_{2,l} &=& \left[ g^0_{2,0},{{r-l}\choose r-m}\frac{F_m(x,y)}{p^m}\right]\\
f_{1,l} &=&\left[g^0_{1,0}, \frac{1}{a_p}\underset{\substack{s-m\leq j<r-m\\j\equiv (r-m)\bmod (p-1)}}\sum{{r-l}\choose j}x^{r-j}y^j\right]
\end{eqnarray*}
\[  f_0 = \begin{cases}
              [1,\ F_s(x,y)] & \text{if}\ r\equiv m\bmod (p-1)\\
              0   \   & \    \text{else}
              \end{cases}
\]
\begin{align*}
T^+\left(\left[ g^0_{2,p\lambda}, \frac{F_l(x,y)}{\lambda^{m-l}p^l(p-1)}\right]\right) = \sum_{\mu\in I^*_1}\left[ g^0_{3,p\lambda+p^2\mu}, \sum_{0\leq j\leq s-l}\frac{p^{j-l}(-\mu)^{s-l-j}}{\lambda^{m-l(p-1)}}\left({{r-l}\choose j}- {{s-l}\choose j}\right)x^{r-j}y^j\right]\\
+\sum_{\mu\in I}\left[ g^0_{3,p\lambda+p^2\mu}, \sum_{s-l+1\leq j\leq r-l}\frac{p^{j-l}(-\mu)^{r-l-j}}{\lambda^{m-l}(p-1)}{{r-l}\choose j}x^{r-j}y^j\right]\\
\hspace{5em} -\left[ g^0_{3,p\lambda}, \frac{p^{s-2l}}{\lambda^{m-l}(p-1)}x^{r-s+l}y^{s-l}\right].
\end{align*}
\vspace{2mm}\\
Now we will estimate the valuation of the coefficients in the three sums (I),(II) and (III) above.  In (I),  for $j\geq 1$,  $\nu\left({{r-l}\choose j}-{{s-l}\choose j}\right)\geq t-\nu(j!) \implies$ $ j-l+t-\nu(j!)\geq t-(c-1)+1\geq \nu(a_p)+1>1$.  For (III),  $s-2l\geq b+c(p-1)-2(c-1)\geq b+c(p-3)+2\geq b+2\geq 4$. For (II) the same computation as in (III) shows that $j-l\geq b+3\geq 5$.  Therefore we have $T^+\left(\frac{f_{3,l}}{p}\right)\equiv\ 0\bmod p$.  Now, 
\begin{align*}
T^-\left(\left[ g^0_{2,p\lambda}, \frac{F_l(x,y)}{\lambda^{m-l}p^l(p-1)}\right]\right) =- \left[ g^0_{1,0}, \sum_{0\leq j\leq s-l}\frac{p^{r-s}\lambda^{s-m-j}}{(p-1)}{{s-l}\choose j}x^{r-j}y^j\right]\\
+\left[ g^0_{1,0}, \sum_{0\leq j\leq r-l}\frac{\lambda^{r-m-j}}{(p-1)}{{r-l}\choose j}x^{r-j}y^j\right].
\end{align*}
In the first sum the valuation of the coefficients are atleast $r-s\gg 0$, and therefore we have 
\begin{eqnarray*}
  T^-\left(\frac{ f_{3,l}}{p}\right) &\equiv & \left[g^0_{1,0}, \underset{\substack{0\leq j\leq r-m\\ j\equiv (r-m)\bmod (p-1)}}\sum\frac{{{r-l}\choose j}}{p}x^{r-j}y^j\right].\\
\end{eqnarray*}
For $f_{2,l}$ we observe that similar computation as above gives $T^{+} \left(\frac{f_{2, l}}{p}\right) \equiv 0 \bmod p$ and 
$$T^-\left(\frac{f_{2,l}}{p}\right)\equiv \left[g^0_{1,0}, \ \frac{{{r-l}\choose r-m}}{p}x^my^{r-m}\right].$$
Now, 
$$ T^+(f_{1,l}) = \sum_{\lambda\in I^*_1}\left[ g^0_{2,p\lambda}, \sum_{0\leq j\leq r}\frac{p^j(-\lambda)^{s-m-j}}{a_p}\underset{\substack{s-m\leq i<r-m\\i\equiv (r-m)\bmod (p-1)}}\sum{{r-l}\choose i}{i\choose j}x^{r-j}y^j\right]\\$$
\begin{equation}{\label{tf1}}
\hspace{19em} +\left[ g^0_{2,0}, \underset{\substack{s-m\leq j<r-m\\j\equiv (r-m)\bmod (p-1)}}\sum\frac{p^j}{a_p}{{r-l}\choose j}x^{r-j}y^j\right].
\end{equation}

Here we note $m\leq c-1$,  and that for $j\geq s-(c-1)$, $j-\nu(a_p)\geq b+(c-1)(p-1)-(c-1)+p-1-\nu(a_p)>b+(c-1)(p-2)\geq 2$ (as $c\geq 1$).  Thus the first sum truncates to $j\leq s-c$ and the second sum is zero mod $p$.
\begin{eqnarray*}
T^+(f_{1,l}) &=& \sum_{\lambda\in I^*_1}\left[ g^0_{2,p\lambda}, \sum_{0\leq j\leq s-c}\frac{p^j(-\lambda)^{s-m-j}}{a_p}\underset{\substack{s-m\leq i<r-m\\i\equiv (r-m)\bmod (p-1)}} \sum{{r-l}\choose i}{i\choose j}x^{r-j}y^j\right]\\
\implies T^+(f_{1,l}) &=& \sum_{\lambda\in I^*_1}\left[ g^0_{2,p\lambda}, \sum_{0\leq j\leq s-c}\frac{p^j(-\lambda)^{s-m-j}}{a_p}S_{r,j,l,m}\ x^{r-j}y^j\right]
\end{eqnarray*}
where $S_{r,j,l,m}$ is defined in equation \eqref{dfnsrjm}.  Now $(b,c,m)\not=(p,1,0)$ implies that either $b\leq p-1$ or $c+m\geq 2$ (or both).  So Lemma \ref{srjm} gives $\nu(S_{r,j,l,m})\geq t+1-c$,  and therefore the valuation of above coefficients is atleast $j+t+1-c-\nu(a_p)\geq t-(\nu(a_p)+c-1)>0$ giving $ T^+(f_{1,l})\equiv\ 0\bmod p$. For $(b,c,m)= (p,1,0)$, Lemma \ref{srjm} gives $\nu(S_{r,j,l,m})\geq t-c$, so the valuation of above coefficients is atleast $j+t-c-\nu(a_p)\geq t-(c+\nu(a_p))>0$, again giving $T^+(f_{1,l})\equiv\ 0\bmod p$. Observe that the same calculation gives $T^+\left(\frac{f_{1,l}}{p}\right)\equiv\ 0\bmod\ p$ when $(b,c,m)\not= (p,1,0)$ (using $t>\nu(a_p)+c$). Next,
$$T^-(f_{1,l}) = \left[ 1, \underset{\substack{s-m\leq j<r-m\\ j\equiv (r-m)\bmod (p-1)}}\sum\frac{p^{r-j}}{a_p}{{r-l}\choose j}x^{r-j}y^j\right]$$
where the valuation of the coefficients is atleast $r-j-\nu(a_p)\geq m+p-1-\nu(a_p)>0\ \ \Rightarrow\ \ T^-(f_{1,l})\equiv\ 0\bmod\ p$.  Also, 
$$T^-\left(\frac{f_{1,l}}{p}\right) = \left[ 1, \underset{\substack{s-m\leq j<r-m\\ j\equiv (r-m)\bmod (p-1)}}\sum\frac{p^{r-j-1}}{a_p}{{r-l}\choose j}x^{r-j}y^j\right].$$
 For $j\leq r-m-2(p-1)$ the valuation of the above coefficients are atleast $r-j-1-\nu(a_p)\geq p-2+m+p-1-\nu(a_p)>0$.  For $j= r-m-(p-1)$, the valuation of the coefficient is
$r-j-1-\nu(a_p)+\nu({{r-l}\choose r-m-(p-1)})\geq m+\nu\left({{r-l}\choose r-m-(p-1)}\right)-1+p-1-\nu(a_p)>m+\nu\left({{r-l}\choose r-m-(p-1)}\right)-1\geq 0$. Observe that the last inequality is clear if $m\geq 1$.  Further if $ m = 0$ then $\nu{{r-l}\choose p-1-l}\geq 1$ since $b-c-l<p-1-l$ as $b-c<p-1$ (note: $(b,c,m)\not= (p,1,0)$ and $c\geq 1$). Therefore we have $T^-\left(\frac{f_{1,l}}{p}\right) \equiv 0\bmod p$ when $(b,c,m)\not= (p,1,0)$.\\
If $r\equiv m\bmod (p-1)$ then 
\begin{eqnarray*}
T^+(f_0) &=& \sum_{\lambda\in I^*_1}\left[ g^0_{1,\lambda}, (-1+(-\lambda)^{r-s})x^r\right]+\sum_{\lambda\in I_1}\left[g^0_{1,\lambda},\sum_{1\leq j\leq r-s}p^j{{r-s}\choose j}(-\lambda)^{r-s-j}x^{r-j}y^j\right]\\
&&\hspace*{30em}+[g^0_{1,0},-x^r]\\
\Rightarrow \ \ \ T^+(f_0) &\equiv & -[g^0_{1,0},\  x^r]\\
T^-(f_0) &=& \left[ \alpha, \ -p^rx^r + p^sx^sy^{r-s}\right]\equiv\ 0\bmod p\\
T^+\left(\frac{f_0}{p}\right) &=& \sum_{\lambda\in I^*_1}\left[ g^0_{1,\lambda}, \frac{(-1+(-\lambda)^{r-s})}{p}x^r\right] +\sum_{\lambda\in I_1}\left[g^0_{1,\lambda},\sum_{1\leq j\leq r-s}p^{j-1}{{r-s}\choose j}(-\lambda)^{r-s-j}x^{r-j}y^j\right]\\
&&\hspace{30em}+[g^0_{1,0},\ -\frac{1}{p}x^r].
\end{eqnarray*}
Observe that if $j = 1 $, $\ {{r-s}\choose j} = r-s$ which is divisible  by $p^t,\ t\geq 1$. Thus
\begin{eqnarray*}
 \ \ \ T^+\left(\frac{f_0}{p}\right) &\equiv & -[g^0_{1,0},\ \frac{1}{p} x^r]\\
T^-\left(\frac{f_0}{p}\right) &=& \left[ \alpha, \ -p^{r-1}x^r + p^{s-1}x^sy^{r-s}\right]\equiv\ 0\bmod p.
\end{eqnarray*}
Note that $a_pf_{3,l},a_pf_{2,l},a_pf_{0}\equiv  0$, and also $\frac{a_pf_{3,l}}{p},\frac{a_pf_{2,l}}{p},\frac{a_pf_{0}}{p}\equiv 0$ as $\nu(a_p)>c$.
\begin{eqnarray*}
(T-a_p)(f_{3,l})&\equiv &\left[g^0_{1,0}, \underset{\substack{0\leq j\leq r-m\\j\equiv (r-m)\bmod (p-1)}}\sum{{r-l}\choose j}x^{r-j}y^j\right]\\
(T-a_p)(f_{2,l})&\equiv &\left[g^0_{1,0},{{r-l}\choose r-m}x^my^{r-m}\right]
\end{eqnarray*}
\begin{eqnarray*}
(T-a_p)(f_{1,l})&\equiv &-\left[g^0_{1,0}, \underset{\substack{s-m\leq j<r-m\\j\equiv (r-m)\bmod (p-1)}}\sum{{r-l}\choose j}x^{r-j}y^j\right]
\end{eqnarray*}
\[(T-a_p)(f_{0,l})\equiv  \begin{cases}
              -[g^0_{1,0},\  x^r] & \ if\ r\equiv m\bmod (p-1)\\
              0   \   & \    else.
              \end{cases}
\]
Hence  $f^l := f_{3,l}-f_{2,l}+f_{1,l}+f_{0,l}$ gives the required result.
\end{proof}

\begin{prop}{\label{mono 1}}
Let $r= s+p^t(p-1)d$, with $p\nmid d$, $s =b+c(p-1)$ and suppose also that $c\leq b\leq p$ and $1\leq m<c<\nu(a_p)<p-1$. Further if $t>\nu(a_p)+c$  then the monomials $x^{r-b+m-j(p-1)}y^{b-m+j(p-1)}$ are in $\kerp$ for $0\leq j\leq c-1$.
\end{prop}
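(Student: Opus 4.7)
The plan is to apply Proposition \ref{gen 1}, equation (\ref{2nd gen 1}), with $g = \mathrm{id}$, for $c$ different values of $l$, obtaining $c$ congruences in $\kerp$ involving the $c$ target monomials $u_j := x^{r-b+m-j(p-1)} y^{b-m+j(p-1)}$ ($0 \leq j \leq c-1$), and then use linear algebra (invertibility of the coefficient matrix mod $p$) to conclude that each $u_j$ individually lies in $\kerp$.

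First, I observe that, using $s - m = b - m + c(p-1)$ and the fact $b > m$ (from $c \leq b$ and $m < c$), the index set $\{j : 0 < j < s-m,\ j \equiv s-m \bmod (p-1)\}$ appearing in (\ref{2nd gen 1}) is exactly $\{b-m+k(p-1) : 0 \leq k \leq c-1\}$. Hence for each admissible $l$,
$$\Big[\mathrm{id},\; \sum_{j=0}^{c-1} \beta_{l,j}\, u_j\Big] \in \kerp, \qquad \beta_{l,j} := \tfrac{1}{p}\tbinom{r-l}{b-m+j(p-1)} \bmod p,$$
since the left-hand side of (\ref{2nd gen 1}) equals $(T-a_p)(f^l/p)$, an element of $(T-a_p)\indkg(\symqp) \cap \indkg(\symzp)$ whose image in $\bar{\Theta}_{k',a_p}$ is zero. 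The hypotheses $(b,c,m)\neq(p,1,0)$ (automatic since $m\geq 1$), $\nu(a_p)>c$, and $t>\nu(a_p)+c$ from Proposition \ref{mono 1} ensure that this use of (\ref{2nd gen 1}) is legitimate.

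Second, for the $c$ values $l = m, m+1, \ldots, m+c-1$, lying in the admissible range $[m,\, b-c]$ (where the coefficients $\tfrac{1}{p}\binom{r-l}{j}$ are integral, per the opening remark of the proof of Proposition \ref{gen 1}), Lemma \ref{lmk68}(1) gives
$$\beta_{l,j} \equiv (-1)^{l-m}\, \frac{\binom{b-m}{j}\,\binom{p-1+m-l}{c-1-j}}{\binom{b-m-c}{l-m}\binom{b-m}{c}} \bmod p.$$
This factors the $c\times c$ matrix $B = (\beta_{l,j})$ as $D_1 \cdot M \cdot D_2$, where $D_1, D_2$ are diagonal with nonzero diagonal entries mod $p$ and $M = \bigl(\binom{p-1+m-l}{c-1-j}\bigr)_{l,j}$. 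After the re-indexing $l \mapsto l - m$, $j \mapsto c-1-j$, the matrix $M$ matches the form treated in Lemma \ref{invmt 1}, and is therefore invertible modulo $p$; consequently $B$ is invertible modulo $p$ as well.

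Inverting $B$, the $c$ congruences $\sum_j \beta_{l,j}\, u_j \in \kerp$ yield $[\mathrm{id},\, u_j] \in \kerp$ for each $0 \leq j \leq c-1$, as claimed. The main technical obstacle is handling the boundary regime where $b-c < m+c-1$, so that the interval $[m, b-c]$ is too narrow to supply $c$ distinct admissible $l$-values; in such sub-cases one expects to need to supplement the above $l$-set by invoking Proposition \ref{gen 1}(\ref{1st gen 1}) together with the precise valuation data of Lemma \ref{rk3.15} to construct further independent congruences, and then to verify the expanded coefficient matrix remains invertible using Lemma \ref{invmt 2} or Lemma \ref{grinberg}.
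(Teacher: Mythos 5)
Your overall strategy is the right one and matches the paper's: feed several values of $l$ into Proposition \ref{gen 1}, observe that the surviving index set in the sum is exactly $\{b-m+j(p-1): 0\le j\le c-1\}$, assemble the resulting relations into a $c\times c$ matrix of binomial coefficients, and invert it mod $p$. Within the regime where your choice $l=m,\dots,m+c-1$ is actually admissible for (\ref{2nd gen 1}) — namely $m+c-1\le b-c$, i.e.\ $b\ge 2c-1+m$ — your argument is essentially sound: Lemma \ref{lmk68}(1) applies, the diagonal factors $\binom{b-m-c}{l-m}$, $\binom{b-m}{c}$, $\binom{b-m}{j}$ are all units there, and the core matrix $\bigl(\binom{p-1-l'}{j'}\bigr)$ is invertible by Lemma \ref{invmt 1} (or Lemma \ref{grinberg}).

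The genuine gap is that this regime is a small corner of what the proposition asserts, and the part you defer in your final sentence is where essentially all of the difficulty lives. The proposition allows $c\le b\le p$ and every $1\le m\le c-1$; your $l$-window fits inside $[m,\,b-c]$ only when $b\ge 2c-1+m$, which already fails for $b=2c-1$, $m=1$, fails for all $m\ge 2$ unless $b\ge 2c+1$, and for $m=c-1$ requires $b\ge 3c-2$. Worse, when $m>b-c$ the interval $[m,b-c]$ is empty, Lemma \ref{lmk68}(1) never applies, and the factorization $B=D_1MD_2$ degenerates (e.g.\ $\binom{b-m}{c}\equiv 0$). The paper handles this by using a genuinely mixed system: integral relations from (\ref{1st gen 1}) for $l$ outside $[m,b-c]$ and divided relations from (\ref{2nd gen 1}) only for $l$ inside it, and then proving invertibility of the resulting block matrix case by case (Cases (i)--(iii), for $b\ge 2c-1$, $c\le b\le 2c-2$ with $m\le b-c+1$, and $c\le b\le 2c-3$ with $m\ge b-c+2$), using Lucas' theorem to identify zero and lower-triangular blocks and Lemma \ref{grinberg} for the remaining block. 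None of that is automatic — in Case (iii) the matrix is only shown to have full row rank after a $3\times 3$ block analysis — so "one expects to need to supplement the $l$-set and verify the expanded matrix remains invertible" is not a proof step but a restatement of the main problem. As written, the proposal proves the statement only under the additional hypothesis $b\ge 2c-1+m$.
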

\begin{proof}
We begin by noting that if $m\leq l\leq b-c$ then the coefficients of (\ref{1st gen 1}), ${{r-l}\choose b-m+j(p-1)}\equiv 0\bmod p$ for all $j$, due to which in some cases the matrices $A$ below are not invertible mod $p$.  So we use (\ref{2nd gen 1}) instead of (\ref{1st gen 1}) in this situation.

\textbf{Case (i)} $b\geq 2c-1$ and $1\leq m\leq c-1$\\
Let us consider the matrix $A = \left(a_{j,l}\right)$ over $\mathbb{Z}_p$ where
\[ a_{j,l} = 
\begin{cases}
{{r-l}\choose b-m+j(p-1)} & \text{if}\quad 0\leq j\leq c-1, \ 0\leq l\leq m-1\vspace*{2mm}\\
\frac{{{r-l}\choose b-m+j(p-1)}}{p} & \text{if}\quad 0\leq j\leq c-1, \ m\leq l\leq c-1.
\end{cases}
\]
Here we note that $m\leq c-1\leq b-c$, and so by Lemma \ref{coeff 5.1} and Lemma \ref{lmk68} we have 
\[ a_{j,l} \equiv
\begin{cases}
{{b-c-l}\choose b-m-j}{c\choose j} & \text{if}\quad 0\leq j\leq c-1, \ 0\leq l\leq m-1\vspace*{2mm}\\
\frac{(-1)^{l-m}{{b-m}\choose j}{{p-1+m-l}\choose c-1-j}}{{{b-c-m}\choose l-m}{{b-m}\choose c}} & \text{if}\quad 0\leq j\leq c-1, \ m\leq l\leq c-1.
\end{cases}
\]
Now let us write $A$ in blocks as follows
\begin{eqnarray}{}\label{Block 1 A}
A = \begin{pmatrix}
 A'& B'\\
 A''&B''
 \end{pmatrix} 
\end{eqnarray}
 where we divide the ranges of $l$ and $j$ into two non empty intervals  $[0,\ m-1],[m,\ c-1]$ and $[0,\ c-m-1], [c-m, \ c-1]$ respectively, determining the order of the blocks. \\
 \textbf{Subcase (i)} $0\leq l\leq m-1$ and $0\leq j\leq c-1$\\
 Here we observe that 
\begin{equation}{\label{binomial A'}}
{{b-c-l}\choose b-m-j}\equiv 0\bmod p\iff j<c-m+l.
\end{equation}
 Thus modulo $p$, ${A'}$ is zero (as $j\leq c-m-1$) and ${A''}$ is lower triangular with non zero diagonal entries given by ${c\choose j}$  (note: $j = c-m+l$ gives the diagonal).  Hence $A''$ is invertible.  \\
 \textbf{Subcase (ii)} $m\leq l\leq c-1$ and $0\leq j\leq c-m-1$\\
 In this case we note that $B'$ is invertible $\bmod\ p$ if and only if 
 $$B_1 = \left({{p-1+m-l}\choose c-1-j}\right)_{\substack{0\leq j\leq c-m-1\\ m\leq l\leq c-1}}$$
 is invertible mod $p$ since in this case ${{b-m}\choose j}, \ {{b-m-c}\choose l-m}$ and ${{b-m}\choose c}$ are all non zero mod $p$. Here we also note that $B_1$ is invertible $\bmod\ p$ if and only if
 $$B'_1 = \left({{p-c+m+l'-1}\choose p-c+l'-j'}\right)_{1\leq j',l'\leq c-m}$$
 is invertible $\bmod\ p$ since $B'_1$ is obtained by first putting $j' = c-m-j,  l' = c-l$ and then using ${{a}\choose b} = {{a}\choose a-b}$.  By Lemma \ref{grinberg}, we have
 $$\det(B'_1) = \frac{H(p-c)H(m)H(c-m)H(p)}{H(p-(c-m))H(c)H(p-m)}\not\equiv 0\bmod p.$$
 Therefore $A$ is invertible over $\mathbb{Z}_p$ as $A'$ is zero mod $p$ and both $A'', \ B'$ are invertible. Now for a fixed $ j''\in [0, \ c-1]$ let $\textbf{d}_{j''} =(d_0,d_1,...,d_{c-1})\in \mathbb{Z}^c_p$ be a vector such that $\textbf{d}_{j''} = A^{-1}\textbf{e}_{j''}$. Then by Proposition \ref{gen 1} we get 
$$(T-a_p)\left(\sum_{0\leq l\leq m-1}d_lf^l+\sum_{m\leq l\leq c-1}d_l\frac{f^l}{p}\right) = [g,\ x^{r-b+m-j''(p-1)}y^{b-m+j''(p-1)}]\bmod p$$
where $f^l$ are from Proposition \ref{gen 1}. 

\textbf{Case (ii)} $c\leq b\leq 2c-2 $ and $1\leq m\leq b-c+1$\\
In this case we consider $A = (a_{j,l})$ over $\mathbb{Z}_p$ where, 
\[ a_{j, l} = 
\begin{cases}
{{r-l}\choose b-m+j(p-1)} &\text{if}\quad 0\leq j\leq c-1, \ 0\leq l\leq m-1\quad \text{or}\quad b-c+1\leq l\leq c-1\vspace*{2mm}\\
\frac{{{r-l}\choose b-m+j(p-1)}}{p} &\text{if}\quad 0\leq j\leq c-1, \ m\leq l\leq b-c
\end{cases}
\]
By using Lemma \ref{coeff 5.1} and Lemma \ref{lmk68} we have 
\[a_{j,l}\equiv 
\begin{cases}
{{b-c-l}\choose b-m-j}{{c}\choose j} &\text{if}\quad 0\leq j\leq c-1, \ 0\leq l\leq m-1\vspace*{2mm}\\
\frac{(-1)^{l-m}{{b-m}\choose j}{{p-1+m-l}\choose c-1-j}}{{{b-m-c}\choose l-m}{{b-m}\choose c}} &\text{if}\quad 0\leq j\leq c-1, \ m\leq l\leq b-c\vspace*{1mm}\\
{{p+b-c-l}\choose b-m-j}{{c-1}\choose j} &\text{if}\quad 0\leq j\leq c-1, \ b-c+1\leq l\leq c-1.
\end{cases}
\]
Here we note that for $b-c+1\leq l\leq c-1$
$${{p+b-c-l}\choose b-m-j}{{c-1}\choose j} = {{b-m}\choose j}{{p-1+m-l}\choose c-1-j}\frac{(p+b-c-l)!(c-1)!}{(b-m)!(p-1+m-l)!}.$$
Now let 
\[\beta_l = 
\begin{cases}
\frac{(-1)^{l-m}}{{{b-m-c}\choose l-m}{{b-m}\choose c}} &\text{if}\quad m\leq l\leq b-c\vspace*{1mm}\\
\frac{(p+b-c-l)!(c-1)!}{(b-m)!(p-1+m-l)!} &\text{if}\quad b-c+1\leq l\leq c-1.
\end{cases}
\]
Therefore we have 
\[a_{j,l}\equiv 
\begin{cases}
{{b-c-l}\choose b-m-j}{{c}\choose j} &\text{if}\quad 0\leq j\leq c-1, \ 0\leq l\leq m-1\\
\beta_l {{b-m}\choose j}{{p-1+m-l}\choose c-1-j}  &\text{if}\quad 0\leq j\leq c-1, \ m\leq l\leq c-1.\\
\end{cases}
\]
Now we write $A$ as in (\ref{Block 1 A}) and observe that similar computation as in Case (i) above gives mod $p$: (i) $A'$ is zero, (ii) $A''$ is invertible and (iii) $B'$ is invertible (as $\beta_{l}$ are all units). Hence $A$ is invertible. 
Now for a fixed $0\leq j'' \leq c-1$ let $\textbf{d}_{j''} =(d_0,d_1,...,d_{c-1})\in \mathbb{Z}^c_p$ be a vector such that $\textbf{d}_{j''} = A^{-1}\textbf{e}_{j''}$. Then taking $f = \left(\sum_{0\leq l\leq c-1}d_l\frac{f^l}{p^\sigma}\right)$ in Proposition \ref{gen 1} we get the desired result where $ \sigma$ is $1$ if $m\leq l\leq b-c$ and $0$ otherwise.

\textbf{Case (iii)} $c\leq b\leq 2c-3$ and $b-c+2\leq m\leq c-1$\\
In this case we consider the following matrix  
$A = \left(a_{j,l}\right)_{0\leq j,l\leq c-1}$ where $a_{j,l} = {{r-l}\choose b-m+j(p-1)}$.\linebreak
By Lemma \ref{coeff 5.1}, we have 
\begin{eqnarray}{\label{bino case iii}}
a_{j,l} \equiv \begin{cases}
{{b-c-l}\choose b-m-j}{c\choose j} &\text{if}\ 0\leq j\leq b-m, \ 0\leq l\leq b-c\\
{{p+b-c-l}\choose b-m-j}{{c-1}\choose j} &\text{if}\ 0\leq j\leq b-m,\ b-c+1\leq l\leq c-1\\
{{b-c-l}\choose p+b-m-j}{c\choose j-1} &\text{if}\ b-m+1\leq j\leq c-1,\ 0\leq l\leq b-c\\
{{p+b-c-l}\choose p+b-m-j}{{c-1}\choose j-1} &\text{if}\ b-m+1\leq j\leq c-1,\ b-c+1\leq l\leq c-1.
\end{cases}
\end{eqnarray}
 Now let us write $A$ in blocks as
$$A = \begin{pmatrix}
 {A'}& {B'}& {C'}\\
 {A''}& {B''}& {C''}\\
 {A'''}&{B'''}&{C'''}
 \end{pmatrix}$$  
 where we divide the ranges of $l$ and $j$ into the non-empty intervals $[0,\ b-c],[b-c+1,\ m-1],[m, \ c-1]$ and $[0,\ c-m-1], [c-m, \ b-m], [b-m+1, \ c-1]$ respectively, which determine the order of the blocks. We now show that $A$ is invertible. Using (\ref{bino case iii}) and similar arguments as that of (\ref{binomial A'}) in Case (i) we deduce that modulo $p$: (i) $A'$,  $A'''$ and $C'''$ are zero and (ii) $A''$, $B'''$ are lower triangular with non-zero entries in the diagonal (hence invertible).  For $C'$ we have $m\leq l\leq c-1$ and $0\leq j\leq c-m-1$.
By Vandermonde's identity 

$$ {{p+b-c-l}\choose b-m-j}{{c-1}\choose j} = \sum_{0\leq l'\leq c-m-1}{{p+b-2c+1}\choose b-m-j-l'}{{c-1}\choose j}{{c-1-l}\choose l'}$$
whence ${C'}$ is a product of two matrices as follows:
$${C'}= \left({{p+b-2c+1}\choose b-m-j-l'}{{c-1}\choose j} \right)_{\substack{0\leq j\leq c-m-1\\ 0\leq l'\leq c-m-1}}\cdot\left({{c-1-l}\choose l'}\right)_{\substack{0\leq l'\leq c-m-1\\ m\leq l\leq c-1}}.$$

Observe that $\det\left({{c-1-l}\choose l'}\right)\not\equiv 0\bmod p$ since this matrix has $1$'s on the off diagonal and $0$'s below it.  Therefore to show that ${C'}$ is invertible, it suffices to show that $\left({{p+b-2c+1}\choose b-m-j-l'} \right)$ is invertible which is equivalent to showing $ \left({{p-c+b-c+1}\choose b-c+1+j'-l''}\right)$ is invertible. The second matrix is obtained by putting $j' = (c-m-j)$ row and $l'' =l'+1$. The latter is invertible mod $p$ by Lemma \ref{grinberg}.  Thus we have
 $$A\equiv\begin{pmatrix}
 \textbf{0}& {B'}& {C'}\\
 {A''}& {B''}& {C''}\\
 \textbf{0}&{B'''}&\textbf{0}
 \end{pmatrix}\bmod p$$  
 where ${A''},{B'''},{C'}$ are of full rank mod $p$, and so $A$ is also of full rank.  Taking $ f = \sum_{0\leq l\leq c-1}d_lf^l$ in Proposition \ref{gen 1} as before we obtain the required result. 
\end{proof}

\begin{prop}{\label{mono 1.2}}
Let $r= s+p^t(p-1)d$ with $p\nmid d$,  and $s =b+c(p-1)$ where $2\leq b\leq c-1\leq p-3$. If $t>\nu(a_p)+c$ and $\ 1\leq m<c<\nu(a_p)<p-1$.
\begin{enumerate}
\item If $1\leq m< b$ then the monomials $x^{r-b+m-j(p-1)}y^{b-m+j(p-1)}$ are in $\kerp$ for $0\leq j\leq b-m$ and $c-m\leq j\leq c-1$.\\
\item If $b\leq m\leq c-1$ then the monomials $x^{r-b+m-j(p-1)}y^{b-m+j(p-1)}$ are in $\kerp$ for $1\leq j\leq c-1$.
\end{enumerate}
\end{prop}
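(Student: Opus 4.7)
The plan is to follow the matrix-inversion strategy of Proposition \ref{mono 1}. For each $l \in [0, c-1]$, Proposition \ref{gen 1}(\ref{1st gen 1}) supplies a relation
\[
[g, \sum_{0<j<s-m,\ j\equiv s-m\ \bmod\ (p-1)} \binom{r-l}{j}\,x^{r-j}y^j] \in \kerp,
\]
and for part (2), where $b \leq m \leq l \leq p+b-c$ holds, the refined (\ref{2nd gen 1}) gives a stronger relation with coefficient $\binom{r-l}{j}/p$. Collect these relations into a matrix $A$ with rows indexed by $l$ and columns indexed by $j'$, where $j = b-m+j'(p-1)$; the goal is to show that each target $e_{j''}$ lies in the column image of $A$.

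For part (1) ($1 \leq m < b$), only (\ref{1st gen 1}) is available since the integrality hypothesis for (\ref{2nd gen 1}) demands $m \leq l \leq b-c$ or $b \leq m$, neither of which holds when $b \leq c-1$ and $m < b$. With $l$ in the range $[b-c+1, b-c+p]$ (handling the adjacent range of Lemma \ref{coeff 5.1} analogously when $c$ is close to $p$), the $c \times c$ matrix reduces modulo $p$ to $A_{j',l} \equiv \binom{p+b-c-l}{b-m-j'}\binom{c-1}{j'}$ for $j' \in [0, b-m]$ and $A_{j',l} \equiv \binom{p+b-c-l}{p+b-m-j'}\binom{c-1}{j'-1}$ for $j' \in [b-m+1, c-1]$. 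The middle rows $j' \in [b-m+1, c-m-1]$ vanish identically, since $\binom{p+b-c-l}{p+b-m-j'} \neq 0$ requires $l \leq j' - c + m$, impossible for $j' < c-m$ and $l \geq 0$. This structural obstruction is exactly why the middle indices drop out of the conclusion.

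To confirm each non-middle target lies in the column image, split columns into $l \in [0, m-1]$ and $l \in [m, c-1]$. The bottom rows ($j' \in [c-m, c-1]$) restricted to $l \in [0, m-1]$ are lower triangular (after reindexing by $j'' = j'-(c-m)$) with nonzero diagonal $\binom{c-1}{j'-1}$, hence invertible; restricted to $l \in [m, c-1]$ they are identically zero. The top rows ($j' \in [0, b-m]$) restricted to $l \in [m, c-1]$, after pulling out the nonzero factor $\binom{c-1}{j'}$, reduce to the matrix $(\binom{p+b-c-m-l'}{i})_{0 \leq i \leq b-m,\,0 \leq l' \leq c-m-1}$, which factors as a triangular change of basis times a Vandermonde matrix at the distinct points $l' = 0, 1, \ldots, b-m$, and thus contains an invertible $(b-m+1)\times(b-m+1)$ sub-block. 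For $j'' \in [0, b-m]$, take $\alpha$ supported on $l \in [m, c-1]$ so that the bottom output automatically vanishes and the top-right sub-block surjects onto $e_{j''}|_{\text{top}}$; for $j'' \in [c-m, c-1]$, first solve on $l \in [0, m-1]$ via the invertible bottom-left block, then adjust $\alpha$ on $[m, c-1]$ to cancel any spurious top contribution. Setting $f = \sum_l \alpha_l f^l$ and applying Proposition \ref{gen 1} places the targeted monomial in $\kerp$.

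Part (2) ($b \leq m \leq c-1$) is analogous but uses (\ref{1st gen 1}) for $l \in [0, m-1]$ together with the refined (\ref{2nd gen 1}) for $l \in [m, c-1]$, whose integrality hypothesis is now satisfied. Entries of the refined rows are computed via Lemma \ref{lmk68}, and these provide nonzero values in exactly those columns $j' \in [1, c-m-1]$ that would correspond to the vanishing middle positions under (\ref{1st gen 1}) alone. A similar block decomposition shows the resulting $(c-1) \times c$ matrix has full row rank $c-1$, placing every $e_{j''}$ for $j'' \in [1, c-1]$ in the image. The main obstacle throughout is the careful block bookkeeping: identifying exactly which sub-blocks vanish mod $p$, verifying invertibility of the others via Vandermonde factorizations or Lemma \ref{grinberg} (as in Case (iii) of Proposition \ref{mono 1}), and arranging that the two disjoint targets in part (1) are produced by $\alpha$-combinations with disjoint supports so as not to create spurious cross-entries in the other block.
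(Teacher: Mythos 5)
Your argument in the regime $2c-1-p\leq b\leq c-1$ (where every $l\in[0,c-1]$ lies in the single interval $b-c+1\leq l\leq b-c+p$ of Lemma \ref{coeff 5.1}) is essentially the paper's Cases (i)--(ii): the same block decomposition, the same triangularity coming from ${{p+b-c-l}\choose p+b-m-j'}\equiv 0 \iff j'<c-m+l$, and your Vandermonde factorization of the top block restricted to $l\in[m,c-1]$ is a legitimate substitute for the paper's appeal to Lemma \ref{grinberg}. The two-stage targeting (solve on $l\in[0,m-1]$, then correct the top rows on $l\in[m,c-1]$) works there because the middle and bottom rows genuinely vanish on the columns $l\geq m$.

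The genuine gap is the regime $2\leq b\leq 2c-2-p$ (nonempty for $p\geq 11$), which you dispose of as ``handled analogously when $c$ is close to $p$''. It is not analogous. There the column range crosses the second threshold of Lemma \ref{coeff 5.1}, and on the columns $l\in[p+b-c+1,\,c-1]$ the entries of the middle rows $j'\in[b-m+1,\,c-m-1]$ become ${{2p+b-c-l}\choose p+b-m-j'}{{c-2}\choose j'-1}$, which vanish only when $l>p+m-c+j'$; for $j'=c-m-1$ this requires $l>p-1$, which never happens since $l\leq c-1\leq p-3$. So the middle rows do \emph{not} vanish identically, your ``structural obstruction'' fails, and a combination $\alpha$ supported on $l\in[m,c-1]$ produces unwanted contributions on the middle monomials --- you would only conclude that the target \emph{plus} middle junk lies in $\kerp$. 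The paper's Cases (iii) and (v) must therefore abandon invertibility and instead show that each targeted row is not a linear combination of the remaining rows (via the invertible blocks $C'$ and $A'''$), so that $\mathrm{rank}[A|\textbf{e}_{j'}]=\mathrm{rank}(A)$ and the system $A\textbf{d}=\textbf{e}_{j'}$ is consistent with the middle coordinates forced to zero. Separately, for part (2) in this regime the integrality needed for (\ref{2nd gen 1}) requires $l\leq p+b-c<c-1$, so your blanket use of the divided relation for all $l\in[m,c-1]$ is unavailable and one must revert to (\ref{1st gen 1}) for $l>p+b-c$ (the paper's Case (iv)). These sub-cases need their own arguments; they are not obtained by relabeling the main one.
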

\begin{proof}
We begin by noting that if $b\leq m\leq l\leq p+b-c$ then coefficients of (\ref{1st gen 1}), ${{r-l}\choose b-m+j(p-1)}\equiv 0\bmod p$ for all $j$, due to which in some cases the matrix $A$ below is not invertible mod $p$.  So we use (\ref{2nd gen 1}) instead of (\ref{1st gen 1}) in this situation.

\textbf{Case (i)} $2c-1-p\leq b \leq c-1$ and $ 1 \leq m \leq b-1$\\
Now we consider the matrix $A = \left(a_{j,l}\right)$ over $\mathbb{Z}_p$, where 
\begin{eqnarray}{\label{defn A case 1}}
a_{j,l} = \begin{cases}
             {{r-l}\choose b-m+j(p-1)} & \text{if}\quad 0\leq j\leq b-m ,\ 0\leq l\leq b\\
             {{r-l}\choose b-m+(c-b-1+j)(p-1)} & \text{if}\quad b-m+1\leq j\leq b, \ 0\leq l\leq b.
             \end{cases}
\end{eqnarray}
 Now we write $A$ as a block matrix  $$A= \begin{pmatrix}
 A' & B'\\
 A'' & B''
 \end{pmatrix}$$
 where the ranges of $l$ and $j$ are divided into non-empty intervals $[0,\ m-1], [m, \ b]$ and $[0,\ b-m], \ [b-m+1, \ b]$ respectively,  determining the order of the blocks.  Now we analyse these block matrices in the following subcases.\\
\textbf{Subcase (i)} $A''$ and $B''$:\\
We consider the matrices
 \begin{eqnarray*}
A_1 = \left({{r-l}\choose b-m+j'(p-1)}\right)_{\substack{c-m\leq j'\leq c-1\\ 0\leq l\leq m-1}} \ \text{and} \ B_1 =  \left({{r-l}\choose b-m+j'(p-1)}\right)_{\substack{c-m\leq j'\leq c-1\\ m\leq l\leq b}} 
 \end{eqnarray*}
that are obtained from $A''$ and $B''$ respectively by putting $j' = c-b-1+j$. Now by Lemma \ref{coeff 5.1}, we have
$${{r-l}\choose b-m+j'(p-1)}\equiv {{p+b-c-l}\choose p+b-m-j'}{{c-1}\choose j'-1}\bmod p\quad\text{for}\quad c-m\leq j'\leq c-1$$
as $p+b-c\geq c-1$ and $b-m+1\leq c-m$.  Also,  note that
\begin{eqnarray}{\label{bino LT}}
 {{p+b-c-l}\choose p+b-m-j'} \equiv 0\bmod p \iff j'<c-m+l.
\end{eqnarray}
Therefore modulo $p$, $A''$ is invertible (being lower triangular with non-zero diagonal entries) and $B''$ is zero.  \\
\textbf{Subcase (ii)} $B'$ is invertible:\\
Lemma \ref{coeff 5.1} gives us $$B'\equiv \left({{p+b-c-l}\choose b-m-j}{{c-1}\choose j}\right)_{\substack{0\leq j\leq b-m\\m\leq l\leq b}}\bmod p.$$
Hence $B'$ is invertible mod $p$ iff $$\left({{p-c+l'-1}\choose p-c+l'-j'}\right)_{1\leq j',l'\leq b-m+1}$$is invertible mod $p$. The latter obtained by putting $j' = b-m-j+1$ and $l' = b-l+1$ and using ${a\choose b} = {a\choose a-b}$.  By Lemma \ref{grinberg} the determinant of this matrix is $1$, and thus $B'$ is invertible. \\

 From above it follows that $A$ is invertible.  Now for a fixed $0\leq j''\leq b$ let $\textbf{d}_{j''} =(d_0,d_1,...,d_{b})\in \mathbb{Z}^{b+1}_p$ be a vector such that $\textbf{d}_{j''} = A^{-1}\textbf{e}_{j''}$. Hence we have following system of equations
\begin{eqnarray}{\label{mono 1.2 eqn 1}}
\underset{0\leq l\leq b}\sum d_l  {{r-l}\choose b-m+j(p-1)} = \begin{cases}
1 \quad\text{if}\quad j = j'', \ 0\leq j\leq b-m\\
0\quad\text{if}\quad j\not= j'', \ 0\leq j\leq b-m
\end{cases}
\end{eqnarray}

\begin{eqnarray}{\label{mono 1.2 eqn 3}}
\underset{0\leq l\leq b}\sum d_l   {{r-l}\choose b-m+j'(p-1)}= \begin{cases}
1 \quad\text{if}\quad j' = j''+c-1-b, \ c-m\leq j'\leq c-1\\
0\quad\text{if}\quad j'\not= j''+c-1-b, \ c-m\leq j'\leq c-1.\\
\end{cases}
\end{eqnarray}
by putting $j' = c-1-b+j$ in (\ref{defn A case 1}).
Now we observe that Proposition \ref{gen 1} gives \\
$(T-a_p)\left(\sum_{0\leq l\leq b}d_lf^l\right) = \left[g, \ \underset{0\leq j\leq b-m}\sum\ \ \underset{0\leq l\leq b}\sum d_l   {{r-l}\choose b-m+j(p-1)}x^{r-b+m-j(p-1)}y^{b-m+j(p-1)}\right]\vspace*{2mm}$\\
$\hspace*{11em}+ \left[g, \ \underset{c-m\leq j'\leq c-1}\sum\ \ \underset{0\leq l\leq b}\sum d_l   {{r-l}\choose b-m+j'(p-1)}x^{r-b+m-j'(p-1)}y^{b-m+j'(p-1)}\right]$\vspace*{2mm}\\
where $f^l$ are as in Proposition \ref{gen 1},  observing that the sum for $b-m+1\leq j\leq c-m-1$ vanishes mod $p$ (since ${{r-l}\choose b-m+j(p-1)}\equiv 0\bmod p$ by Lemma \ref{coeff 5.1} together with $j<c-m$). 
Further, using (\ref{mono 1.2 eqn 1}) and (\ref{mono 1.2 eqn 3}) we have
$$(T-a_p)\left(\sum_{0\leq l\leq b}d_lf^l\right)\equiv [g,\ x^{r-b+m-i(p-1)}y^{b-m+i(p-1)}]$$
for $0\leq i\leq b-m$ or $c-m\leq i\leq c-1$.

\textbf{Case (ii)} $2c-1-p\leq b\leq c-1$ and $b\leq m \leq c-1$ \\
In this case we consider the matrix $A = (a_{j,l})$ over $\mathbb{Z}_p$ where
\[a_{j,l} = 
\begin{cases}
{{r-l}\choose b-m+j(p-1)} &\text{if}\quad 1\leq j\leq c-1, \ 0\leq l\leq  m-1 \vspace{2mm}\\
\frac{{{r-l}\choose b-m+j(p-1)}}{p} &\text{if}\quad 1\leq j\leq c-1, \ m\leq l\leq c-2.
\end{cases}
\]
Since $b-m+1\leq 1\leq j\leq c-1\leq p+b-m$ and $b\leq m\leq p+b-c$, then by Lemma \ref{coeff 5.1} and Lemma \ref{lmk68} we have 
\[a_{j,l}\equiv 
\begin{cases}
{{p+b-c-l}\choose p+b-m-j}{{c-1}\choose j-1} &\text{if}\quad 1\leq j\leq c-1, \ 0\leq l\leq m-1\vspace{2mm}\\
\frac{(-1)^{l-m}{{p+b-m-1}\choose j-1}{{p-1+m-l}\choose c-1-j}}{{{p+b-c-m}\choose l-m}{{p+b-m-1}\choose c-1}} &\text{if}\quad 1\leq j\leq c-1, \ m\leq l\leq c-2.
\end{cases}
\]
If $m\leq c-2$ then we can write $A$ as follows 
\begin{eqnarray}{\label{Block 2 A}}
A \equiv \begin{pmatrix}
 A'& B'\\
 A''&B''
 \end{pmatrix}\bmod p 
\end{eqnarray}
 where the ranges of $l$ and $j$ are divided into non-empty intervals $[0,\ m-1],[m, \ c-2]$ and $[1,c-m-1], [c-m,  c-1]$ respectively,  determining the order of the blocks.  If $m = c-1$ then we observe that $A = A''$ as $c-m = 1$ and $m-1 = c-2$.  Following the same argument given in Case (i) of Proposition \ref{mono 1} we see that $A'', \ B'$ are invertible mod $p$ and $A'$ is zero mod $p$. Thus, $A\in\mathrm{GL}_{c-1}\left(\mathbb{Z}_p\right)$ in both the cases.
Now for a fixed $1\leq j'\leq c-1$ let $\textbf{d}_{j'} =(d_0,d_1,...,d_{c-2})\in \mathbb{Z}^{c-1}_p$ be a vector such that $\textbf{d}_{j'} = A^{-1}\textbf{e}_{j'}$.  Taking $f = \sum_{0\leq l\leq c-2}d_l\frac{f^l}{p^\sigma}$ in Proposition \ref{gen 1} we get the required result, where $ \sigma$ is $1$ if $m\leq l\leq c-2$ and $0$ otherwise.

\textbf{Case (iii)} $2\leq b\leq 2c-2-p$ and $1 \leq m \leq b-1$\\
In this case we consider the following matrix $$A = (a_{j,l}) = \left({{r-l}\choose b-m+j(p-1)}\right)_{\substack{0\leq j\leq c-1\\ 0\leq l\leq c-1}}.$$
By Lemma \ref{coeff 5.1},  we have
\[ {{r-l}\choose b-m+j(p-1)}\equiv  
\begin{cases}
{{p+b-c-l}\choose b-m-j}{{c-1}\choose j} & \text{if}\quad 0\leq j\leq b-m, \ 0\leq l\leq p+b-c\vspace{1 mm}\\
{{2p+b-c-l}\choose b-m-j}{{c-2}\choose j} & \text{if}\quad 0\leq j\leq b-m, \ p+b-c+1\leq l\leq c-1\vspace{1 mm}\\
{{p+b-c-l}\choose p+b-m-j}{{c-1}\choose j-1} & \text{if}\quad b-m+1\leq j\leq c-1, \ 0\leq l\leq p+b-c\vspace{1 mm}\\
{{2p+b-c-l}\choose p+b-m-j}{{c-2}\choose j-1} & \text{if}\quad  b-m+1\leq j\leq c-1, \ p+b-c+1\leq l\leq c-1.\\
\end{cases}
\]
Now we write $A$ in blocks as follows $$A = \begin{pmatrix}
 A'& B'& C'&D'\\
 A''&B''&C''&D''\\
 A'''&B'''&C'''&D'''
\end{pmatrix}$$
where the ranges of $l$ and $j$ are divided into non-empty intervals $[0, \ m-1], \ [m, \ p-c+m-1], \ [p-c+m, \ p+b-c], \ [p+b-c+1, \ c-1]$ and $[0,\ b-m], \ [b-m+1, \ c-m-1], \ [c-m, \ c-1]$ respectively,  determining the order of the blocks. We refer to the argument using (\ref{bino LT}) in Case (i) to deduce that modulo $p$: (i) $A'''$ and $C'$ are invertible lower triangular and (ii) $A''$, $B''$, $C''$,  $B'''$, and $C'''$ are all zero.  Therefore we have 
 $$A\equiv\begin{pmatrix}
 A'& B'& C'&D'\\
 \textbf{0}& \textbf{0}& \textbf{0}&D''\\
 A'''& \textbf{0}& \textbf{0}&D'''
\end{pmatrix}\bmod p.$$
Now we observe that for $0\leq j\leq b-m$ or $c-m\leq j\leq c-1$, the $j^{th}$ row can not be written as a linear combination of rest of the rows because $C'$ and $A'''$ are invertible mod $p$.  So for a fixed $0\leq j'\leq b-m$ or $c-m\leq j'\leq c-1$, we claim there is a vector $\textbf{d}_{j'} = (d_0,\ d_1,\ ...,d_{c-1})\in\mathbb{Z}_p$ such that $A\cdot\textbf{d}_{j'} = \textbf{e}_{j'} \bmod p$. This is because modulo $p$, the row rank of the augmented matrix $[A|\textbf{e}_{j'}]$ is equal to the row rank of $A$. As before we invoke Proposition \ref{gen 1} to prove our claim. 

\textbf{Case (iv)} $2\leq b\leq 2c-2-p$ and $ b \leq m \leq p+b-c +1$\\
Here we consider the matrix $A = (a_{j,l})$ over $\mathbb{Z}_p$ where
\[a_{j,l} = 
\begin{cases}
{{r-l}\choose b-m+j(p-1)} &\text{if}\quad 1\leq j\leq c-1, \ 0\leq l\leq m-1, \ p+b-c+1\leq l\leq c-2\vspace{2mm}\\
\frac{{{r-l}\choose b-m+j(p-1)}}{p} &\text{if}\quad 1\leq j\leq c-1, \ m\leq l\leq p+b-c.

\end{cases}
\]
By Lemma \ref{coeff 5.1} and Lemma \ref{lmk68}
\[a_{j,l} \equiv 
\begin{cases}
{{p+b-c-l}\choose p+b-m-j}{{c-1}\choose j-1} &\text{if}\quad 1\leq j\leq c-1, \ 0\leq l\leq m-1\vspace{2mm}\\
\frac{(-1)^{l-m}{{p+b-m-1}\choose j-1}{{p-1+m-l}\choose c-1-j}}{{{p+b-c-m}\choose l-m}{{p+b-m-1}\choose c-1}} &\text{if}\quad 1\leq j\leq c-1, \ m\leq l\leq p+b-c\vspace{1mm}\\
{{2p+b-c-l}\choose p+b-m-j}{{c-2}\choose j-1} &\text{if}\quad 1\leq j\leq c-1, \ p+b-c+1\leq l\leq c-2.
\end{cases}
\]
Here we note that 
$${{2p+b-c-l}\choose p+b-m-j}{{c-2}\choose j-1} = {{p+b-m-1}\choose j-1}{{p-1+m-l}\choose c-1-j}\frac{(2p+b-c-l)!(c-2)!}{(p-1+m-l)!(p+b-m-1)!}.$$
 \[\implies a_{j,l} \equiv 
\begin{cases}
{{p+b-c-l}\choose p+b-m-j}{{c-1}\choose j-1} &\text{if}\quad 1\leq j\leq c-1, \ 0\leq l\leq m-1\vspace{1mm}\\
\beta_l{{p+b-m-1}\choose j-1}{{p-1+m-l}\choose c-1-j} &\text{if}\quad 1\leq j\leq c-1, \ m\leq l\leq c-2
\end{cases}
\]
where
\[\beta_l = 
\begin{cases}
\frac{(-1)^{l-m}}{{{p+b-c-m}\choose l-m}{{p+b-m-1}\choose c-1}} & \text{if}\quad m\leq l\leq p+b-c\vspace{1mm}\\
\frac{(2p+b-c-l)!(c-2)!}{(p-1+m-l)!(p+b-m-1)!} &\text{if}\quad p+b-c+1\leq l\leq c-2.
\end{cases}
\]
\vspace{2mm}

Now,  proceeding as in Case (ii) of Proposition \ref{mono 1} one shows that $A$ is invertible mod $p$.  So for a fixed $1\leq j''\leq c-1$ let $\textbf{d}_{j''} =(d_0,d_1,...,d_{c-2})\in \mathbb{Z}^{c-1}_p$ be a vector such that $\textbf{d}_{j''} = A^{-1}\textbf{e}_{j''}$. Taking $f = \sum_{0\leq l\leq c-2}d_l\frac{f^l}{p^\sigma}$ in Proposition \ref{gen 1} we get the required result, where $ \sigma$ is $1$ if $m\leq l\leq p+b-c$ and $0$ otherwise.
 
 \textbf{Case (v)} $2\leq b\leq 2c-3-p$  and $p+b-c+2\leq m\leq c-1$ \\
 We note that Case (iv) exhausts all the values of $m$ if $p+b-c = c-2$. Consider the following matrix 
 $$A = (a_{j,l}) = \left({{r-l}\choose b-m+j(p-1)}\right)_{\substack{1\leq j\leq c-1\\ 0\leq l\leq c-2}}.$$
By Lemma \ref{coeff 5.1},  we have
\[ {{r-l}\choose b-m+j(p-1)} = 
\begin{cases}
{{p+b-c-l}\choose p+b-m-j}{{c-1}\choose j-1} & \text{if}\quad 1\leq j\leq p+b-m, \ 0\leq l\leq p+b-c\vspace{1mm}\\
{{2p+b-c-l}\choose p+b-m-j}{{c-2}\choose j-1} & \text{if}\quad 1\leq j\leq p+b-m, \ p+b-c+1\leq l\leq c-2\vspace{1mm}\\
{{p+b-c-l}\choose 2p+b-m-j}{{c-1}\choose j-2} & \text{if}\quad p+b-m+1\leq j\leq c-1, \ 0\leq l\leq p+b-c\vspace{1mm}\\
{{2p+b-c-l}\choose 2p+b-m-j}{{c-2}\choose j-2} & \text{if}\quad  p+b-m+1\leq j\leq c-1, \ p+b-c+1\leq l\leq c-2.
\end{cases}
\]
If $m\leq c-2$ then $A$ can be written as follows
$$A = \begin{pmatrix}
 A'& B'& C'\\
 A''&B''&C''\\
 A'''&B'''&C'''
\end{pmatrix}$$
where the ranges of $l$ and $j$ are divided into non-empty intervals $[0, \ p+b-c], \ [p+b-c+1, \ m-1], \ [m, \ c-2]$ and $[1,\ c-m-1], \ [c-m, \ p+b-m], \ [p+b-m+1, \ c-1]$ respectively,  determining the order of the blocks. For $m = c-1$, $A$ is given by only the blocks $A''$, $B''$, $A'''$ and $B'''$ above.  By a similar argument in Case (i) above using (\ref{bino LT}),  one shows that modulo $p$: (i) $A''$, $B'''$ are invertible lower triangular, (ii) $A'$, $A'''$ and $C'''$ are zero. 
 
Next, observe that $C'$ is invertible mod $p$ if same holds for the matrix
$$C_1' = \left({{2(p-c)+b+2+l'-1}\choose p-c+1+l'-j'}\right)_{1\leq j', l'\leq c-m-1}.$$
The latter is obtained by putting $j' = c-m-j,\ l' = c-1-l$ and using the identity ${M\choose N} = {M\choose M-N}$.  By Lemma \ref{grinberg},  we deduce that $C_{1}'$ is invertible mod $p$.  Hence if $m\leq c-2$ then 
$$A\equiv  \begin{pmatrix}
 \textbf{0}& B'& C'\\
 A''&B''&C''\\
 \textbf{0}&B'''&\textbf{0}
\end{pmatrix}$$
where $C', \ A''$ and $B'''$ are invertible mod $p$. This gives in both cases including $m = c-1$ that $A$ is invertible mod $p$ (as $A \bmod p$ is of full row rank). Finally,  by the usual arguments using Proposition \ref{gen 1} we obtain the required result. 
\end{proof}

\begin{prop}{\label{gen 2}}
Let $r = s+p^t(p-1)d $, $\  s=b+c(p-1)$ such that $p\nmid d$, $2\leq b\leq p$ and $0\leq c\leq p-2$.  Fix $a_p$ such that $s>2\nu(a_p)$ and $c<\nu(a_p)<\text{min}\{\frac{p}{2}+c-\epsilon,\ p-1\}$ where $\epsilon$ is defined as in (\ref{dfn epsilon}).  Further assume that $t\geq 2\nu(a_p)$ if $b\geq 2c-1$ and $t>2\nu(a_p)+\epsilon-1$ if $b\leq 2c-2$.  Let $m$ be such that $1 \leq c+1-\epsilon \leq m\leq\lfloor{\nu(a_p)}\rfloor$  and $(b,c,m) \not= (p,0,1)$.  \\
(i) If $(b, m)\not= (2c-p+1, \ c)$ then for $0\leq l<m-\nu({{r-l}\choose{r-m}})$ there exists $f^l\in \indkg(\symqp)$ such that 
$$(T-a_p)(f^l)\equiv \frac{p^m}{a_p}\left[g^0_{1,0},\underset{\substack{c<j<s-m\\j\equiv (r-m)\bmod (p-1)}}\sum\frac{{{r-l}\choose j}}{{{r-l}\choose {r-m}}} x^{r-j}y^j \right] + \left[ g^0_{2,0},F_m(x,y)\right].$$
(ii)  If $(b, m) = (2c-p+1, \ c)$ then for $0\leq l<m-\nu({{r-l}\choose{r-m}})$ there exists $f^l\in \indkg(\symqp)$ such that 
$$(T-a_p)(f^l)\equiv \frac{p^m}{a_p}\left[g^0_{1,0},\underset{\substack{0\leq j<s-m\\j\equiv (r-m)\bmod (p-1)}}\sum\frac{{{r-l}\choose j}}{{{r-l}\choose {r-m}}} x^{r-j}y^j \right] + \left[ g^0_{2,0},F_m(x,y)\right].$$
\end{prop}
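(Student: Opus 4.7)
The plan is to mimic the template of Proposition \ref{gen 1}, replacing the single level-$1$ output by a combination of a clean level-$2$ piece (carrying $F_m(x,y)$) and a level-$1$ tail sum. Concretely, I would construct $f^l$ as a sum of three auxiliary functions
$$f^l = f'_{3,l} + f'_{2,l} + f'_{1,l},$$
where $f'_{3,l}$ is supported on matrices $g^0_{2,p\lambda}$ for $\lambda \in I_1^\ast$, $f'_{2,l}$ is a single function at $g^0_{2,0}$, and $f'_{1,l}$ is supported at $g^0_{1,0}$. The key rescaling is to multiply the natural analogues of $f_{3,l}$ and $f_{1,l}$ from Proposition \ref{gen 1} by the factor $\dfrac{p^m}{a_p\binom{r-l}{r-m}}$ so that the level-$1$ output inherits exactly this overall coefficient, and to take
$$f'_{2,l} = -\frac{1}{a_p}\bigl[g^0_{2,0},\, F_m(x,y)\bigr],$$
so that the contribution $-a_p f'_{2,l}$ produces precisely the clean level-$2$ summand $[g^0_{2,0},\, F_m(x,y)]$ on the right-hand side.

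The computation then proceeds in direct parallel with Proposition \ref{gen 1}. One shows that $T^+(f'_{3,l})$ is a level-$3$ sum whose coefficients vanish modulo $p$ using the bounds on $t$ and $\nu(a_p)$ (the condition $t \geq 2\nu(a_p)$ when $b \geq 2c-1$, and the sharper $t > 2\nu(a_p) + \epsilon - 1$ when $b \leq 2c-2$); that $T^-(f'_{3,l})$ produces the target level-$1$ sum over a long range of $j$ with the desired prefactor; that $T^+(f'_{2,l})$ is a level-$3$ sum that also vanishes modulo $p$; and that $T^-(f'_{2,l})$ is a level-$1$ error that cancels against part of $T^-(f'_{3,l})$. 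The role of $f'_{1,l}$ is to eliminate the high-$j$ tail (i.e.\ $j \geq s-m$) of the level-$1$ sum, exactly as $f_{1,l}$ does in Proposition \ref{gen 1}; this step appeals to Lemma \ref{srjm}, specifically to the bound $\nu(S_{r,j,l,m}) \geq t-(c-1)$, combined with the hypothesis on $t$.

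The dichotomy between cases (i) and (ii) concerns the low-$j$ portion $0\leq j\leq c$ of the level-$1$ output. Outside the edge case $(b,m) = (2c-p+1,\, c)$, these coefficients have strictly positive $p$-adic valuation once one accounts for $p^m/(a_p\binom{r-l}{r-m})$ together with the vanishing of $\binom{r-l}{j}$ modulo the appropriate power of $p$ (via Lemma \ref{rk3.15} and the standing constraint $l < m-\nu(\binom{r-l}{r-m})$), so these terms drop and the sum begins at $j>c$; in the edge case $(b,m) = (2c-p+1,\, c)$ the valuation balance is just off by one, the low-$j$ terms survive, and the sum extends down to $0\leq j<s-m$.

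The principal difficulty is the simultaneous control of the two spurious level-$3$ contributions together with the boundary $T^\pm$ cancellations at level $1$. One must choose the scalars in $f'_{1,l}$ with enough precision that $T^+(f'_{1,l})$ is killed modulo $p$ by the hypothesis $\nu(a_p) < p/2 + c - \epsilon$, while $T^-(f'_{1,l})$ exactly matches the unwanted high-$j$ tail produced by $T^-(f'_{3,l})$. The valuation bookkeeping is analogous to, but genuinely more delicate than, that of Proposition \ref{gen 1}, because the extra factor $\dfrac{p^m}{\binom{r-l}{r-m}}$ uniformly lowers the valuation of all coefficients, and the condition $l < m - \nu(\binom{r-l}{r-m})$ is precisely what keeps the resulting expressions integral after the cancellations. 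Case (ii) is then identified by tracing through Lemma \ref{srjm} in the regime where the stronger divisibility used for case (i) just barely fails.
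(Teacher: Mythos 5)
Your decomposition $f^l = f'_{3,l}+f'_{2,l}+f'_{1,l}$ reproduces three of the four auxiliary functions in the paper's proof (its $f_3$, $f_2$, $f_1$, with exactly the rescaling by $p^m/(a_p\binom{r-l}{r-m})$ you describe), and your accounting of $T^{\pm}$ for those three pieces is sound. But there is a genuine gap: you are missing a fourth function, and the claim you use in its place is false. After $f'_{1,l}$ removes the tail $s-m\leq j<r-m$, the level-$1$ output of $T^-(f'_{3,l})$ still contains the terms with $0\leq j\leq c$, $j\equiv r-m\bmod(p-1)$; concretely this is the single monomial $x^{r-j_0}y^{j_0}$ with $j_0=b-m$ or $j_0=b-m+p-1$ whenever that value lies in $[0,c]$, carrying the coefficient $p^{m}\binom{r-l}{j_0}/\bigl(a_p\binom{r-l}{r-m}\bigr)$. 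You assert this coefficient has strictly positive valuation and so the term drops. It does not in general: its valuation is $m-\nu(a_p)+\nu\bigl(\tbinom{r-l}{j_0}\bigr)-\nu\bigl(\tbinom{r-l}{r-m}\bigr)$, and since $m\leq\lfloor\nu(a_p)\rfloor$ the first difference is $\leq 0$ (it is $0$ when $\nu(a_p)=m$), while Lemma \ref{rk3.15} only gives $\nu\bigl(\tbinom{r-l}{j_0}\bigr)\leq 1$; the coefficient can be a unit.

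The paper handles this by introducing a fourth function $f_0=\bigl[1,\ \tfrac{p^{2m-b}\binom{r-l}{b-m}}{a_p\binom{r-l}{r-m}}F_{s-b+m}(x,y)\bigr]$ (resp.\ the analogous expression with $b-m+p-1$ in place of $b-m$), chosen so that $T^+(f_0)$ produces exactly $-\tfrac{p^m\binom{r-l}{j_0}}{a_p\binom{r-l}{r-m}}x^{r-j_0}y^{j_0}$ at level $1$ while $T^-(f_0)\equiv 0$ and $a_pf_0\equiv 0$. Verifying these three vanishing statements is where Lemma \ref{rk3.15}(A),(B) and the hypotheses $s>2\nu(a_p)$ and $\nu(a_p)<\tfrac{p}{2}+c-\epsilon$ do real work, and it is the failure of $a_pf_0\equiv 0\bmod p$ precisely at $(b,m)=(2c-p+1,\,c)$ that forces the low-$j$ term to remain in the statement of part (ii). So the case dichotomy is governed by the admissibility of this correction function, not by a near-miss in the valuation of the low-$j$ coefficients themselves. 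Without constructing $f_0$ and carrying out these estimates, the proof of part (i) is incomplete.
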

\begin{rk}{\label{rk 4.5}}
 The set $\left[0, \ m-\nu\left({{r-l}\choose r-m}\right)\right)\not =\phi$ as long as $(b,c,m) \not= (p, 0, 1)$ and $m\geq c+1-\epsilon$.  Hence in the above proposition, $l = 0$ always satisfies the condition $0\leq l<m-\nu\left({{r-l}\choose r-m}\right)$.
\end{rk}
\begin{proof}
We consider the following functions 
\begin{eqnarray*}
 f_3 &=&  \sum_{\lambda\in I^*_1} f_{3, \lambda}  = \sum_{\lambda\in I^*_1}\left[ g^0_{2,p\lambda}, \left(\frac{p}{\lambda}\right)^{m-l}\frac{F_l(x,y)}{(p-1){{r-l}\choose r-m}a_p}\right]\\
f_2 &=& \left[ g^0_{2,0}, \frac{-F_m(x,y)}{a_p}\right]\\
f_1 &=& \left[ g^0_{1,0}, \frac{p^m}{a^2_p}\underset{\substack{s-m\leq j<r-m\\ j\equiv (r-m)\bmod (p-1)}}\sum\frac{{{r-l}\choose j}}{{{r-l}\choose r-m}}x^{r-j}y^j\right]\\
f_0 &=& \begin{cases}
                 \left[ 1, \ \frac{p^{2m-b}{{r-l}\choose b-m}}{a_p{{r-l}\choose r-m}}F_{s-b+m}(x,y)\right] &\text{if}\ 0\leq b-m\leq c <b-m+p-1\vspace{2mm}\\
                 \left[ 1,\ \frac{p^{2m-b-(p-1){{r-l}\choose b-m+p-1}}}{a_p{{r-l}\choose r-m}}F_{s-(b-m+p-1)}(x,y)\right] &\text{if}\ b-m+p-1\leq c,\ (b,\ m) \not= (2c-p+1, \ c)\vspace{2mm}\\
                 0 & \text{otherwise.}
                \end{cases}
\end{eqnarray*}

First we note that $\nu\left({{r-l}\choose r-m}\right)\leq 1$ by Lemma \ref{rk3.15} which is used throughout the proof.  Now we compute $T^+$ and $T^-$ of the functions above.
\begin{eqnarray*}
T^+(f_2) &=& -\sum_{\lambda\in I^*_1}\left[ g^0_{3,p^2\lambda} \sum_{0\leq j\leq s-m}\frac{p^j(-\lambda)^{r-m-j}}{a_p}\left( {{r-m}\choose j}-{{s-m}\choose j}\right)x^{r-j}y^j\right]\hspace{12em}\\
&&\hspace*{6em}- \sum_{\lambda\in I_1}\left[ g^0_{3,p^2\lambda}\sum_{s-m+1\leq j\leq r-m}\frac{p^j{{r-m}\choose j}(-\lambda)^{r-m-j}}{a_p}x^{r-j}y^j\right]\hspace{12em}\\
&&\hspace*{19em}+\left[ g^0_{3,0}, \frac{p^{s-m}}{a_p}x^{r-s+m}y^{s-m}\right].
\end{eqnarray*}

We estimate the valuation of the coefficients in the above three sums labelled (I), (II) $\&$ (III).  For (I), $j+t-\nu(j!)-\nu(a_p)\geq t-\nu(a_p)>0$. For (III), $s-m-\nu(a_p)\geq s-2\nu(a_p)>0$. For (II), $j-\nu(a_p)\geq s-m+1-\nu(a_p)>0$,  giving us that $T^+(f_2)\equiv\ 0\bmod p$.  Now observe that for $T^{+} (f_{3, \lambda})$ we obtain three similar sums as above.  Therefore,  using the calculations above together with the assumption that $l<m-\nu\left({{r-l}\choose r-m}\right)$ allows us to see that the first two sums in $T^{+} (f_{3, \lambda})$ are also zero mod $p$.  Moreover, the last sum is also zero since $s-l-\nu(a_p)+m-l-\nu\left({{r-l}\choose r-m}\right)>s- m - \nu(a_p)>0$. This gives that $T^+(f_3)\equiv\ 0\bmod p$.

\begin{eqnarray*}
T^{-}(f_{3, \lambda})
 &=& \left[ g^0_{1,0}, \sum_{0\leq j\leq r-l}\frac{p^m{{r-l}\choose j}}{a_p(p-1){{r-l}\choose r-m}}\lambda^{r-m-j}x^{r-j}y^j\right]\\
&&- \left[ g^0_{1,0}, \sum_{0\leq j\leq s-l}\frac{p^{r-s+m}{{s-l}\choose j}}{a_p(p-1){{r-l}\choose r-m}}\lambda^{s-m-j}x^{r-j}y^j\right]\\
\implies\ \ T^-(f_3)&\equiv & \left[ g^0_{1,0}, \underset{\substack{0\leq j\leq r-l\\ j\equiv (r-m)\bmod (p-1)}}\sum\frac{p^m{{r-l}\choose j}}{a_p{{r-l}\choose r-m}}x^{r-j}y^j\right]
\end{eqnarray*}
 as $r-s+m-\nu(a_p)-\nu\left({{r-l}\choose r-m}\right)>0$.  Also,
\begin{eqnarray*}
T^-(f_2) &=& -\left[ g^0_{1,0},\frac{p^m}{a_p}x^my^{r-m}\right] +\left[ g^0_{1,0}, \frac{p^{r-s+m}}{a_p}x^{r-s+m}y^{s-m}\right]\vspace{3mm}\\
&\equiv & -\left[ g^0_{1,0},\frac{p^m}{a_p}x^my^{r-m}\right]\quad\text{as}\ r-s+m-\nu(a_p)>0.
\end{eqnarray*}
Now,
\begin{align*}
\hspace{3em} T^+(f_1)\  &=& \sum_{\lambda\in I^*_1}\left[g^0_{2,p\lambda},\sum_{0\leq j<r-m}\frac{p^{j+m}(-\lambda)^{r-m-j}}{a^2_p{{r-l}\choose r-m}}\underset{\substack{s-m\leq i<r-m\\i\equiv (r-m)\bmod (p-1)}}\sum{{r-l}\choose i}{i\choose j}x^{r-j}y^j\right]\\
&&+\left[g^0_{2,0}, \underset{\substack{s-m\leq j<r-m\\j\equiv (r-m)\bmod (p-1)}}\sum\frac{p^{j+m}}{a^2_p{{r-l}\choose r-m}}{{r-l}\choose j}x^{r-j}y^j\right].
\end{align*}
\\
Now we estimate the valuation of the coefficients in the above two sums (I) $\&$ (II).  For (II), when $j= s-m,$ using Lemma \ref{rk3.15} we note that the valuation is $s-2\nu(a_p)>0$.  When $j\geq s-m+1$ the valuation is at least $j+m-2\nu(a_p)-\nu\left({{r-l}\choose r-m}\right)\geq s-2\nu(a_p)+1-\nu\left({{r-l}\choose r-m}\right)>0$ as $\nu\left({{r-l}\choose r-m}\right)\leq 1$.  For (I) observe that the first summation truncates to $j\leq s-m$ by above calculation. Therefore for $0\leq j\leq s-m\leq s-l$, using (\ref{dfnsrjm}) we have 
$$T^+(f_1) \equiv \sum_{\lambda\in I^*_1}\left[g^0_{2,p\lambda},\sum_{0\leq j\leq s-m}\frac{p^{j+m}(-\lambda)^{r-m-j}}{a^2_p{{r-l}\choose r-m}}S_{r,j,l,m}\ x^{r-j}y^j\right]. $$
\\
For $c= 0$,  Lemma \ref{srjm} gives $\nu(S_{r,j,l,m})\geq t$ and so
$$j+m+t-2\nu(a_p)-\nu\left({{r-l}\choose r-m}\right)\geq m-\nu\left({{r-l}\choose r-m}\right) +t-2\nu(a_p)>0$$
since $m-{{r-l}\choose r-m}>0$ and $t\geq 2\nu(a_p)$.  Now for $c\geq 1$,  $p^{t-c+1}|S_{r,j,l,m}$ by Lemma \ref{srjm} ($c+m\geq 2$ holds in this case). Therefore, the valuation of the coefficients is at least
$$j+m+t-c+1-2\nu(a_p)-\nu\left({{r-l}\choose r-m}\right)\geq 1-\nu\left({{r-l}\choose r-m}\right) +t-2\nu(a_p)+m-c>0.$$
Observe that for the last inequality we also use the following: (i) if $m>c$ then $t\geq 2\nu(a_p)$,  and (ii) if $c+1 - \epsilon \leq m\leq c$ then $t>2\nu(a_p)+ \epsilon -1$. Hence $T^+(f_1)\equiv\ 0\bmod p$.  Next we have 
$$T^-{f_1} = \left[ 1,\underset{\substack{s-m\leq j<r-m\\ j\equiv (r-m)\bmod (p-1)}}\sum\frac{p^{r-j+m}{{r-l}\choose j}}{a^2_p{{r-l}\choose r-m}}x^{r-j}y^j\right].$$\\
Here we observe that valuation of coefficients above is at least 
$r-j+m-\nu\left({{r-l}\choose r-m}\right)-2\nu(a_p)\geq (p-1)+2m-1-2\nu(a_p)>p-1+2m-1-(p+2c-2\epsilon)\geq 2(m-c+\epsilon)-2>0.$
Here the second inequality follow from $\nu(a_p)<\frac{p}{2}+c-\epsilon$ and the last inequality follows from $m\geq c+1-\epsilon$.
%because: (i) if $m\geq c+1$ then  $\epsilon = 0$, (ii) if $m\geq c$ then $\epsilon = 1$, and (iii) If $m\geq c-1$ then $\epsilon = 2$. 
%$$\implies\ \ \ r-j+m-\nu\left({{r-l}\choose r-m}\right)-2\nu(a_p)>0.$$
Hence $T^+(f_1)$ and $T^-(f_1)$ are both congruent to zero mod$\ p$. Now we will compute $T^+(f_0),\ T^-(f_0)$ and $a_pf_0$ in respective cases. 

\textbf{Case (i)} $0\leq b-m\leq c<b-m+p-1$\\
Here we note that $m\geq c$ because $m = c-1$ gives $c<b-m+p-1 = b-(c-1)+p-1\implies b>2c-p$, but for this range of $b$ we have by  assumption that $m\geq c$. 
$$T^+(f_0) = \sum_{\lambda\in I^*_1}\left[ g^0_{1,\lambda}, \sum_{0\leq j\leq b-m}\frac{p^{j+2m-b}{{r-l}\choose b-m}(-\lambda)^{b-m-j}}{a_p{{r-l}\choose r-m}}\left({{r-s+b-m}\choose j}-{{b-m}\choose j}\right)x^{r-j}y^j\right]$$
$$\hspace{8em}+\sum_{\lambda\in I_1}\left[ g^0_{1,\lambda}, \sum_{b-m+1\leq j\leq r-s+b-m}\frac{p^{j+2m-b}{{r-l}\choose b-m}{{r-s+b-m}\choose j}(-\lambda)^{r-s+b-m-j}}{a_p{{r-l}\choose r-m}}x^{r-j}y^j\right]$$
$$\hspace{29em}-\left[g^0_{1,0}, \frac{p^{m}{{r-l}\choose b-m}}{a_p{{r-l}\choose r-m}}x^{r-b+m}y^{b-m}\right]$$\\
Here we note that the valuation of the coefficients in the first sum (I) is at least
%Now we will estimate the valuation of the above coefficient:  For (I), here we use $b-m\leq c<b-m+p-1$
\begin{eqnarray*}
j+2m-b+t-\nu(j!)-\nu(a_p)-\nu\left({{r-l}\choose r-m}\right)&\geq & m-b+t-\nu(a_p)+m-\nu\left({{r-l}\choose r-m}\right)\\
&>& m-b+\nu(a_p)\geq\ \nu(a_p)-c>0.
\end{eqnarray*}
We deduce that the sum in (II) is also zero mod $p$ using the above inequalities and the fact that $\nu\left({{r-s+b-m}\choose j}\right)\geq t-\nu(j!)$ for $j\geq b-m+1$. 
Therefore we have 
 $ T^+( f_0)\equiv \left[ g^0_{1,0}, \frac{- p^{m}{{r-l}\choose b-m}}{a_p{{r-l}\choose r-m}}x^{r-b+m}y^{b-m}\right]$. Further,
\begin{eqnarray*}
\hspace{6em} T^-(f_0) &=& \left[\alpha, \frac{p^{s+3m-2b}{{r-l}\choose b-m}}{a_p{{r-l}\choose r-m}}x^{s-b+m}y^{r-s+b-m}-\frac{p^{r+3m-2b}{{r-l}\choose b-m}}{a_p{{r-l}\choose r-m}}x^{r-b+m}y^{b-m}\right].
\end{eqnarray*}
We use $0\leq b-m\leq c$ and Lemma \ref{rk3.15} to give the estimate below of the valuation of the coefficient of the first term: 
\begin{eqnarray*}
s+3m-2b-\nu(a_p)-\nu\left({{r-l}\choose r-m}\right)
 &= & \begin{cases}
 s+m-2(b-m)-\nu(a_p)-1 &\text{if}\ m\geq b-c+1,\ 0\leq l\leq b-c\\
 s+m-2(b-m)-\nu(a_p) &\text{else}
 \end{cases}\\
 &\geq & \begin{cases}
 s+m-2c-\nu(a_p)+1 &\text{if}\ m\geq b-c+1,\ 0\leq l\leq b-c\\
 s+m-2c-\nu(a_p) &\text{else}.
 \end{cases}
\end{eqnarray*}
Therefore $s+3m-2b-\nu(a_p)-\nu\left({{r-l}\choose r-m}\right)\geq  s+m-2c-\nu(a_p)>\nu(a_p)-c+m-c>0$ as $s>2\nu(a_p)$ and $\nu(a_p)>c$. The second term is also zero mod $p$ by the same calculation above and observing that $r > s$. Therefore $T^-(f_0)\equiv\ 0\bmod p$.  Next for $a_{p} f_{0}$, using Lemma \ref{rk3.15} (note that $0\leq b-c\leq m<b-c+p-1$) we have:  

$$\nu\left(\frac{p^{2m-b}{{r-l}\choose b-m}}{{{r-l}\choose r-m}}\right) \geq \begin{cases}
                                                                        2m-b-1\quad\text{if}\quad m\geq b-c+1,\ 0\leq l\leq m-c\\
                                                                        2m-b \quad\text{otherwise}
                                                                         \end{cases}$$
 $$\nu\left(\frac{p^{2m-b}{{r-l}\choose b-m}}{{{r-l}\choose r-m}}\right)\geq \begin{cases}
                                                                        m-(c-1)-1\quad\text{if}\quad m\geq b-c+1,\ 0\leq m-c\\
                                                                        m-c \quad\text{otherwise}
                                                                         \end{cases}$$
                                                                         
giving that $\nu\left(\frac{p^{2m-b}{{r-l}\choose b-m}}{{{r-l}\choose r-m}}\right)\geq m-c$ in all cases. If  $m \geq c+1$ then $\nu\left(\frac{p^{2m-b}{{r-l}\choose b-m}}{{{r-l}\choose r-m}}\right)\geq 1$.  If $m =c$ then $2m-b-1 = 2c-b-1 \geq 1$ since we also have $b \leq 2c-2$.  Hence $a_pf_0\equiv 0\bmod p$ in all cases.

\textbf{Case (ii)} $0\leq b-m+p-1\leq c$ and $(b, \ m)\not= (2c-p+1, \ 0)$\\
In this case we have $c\geq 3$ as $b\geq 2$ and $ m<p-1$.  Let $c_0:= \frac{p^{2m-b-(p-1){{r-l}\choose b-m+p-1}}}{a_p{{r-l}\choose r-m}}$.
$$T^+(f_0) = \underset{\lambda\in I_1}\sum\left[g^0_{1,\lambda}, \ \underset{0\leq j\leq b-m+p-1}\sum p^jc_0(-\lambda)^{b-m-j}\left({{r-s+b-m+p-1}\choose j}- {{b-m+p-1}\choose j}\right)x^{r-j}y^j\right]$$
$$\hspace{5em}+\underset{\lambda\in I_1}\sum\left[ g^0_{1, \lambda},\ \underset{b-m+p\leq j\leq r-s+b-m+p-1}\sum p^jc_0{{r-s+b-m+p-1}\choose j}(-\lambda)^{r-s+b-m+p-1-j}x^{r-j}y^j\right]$$
$$\hspace{26em}-\left[ g^0_{1,0}, \ p^{b-m+p-1}c_0x^{r-s+b-m+p-1}y^{b-m+p-1}\right].$$\\
Here we note that $\nu\left({{r-s+b-m+p-1}\choose j}-{{b-m+p-1}\choose j}\right)\geq t-\nu(j!)$ and $j\geq b-m+p$ gives $\nu\left({{r-s+b-m+p-1}\choose j}\right)\geq t-\nu(j!)$. Hence the valuation of the coefficients in the first two sums is at least $j+\nu(c_0)+t-\nu(j!)\geq t+\nu(c_0)>0$. The last inequality holds since
$t+\nu(c_0) = t-\nu(a_p)-(b-m+p-1)+m-\nu\left({{r-l}\choose r-m}\right)+\nu\left({{r-l}\choose b-m+(p-1)}\right)>\nu(a_p)-c>0.$ Therefore,
$$T^+(f_0)\equiv -\left[ g^0_{1,0}, \ \frac{p^m{{r-l}\choose b-m+p-1}}{a_p{{r-l}\choose r-m}} x^{r-s+b-m+p-1}y^{b-m+p-1}\right]\bmod p.$$
Now,
$$T^-(f_0) = \left[\alpha,\ p^{s-(b-m+p-1)}c_0x^{s-(b-m+p-1)}y^{r-s+b-m+p-1}- p^{r-(b-m+p-1)}c_0x^{r-(b-m+p-1)}y^{b-m+p-1}\right].$$
The valuation of the coefficients above is at least
\begin{eqnarray*}
 s-(b-m+p-1)+\nu(c_0) &\geq& m+(c-1)(p-1)+2m-b-(p-1)-\nu(a_p) -\nu\left({{r-l}\choose r-m}\right)\\
&= & (c-2)(p-1)+p-1-\nu(a_p)+m-(b-m+p-1)+m-\nu\left({{r-l}\choose r-m}\right)\\
&>& (c-2)(p-1) + m-c> 0\quad \text{as}\quad c\geq 3.
\end{eqnarray*}
Hence we have $T^-(f_0)\equiv 0\bmod p$. Now we will estimate the valuation of the coefficient of $a_pf_0$. By Lemma \ref{rk3.15} (B)
\begin{eqnarray*}
\nu\left(a_pc_0\right)
&\geq &\begin{cases}
              2m-b-(p-1)-1 &\text{if}\quad b-c+p+1\leq m\leq b-c+2p,\ 0\leq l\leq m-c+1\\
               2m-b-(p-1)&\text{otherwise}.

              \end{cases}\\
&\geq & \begin{cases}
              m-c+1 &\text{since }\quad b-c+p+1\leq m \\
               m-c &\text{since } b-m+p-1\leq c.

              \end{cases}
\end{eqnarray*}
Hence $\nu(a_pc_0)>0$ if $m>c$ and also if $m = c$ in the first case.  Further,  we observe that the second case occurs only if $b-c +p-1 \leq m $ giving us $ b + p-1 \leq 2c$ if $m=c$.  Thus in this case,  $\nu(a_{p} c_{0}) \geq 2c -b-(p-1) > 0$ if $m=c$ as long as $c\not = \frac{b+p-1}{2}$.  
Lastly $m = c-1$ occurs only if $b\leq 2(c-1)-(p+1)$ thus in this case  $\nu(a_pc_0)\geq 2m-b-(p-1)-1 = 2(c-1)-p-b\geq 1.$ 
Therefore we have $a_pf_0\equiv 0\bmod p$ in all cases. \\
Also note that as $m-\nu\left({{r-l}\choose r-m}\right)>l\geq 0$, we have: 
$$-a_pf_3 = \sum_{\lambda\in I^*_1}\left[ g^0_{2,p\lambda},\left( {\frac{p}{\lambda}}\right)^{m-l}\frac{F_l(x,y)}{(p-1){{r-l}\choose r-m}}\right]\equiv\ 0\bmod p.$$
Thus to summarize:
\begin{eqnarray*}
(T-a_p)(f_3)&\equiv &\ \left[ g^0_{1,0}, \underset{\substack{0\leq j\leq r-l\\j\equiv\ (r-m)\bmod (p-1)}}\sum\frac{p^m{{r-l}\choose j}}{a_p{{r-l}\choose r-m}}x^{r-j}y^j\right]\\
(T-a_p)(f_2)&\equiv &-\left[ g^0_{1,0},\frac{p^m}{a_p}x^my^{r-m}\right]+\left[ g^0_{2,0}, F_m(x,y)\right]\\
(T-a_p)(f_1)&\equiv &- \left[ g^0_{1,0}, \frac{p^m}{a_p}\underset{\substack{s-m\leq j<r-m\\j\equiv (r-m)\bmod (p-1)}}\sum\frac{{{r-l}\choose j}}{{{r-l}\choose r-m}}x^{r-j}y^j\right]\\
(T-a_p)( f_0)&\equiv&- \left[ g^0_{1,0}, \frac{p^m}{a_p} \underset{\substack{0\leq j\leq c\\ j\equiv (r-m)\bmod (p-1)}}\sum\frac{{{r-l}\choose j}}{{{r-l}\choose r-m}}x^{r-j}y^j\right]\quad\text{if}\quad (b,\ m)\not=(2c-p+1, \ c)
\end{eqnarray*}

\noindent and $(T-a_p)(f_0) = 0$ if $(b,\ m)=(2c-p+1, \ c)$. Hence $f= f_3+f_2+f_1+f_0$ is the required function. 
\end{proof}

\begin{prop}{\label{other generator}}
Let $r = s+p^t(p-1)d$,  $s = b+c(p-1)$ such that $p\nmid d$,  $2\leq b\leq p$ and $1\leq c\leq p-2$. Suppose $c<\nu(a_p)<p-1$ and $1\leq m\leq c-1-\epsilon$. If $t>\nu(a_p)+c$ then 
\begin{equation}{\label{fm congruency}}
x^{r-b+m-(c-m-a)(p-1)}y^{b-m+(c-m-a)(p-1)}\equiv (-1)^m{{m+a-1}\choose a-1} F_m(x,y)\bmod \left( V^{m+1}_r+\kerp\right)
\end{equation}
 for $1\leq a\leq c-m-\epsilon$ where $\epsilon$ is defined in (\ref{dfn epsilon}). Further if $2\leq b\leq 2(c-1)-(p+1)$ and $m = b-1$ then (\ref{fm congruency}) holds for $1\leq a\leq c-m-1$.
\end{prop}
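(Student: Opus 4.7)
The plan is to combine the $\kerp$-relations among the monomials $M_k := x^{r-b+m-k(p-1)}y^{b-m+k(p-1)}$ furnished by Proposition \ref{gen 1} with the identifications of specific $M_k$'s as elements of $\kerp$ coming from Propositions \ref{mono 1} and \ref{mono 1.2}, and then to pin the resulting middle-range relations against $F_m(x,y)$ modulo $V_r^{(m+1)}$ by means of explicit $\theta^{m+1}$-identities.

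The hypotheses force the assumptions of Proposition \ref{gen 1} (since $m<c<\nu(a_p)$ and $t>\nu(a_p)+c$), so for each $l \in \{0,1,\dots,c-1\}$ one obtains $f^l \in \indkg(\symqp)$ with $(T-a_p)f^l$ equal, modulo $\mathfrak{m}_{\bar{\mathbb{Z}}_p}\indkg(\symzp)$, to the combination of monomials $\binom{r-l}{j}x^{r-j}y^j$ over $0<j<s-m$ with $j \equiv s-m \pmod{p-1}$, all placed at $g$. Applying $P$ and using that $(T-a_p)\indkg(\symqp)\cap\indkg(\symzp)$ maps to zero in $\bar{\Theta}_{k',a_p}$ yields
\[
\sum_{k} \binom{r-l}{b-m+k(p-1)}\,M_k \;\equiv\; 0 \pmod{\kerp}.
\]
Depending on whether $c\leq b\leq p$ (Proposition \ref{mono 1}) or $2\leq b\leq c-1$ (Proposition \ref{mono 1.2}, with its subcase determined by $m$ vs $b$ and by $\epsilon$), a specific family of boundary monomials --- typically $k\in[0,b-m]\cup[c-m,c-1]$ in the nontrivial subcases, or all of $[0,c-1]$ otherwise --- already lies in $\kerp$ and may be dropped. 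What remains is a reduced linear system in the middle monomials $M_k$ with $k \in [\max(b-m+1,\epsilon),\,c-m-1]$; I would then invoke the mod $p$ evaluations of Lemma \ref{coeff 5.1} and the matrix-inversion arguments of Propositions \ref{mono 1} and \ref{mono 1.2} (relying on Lemma \ref{grinberg} and the identities of Lemmas \ref{cmbi 1} and \ref{cmbi 4}) to express each middle monomial $M_{c-m-a}$ as an $\mathbb{F}_p$-multiple of a single reference element (for instance $M_{c-m-1}$) modulo $\kerp$.

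To identify this reference with $(-1)^m\binom{m+a-1}{a-1}F_m$ modulo $V_r^{(m+1)}+\kerp$, I would exploit the identity $\theta^{m+1}\,x^{\alpha}y^{\beta}\in V_r^{(m+1)}$ for suitable non-negative $\alpha,\beta$ with $\beta \equiv b-2m-1 \pmod{p-1}$ (such choices exist because $r$ is large, $t \geq 2$ making $r \geq s+p^{2}(p-1)$). Its $\bmod\,p$ expansion reads
\[
\sum_{i=0}^{m+1}(-1)^{i}\binom{m+1}{i}\,M_{i+j}\;\equiv\; 0 \pmod{V_r^{(m+1)}}
\]
for each valid shift $j$. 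Combined with the decomposition $F_m(x,y) = x^m y^{r-m} - M_c$ and the already-established $\kerp$-relations of the previous step, applying these $\theta^{m+1}$-identities for shifts $j$ near $c-m-1$ propagates the dependence on $M_c$ (and on $x^m y^{r-m}$, using a further such identity of boundary type) back into the middle range, and the binomial identity of Lemma \ref{cmbi 1} matches the scalar to exactly $(-1)^m\binom{m+a-1}{a-1}$. The main obstacle will be navigating the detailed case-split on $\epsilon\in\{0,1,2\}$ and on the position of $m$ relative to $b$, which determines which subcase of Proposition \ref{mono 1.2} applies and whether the coefficient matrix is invertible modulo $p$; the further claim for $m=b-1$ in the $\epsilon=2$ regime corresponds to the boundary $a=c-m-1$ where the generic matrix becomes singular, and will be dispatched by a direct calculation using the additional monomials in $\kerp$ available in this specific regime via Proposition \ref{mono 1.2} Case (iii).
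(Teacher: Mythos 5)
Your second step is, in substance, the paper's entire proof: one multiplies $\theta^{m+1}$ into an auxiliary monomial to get $\sum_{i=0}^{m+1}(-1)^i{m+1\choose i}M_{j+i}\equiv 0 \bmod V_r^{(m+1)}$, kills the terms with index in $[c-m,\,c-1]$ using Propositions \ref{mono 1} and \ref{mono 1.2}, identifies the index-$c$ term with $F_m$ modulo $\kerp$ (via $x^my^{r-m}\in\kerp$, not via a further $\theta$-identity), and runs an induction over the shifts $j=1,\dots,c-m-\epsilon$ whose recursion is solved by Lemma \ref{cmbi 1}. Your first step, however, is both unnecessary and unworkable. The relations furnished by Proposition \ref{gen 1} cannot express a middle monomial $M_{c-m-a}$ as a unit multiple of a reference monomial modulo $\kerp$: in exactly the regimes where the middle monomials are not already individually in $\kerp$, their coefficients ${r-l\choose b-m+k(p-1)}$ for $b-m+1\leq k\leq c-m-1$ vanish mod $p$ (this is stated explicitly in the proof of Proposition \ref{mono 1.2} and is the very reason those indices are excluded from its conclusion), so every vector in the mod-$p$ column space of your coefficient matrix has zero entries at the middle indices and no nontrivial relation among middle monomials can be extracted. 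The proof survives only because the induction in your second step, carried out for every shift, does all the work on its own.

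The substantive gap is that you never justify the range $1\leq a\leq c-m-\epsilon$, which is where the $\epsilon$-dependence of the statement comes from. The identity $\theta^{m+1}x^{\alpha}y^{\beta}$ produces the string $M_{j},\dots,M_{j+m+1}$ only when $\beta=b-2m-1+j(p-1)$ \emph{exactly}, and the binding constraint is $\beta\geq 0$; the size of $r$ only controls $\alpha$. One must check $b-2m-1+(c-m-a)(p-1)\geq 0$ for $a\leq c-m-\epsilon$, which follows precisely from the inequalities defining $\epsilon$ in (\ref{dfn epsilon}) together with $m\leq c-1-\epsilon$. Likewise, the extension to $a=c-m-1$ when $m=b-1$ has nothing to do with a coefficient matrix degenerating: it holds because for $m=b-1$ the exponent $\beta=1-b+(c-m-a)(p-1)\geq p-b\geq 0$ remains non-negative for the one extra shift. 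Without these verifications the proposal does not establish the stated range of validity, which is exactly what Proposition \ref{m<slope} later consumes.
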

\begin{proof}
We begin by observing that if $2\leq b\leq 2(c-1)-(p+1)$ then $\epsilon = 2$ by hypothesis,  and so  (\ref{fm congruency}) holds for $1\leq a\leq c-m-2$ but if we take $m = b-1$ then we will prove (\ref{fm congruency}) actually holds for $1\leq a\leq c-m-1$.  Secondly, by Remark 4.4 of \cite{KBG} $F_m(x,y)\equiv x^{r-s+m}y^{s-m}\bmod (\kerp)$ which we use  later.
 
 Now let us consider $P_j := x^{r-(b+1+(c-j+1)(p-1))}y^{b-2m-1+(c-m-j)(p-1)}$ for $1\leq m\leq c-1-\epsilon$ and $0\leq j\leq c-m-\epsilon$. We claim that $P_j$ is a monomial, that is, the exponents of $x$ and $y$ are all non negative. The exponent of $x$ is non negative since $r> b+1+(c-j+1)(p-1)$ as $t\geq 2$ and $d\geq 1$.  And the exponent of $y$ is
\begin{eqnarray*}
 b-2m-1+(c-m-j)(p-1)&\geq & b-2m-1+\epsilon(p-1)\\
&\geq & b-2(c-1-\epsilon)-1+\epsilon(p-1)\\
&= & b-2(c-1)-1+\epsilon(p+1)\geq 0.
\end{eqnarray*}
The last inequality is clear if $\epsilon = 2$. It also follows for $\epsilon = 0$ and $\epsilon = 1$ since we have the conditions $b\geq 2c-1$ and $b\geq 2(c-1)-p$ for the corresponding values of $\epsilon$.  We note that if $m = b-1$ (for $b\leq 2(c-1)-(p+1)$) then above $P_j$ is a well defined monomial for $0\leq j\leq c-m-1$. This is because the exponent of $y$ is at least $b-2m-1+(p-1) = p-1-m\geq 0$. Hence in both case we observe that $P_j\in V_{r-(m+1)(p-1)}$ as the sum of the exponent of $x$ and $y$ is $r-(m+1)(p-1)$. Therefore,
\begin{equation}{\label{theta multi}}
\Theta^{m+1}P_j =\sum_{0\leq i\leq m+1}(-1)^i{m+1\choose i}x^{r-b+m-(c-m-j+i)(p-1)}y^{b-m+(c-m-j+i)(p-1)}.
\end{equation}
Next, we prove by induction that 
\begin{equation}{\label{claim}}
x^{r-b+m-(c-m-a)(p-1)}y^{b-m+(c-m-a)(p-1)}\equiv (-1)^m\eta_a F_m(x,y)\bmod \left( V^{m+1}_r+\kerp\right)
\end{equation}
for $1\leq a\leq c-m-\epsilon$ and for $1\leq a\leq c-m-1$ if $m = b-1$ (in case of $\epsilon = 2$ ) where 
\[ \eta_a = \begin{cases}
                 1  \quad & \quad \text{for}\quad a=1\\
                  \sum_{1\leq i\leq a-1}(-1)^{i+1}{{m+1}\choose i}\eta_{a-i} \quad & \quad \text{for}\ 2\leq a\leq c-m.
                  \end{cases}
                  \] 
Now putting $j = 1$ in (\ref{theta multi}) gives 
$$\sum_{0\leq i\leq m+1}(-1)^i{m+1\choose i}x^{r-b+m-(c-m-1+i)(p-1)}y^{b-m+(c-m-1+i)(p-1)}\equiv 0\bmod \left( V^{m+1}_r+\kerp\right).$$
We observe that except the first and the last term, all the terms belong to the kernel of $P$ by Proposition \ref{mono 1} and Proposition \ref{mono 1.2} since $1\leq i\leq m$ implies $c-m\leq c-m-1+i\leq c-1$. Therefore we get\\
$$x^{r-b+m-(c-m-1)(p-1)}y^{b-m+(c-m-1)(p-1)} \equiv  (-1)^m x^{r-b+m-c(p-1)}y^{b-m+c(p-1)}\bmod \left( V^{m+1}_r+\kerp\right)$$
$$\hspace*{14em}\equiv  (-1)^m\ \eta_1\ F_m(x,y)\bmod \left( V^{m+1}_r+\kerp\right).$$
This proves (\ref{claim}) for $a = 1$. Now by induction, we assume (\ref{claim}) holds for $1\leq a\leq n-1$ and prove the same for $a = n$.  Again by putting $j = n$ in (\ref{theta multi}) (noting that $n \leq c-m-\epsilon$ in general and $n \leq c-m-1$ in case of $m = b-1$ and $\epsilon = 2$),  we get 
$$\sum_{0\leq i\leq m+1}(-1)^i{m+1\choose i}x^{r-b+m-(c-m-n+i)(p-1)}y^{b-m+(c-m-n+i)(p-1)}\equiv 0\bmod \left( V^{m+1}_r+\kerp\right).$$
Here we observe that if $2 \leq n\leq m+1 $ then by Proposition \ref{mono 1} and Proposition \ref{mono 1.2} the above sum over $n\leq i\leq m+1$ belongs to $\kerp$.  If $n \geq m+2$ then ${{m+1}\choose i} = 0$ for all $m+1< i\leq n-1$.  So in either case, we have 
$$\sum_{0\leq i\leq n-1}(-1)^i{m+1\choose i}x^{r-b+m-(c-m-n+i)(p-1)}y^{b-m+(c-m-n+i)(p-1)}\equiv 0\bmod \left( V^{m+1}_r+\kerp\right).$$
For $1\leq i\leq n-1$, by induction $$x^{r-b+m-(c-m-(n-i))(p-1)}y^{b-m+(c-m-(n-i))(p-1)}\equiv (-1)^m\  \eta_{n-i}\ F_m(x, y)\bmod \left( V^{m+1}_r+\kerp\right)$$
$$ \implies x^{r-b+m-(c-m-n)(p-1)}y^{b-m+(c-m-n)(p-1)}\equiv (-1)^m\  \eta_n\ F_m(x, y)\bmod \left( V^{m+1}_r+\kerp\right).$$
Now using induction on $a$ and Lemma \ref{cmbi 1}, we can prove $\eta_a = {{m+a-1}\choose a-1}$. This completes the proof of our proposition.
\end{proof}
\section{Elimination of JH Factors }
\begin{prop}{\label{m<slope}}
Let $r = s+p^t(p-1)d$, $s = b+c(p-1)$ such that  $p\nmid d$,  $2\leq b\leq p$ and $0\leq c\leq  p-2$. Suppose that $s\geq 2c$ and $c<\nu(a_p)<p-1$.  Further we also assume $t\geq 2\nu(a_p)$ then there is a surjection 
$$ \ind^G_{KZ}\left( \frac{V^{(c-\epsilon)}_r}{V^{(\lfloor{\nu(a_p)}\rfloor+1)}_r}\right)\rightarrow\bar{\Theta}_{r+2,a_p}$$
where $\epsilon$ is defined as in (\ref{dfn epsilon}) and the map is induced from $P: \ind^G_{KZ}V_r\rightarrow\bar{\Theta}_{r+2,a_p}$.
\end{prop}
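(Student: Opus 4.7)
The plan is to apply Lemma~\ref{lm m<c} with $n = c - \epsilon$. This reduces the task to producing, for each $0 \leq m \leq c-\epsilon-1$, a $KZ$-submodule $W_m \subseteq V_r^{(m)}$ satisfying $P\bigl(\indkg(W_m)\bigr) = 0$ and $W_m \twoheadrightarrow V_r^{(m)}/V_r^{(m+1)}$.

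For the range $1 \leq m \leq c - \epsilon - 1$, I would take $W_m$ to be the $KZ$-submodule of $V_r^{(m)}$ generated by the monomials $x^{r-b+m-j(p-1)} y^{b-m+j(p-1)}$ supplied by Propositions~\ref{mono 1} and \ref{mono 1.2}; since these lie in $\kerp$, the first condition holds automatically. For the surjectivity condition, Proposition~\ref{other generator} says that for each $1 \leq a \leq c-m-\epsilon$ the monomial $x^{r-b+m-(c-m-a)(p-1)} y^{b-m+(c-m-a)(p-1)}$ equals $(-1)^m \binom{m+a-1}{a-1} F_m(x,y)$ modulo $V_r^{(m+1)} + \kerp$, and the binomial coefficient is a unit mod $p$ since $m+a-1 \leq c-1 \leq p-3$. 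It therefore suffices to exhibit, for at least one $a$ in $[1, c-m-\epsilon]$, a monomial that also appears in the families provided by Propositions~\ref{mono 1}/\ref{mono 1.2}. This is verified by a case analysis on $\epsilon$ and on the position of $b$ relative to $c$: for $\epsilon = 0$ (so $b \geq 2c-1$), Proposition~\ref{mono 1} supplies the full range $j \in [0, c-1]$; for $\epsilon = 1$ one uses Proposition~\ref{mono 1}(ii)/(iii) when $b \geq c$ and Proposition~\ref{mono 1.2} when $b \leq c-1$; for $\epsilon = 2$ (so $b \leq 2c-3-p$) Proposition~\ref{mono 1.2} suffices, invoking the extended clause $1 \leq a \leq c-m-1$ at the boundary $m = b-1$. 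Once a valid $a$ is secured, $F_m(x,y) \in W_m + V_r^{(m+1)} + \kerp$, and combined with Lemma~\ref{lm3.2}(2) -- which says $F_m$ generates $V_r^{(m)}/V_r^{(m+1)}$ as a $\glfp$-module -- we conclude $W_m \twoheadrightarrow V_r^{(m)}/V_r^{(m+1)}$.

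The case $m = 0$ falls outside the hypothesis of Proposition~\ref{other generator} and requires a separate Hecke-theoretic argument. The target is to construct an explicit $f \in \indkg(\symqp)$ whose image $(T - a_p)(f)$ realizes $F_0(x,y) = y^r - x^{r-s} y^s$ modulo $V_r^{(1)}$, following the template of Propositions~\ref{gen 1} and \ref{gen 2}. The hypothesis $t \geq 2\nu(a_p)$ together with the mod $p$ congruences in Lemma~\ref{srjm} and the valuation estimates in Lemma~\ref{rk3.15} are precisely what is needed to force the parasitic error terms to vanish modulo $p$; one then takes $W_0$ to be the $KZ$-submodule generated by $F_0$ together with any companion monomials produced along the way.

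The principal obstacle is two-fold: first, checking that a compatible index $a$ exists in every parameter regime of $(b, c, m, \epsilon)$, especially at the boundary $m = b-1$ when $\epsilon = 2$ where the extended clause of Proposition~\ref{other generator} must be used; second, supplying the separate $m=0$ Hecke computation, which does not reduce directly to the generators constructed in Sections~3 and~4.
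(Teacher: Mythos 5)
Your treatment of $1\leq m\leq c-\epsilon-1$ is essentially the paper's argument: the same combination of Propositions \ref{mono 1}, \ref{mono 1.2} and \ref{other generator}, with the common index located at $j=\epsilon$ (i.e.\ $a=c-m-\epsilon$) in general and at $j=\epsilon-1$ in the boundary case $(\epsilon,m)=(2,b-1)$. One technical correction there: the monomials $q_j=x^{r-b+m-j(p-1)}y^{b-m+j(p-1)}$ are not themselves divisible by $\theta^m$, so the submodule they generate is \emph{not} contained in $V_r^{(m)}$, and Lemma \ref{lm m<c} cannot be applied to it. You must instead write $q_j=u\,F_m(x,y)+v_{m+1}+\alpha_m$ with $u$ a unit, $v_{m+1}\in V_r^{(m+1)}$ and $\alpha_m\in\kerp$, and take $W_m$ to be generated by $u\,F_m+v_{m+1}=q_j-\alpha_m$, which does lie in $V_r^{(m)}\cap\kerp$ and surjects onto $V_r^{(m)}/V_r^{(m+1)}$ by Lemma \ref{lm3.2}. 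This is what the paper does.

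The genuine gap is $m=0$, which you acknowledge but do not supply. Your proposed route (realize $F_0=y^r-x^{r-s}y^s$ in $\kerp$ modulo $V_r^{(1)}$ by a fresh Hecke computation) is neither carried out nor the paper's route, and it is not obviously easier. The paper instead exploits the JH filtration of $V_r/V_r^{(1)}$: the submodule (generated by $x^r$, resp.\ by $x^r$ and $x^{r-1}y$ when $b=p$) already lies in $\kerp$ by Remark $4.4$ of \cite{KBG}, so only the generator $x^{r-b}y^b$ of the quotient needs attention. For $b\leq c-1$ that monomial is in $\kerp$ directly (as $b<\nu(a_p)$); for $c\leq b\leq p-1$ no new Hecke computation is required, since Proposition \ref{gen 1} with $l=0$, $m=0$ already places $q_0'=\sum_{0<j<s,\ j\equiv r\,(p-1)}\frac{1}{p}\binom{r}{j}x^{r-j}y^j$ in $\kerp$, and a short congruence (Lucas together with Lemma $2.5$ of \cite{BG}) gives $q_0'\equiv \frac{b-s}{b}\,x^{r-b}y^b \bmod V_r^{(1)}$ with $\frac{b-s}{b}$ a unit; for $b=p$ both generators of $V_r/V_r^{(1)}$ are already in $\kerp$. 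Until either this argument or a completed version of your $F_0$ computation is in place, the hypothesis of Lemma \ref{lm m<c} at $m=0$ is unverified and the proof is incomplete. Finally, note the degenerate case $c=0$ (so $c-\epsilon=0$): Lemma \ref{lm m<c} requires $n\geq 1$, so here the statement must be obtained directly from the factoring of $P$ through $\ind^G_{KZ}(V_r/V_r^{(\lfloor\nu(a_p)\rfloor+1)})$, which you should state explicitly.
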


\begin{proof}
By Remark $4.4$ in \cite{KBG}, we have $\ind^G_{KZ}V^{(n)}_r\subset \kerp$ if $r\geq n(p+1)$ and $n>\nu(a_p)$. Using this fact for $n = \lfloor{\nu(a_p)}\rfloor+1$, we have $\ind^G_{KZ}\left(V^{(\lfloor{\nu(a_p)}\rfloor+1)}_r\right)\subset \kerp$ for $r\geq (\lfloor{\nu(a_p)}\rfloor+1)(p+1)$. For $r<(\lfloor{\nu(a_p)}\rfloor+1)(p+1)$, note that $V^{(\lfloor{\nu(a_p)}\rfloor+1)}_r = 0$. Hence, the surjection $P$ factors through $\ind^G_{KZ}\left(\frac{V_r}{V^{(\lfloor{\nu(a_p)}\rfloor+1)}_r}\right)$. This proves the proposition in the case when $c = 0$ since here we have $\epsilon = 0$.
Henceforth we assume that $c\geq 1$.

\textbf{Case (i)} $m =0$\\
\underline{\textbf{Subcase (i)}} For $2\leq b\leq p-1$\\
If $b\leq c-1$ then by Remark $4.4$ of \cite{KBG} $ x^{r-b}y^b\in \kerp$ as $b\leq c-1<\nu(a_p)$. If $c\leq b\leq p-1$ then Proposition \ref{gen 1} with $l = 0$ gives $$ [ g, \underset{\substack{0<j<s \\ j\equiv r\bmod p-1}}\sum\frac{{r\choose j}}{p}x^{r-j}y^j]\in \kerp.$$
But $x^{r-j}y^j\equiv x^{r-\bar{j}}y^{\bar{j}}\bmod\left(V^{(1)}_r\right)$ where $\bar{j}\equiv j\bmod (p-1)$ and $2\leq \bar{j}\leq p$
$$\implies \ \ \ \ \ \ \underset{\substack{0<j<s \\ j\equiv r\bmod p-1}} \sum\frac{{r\choose j}}{p}x^{r-j}y^j\equiv \eta x^{r-b}y^b \bmod \left( V^{(1)}_r\right)$$
where 
\begin{eqnarray*}
\eta &=& \underset{\substack{0<j<s\\ j\equiv s\bmod(p-1)}}\sum\frac{{r\choose j}}{p} \equiv\underset{\substack{0<j<s\\ j\equiv s\bmod(p-1)}}\sum\frac{{s\choose j}}{p}\equiv  \frac{b-s}{b}\not\equiv  0\bmod p.
\end{eqnarray*}
Here the first congruency follows since $\frac{{r\choose j}}{p}\equiv \frac{{s\choose j}}{p}\bmod p^{t-\nu(j!)} \ \text{and} \ \nu(j!)\leq \nu(s-(p-1)!)\leq c-1$, and the second last congruency follows from Lemma 2.5 in \cite{BG}.
Using $(4.2)$ of \cite{glover} and Lemma $5.3$ of \cite{Br03b}, we can see that the monomial $x^{r-b}y^b$ generates the quotient $V_{p-1-b}\otimes D^b$ of $\frac{V_r}{V^{(1)}_r}$ and $x^r$ generates the submodule $V_b$ of $\frac{V_r}{V^{(1)}_r}$, and the latter belongs to $\kerp$ by \cite{KBG}. 
Now let $$q'_0 = \underset{\substack{0<j<s \\ j\equiv r\bmod p-1}}\sum\frac{{r\choose j}}{p}x^{r-j}y^j$$ and we define $W_0$ in this case as the submodule generated by $x^r$ and $q'_0$.  Observe that $W_0$ satisfies all the required conditions of Lemma \ref{lm m<c}. \\
%So both the Jordan-Holder factors of $\frac{V_r}{V^{(1)}_r}$ have zero contribution in $\bar{\Theta}_{r+2,a_p}$. \\
\underline{\textbf{Subcase (ii)}} $b = p$ \\
In this case by using (\ref{r' = p}) we have the following
$$0\longrightarrow V_1\longrightarrow\frac{V_r}{V^{(1)}_r}\longrightarrow V_{p-2}\otimes D\longrightarrow 0.$$
In the above exact sequence,  the first map sends $x$ to $x^r$ and the second map sends $x^{r-1}y$ to $x^{p-2}$. By the Remark $4.4$ of \cite{KBG}, we have $x^r,\ x^{r-1}y\in\kerp$ as $1\leq c<\nu(a_p)$.  We define  $W_0$ in this case as the submodule generated by $x^r$ and $x^{r-1}y$, and observe that $W_0$ satisfies the required conditions of Lemma \ref{lm m<c}.

From here onwards we will assume $m\geq 1$ and organise the proof accordingly as\linebreak $m\in \{[1,\ b-1]\cup[b, \ c-1-\epsilon]\}\cap[1,\ c-1-\epsilon]$. 

\textbf{Case (ii)} $1\leq m\leq b-1$\\
In this case by Proposition \ref{mono 1}  and Proposition \ref{mono 1.2}, for $0\leq j\leq \text{min}\{ b-m, \ c-1\}$ the monomials $q_j := x^{r-b+m-j(p-1)}y^{b-m+j(p-1)}$ are in $\kerp$. Further $1\leq m\leq c-1-\epsilon$, and so by Proposition \ref{other generator} the monomials $q_j\equiv {{c-1-j}\choose m} F_m(x,y)\bmod \left(V^{(m+1)}_r+\kerp\right)$ for $\epsilon\leq j\leq c-m-1$ and for $1\leq j\leq c-m-1$ if $(\epsilon, m) = (2,b-1)$. Here we observe that $[0, \ b-m]\cap[\epsilon, \ c-m-1]\not= \Phi$ because it contains $j = \epsilon$ if $(\epsilon, \ m)\not = (2, \ b-1)$ and $j = \epsilon -1$ if $(\epsilon, \ m) = (2, \ b-1)$.

\textbf{Case (iii)} $b\leq m\leq c-1-\epsilon$\\
In this case,  by Proposition \ref{mono 1.2} the monomials $q_j = x^{r-b+m-j(p-1)}y^{b-m+j(p-1)}\in\kerp$ for $1\leq j\leq c-1$. Since $m\leq c-1-\epsilon$,  Proposition \ref{other generator} gives $q_j\equiv {{c-1-j}\choose m} F_m(x,y)\bmod \left(V^{(m+1)}_r+\kerp\right)$ for $\epsilon\leq j\leq c-m-1$.  Here we note that $j = \epsilon\in[1,\ c-1]\cap[\epsilon, \ c-m-1]$ since $\epsilon\geq 1$ as $b\leq c-1$.\\

Now we observe that ${{c-1-j}\choose m}\not\equiv 0\bmod p$ for $j\leq c-m-1$ and $m\leq c\leq p-1$. We also observe that $q_j = {{c-1-j}\choose m}F_m(x,y)+v_{m+1}+\alpha_m$ for some $v_{m+1}\in V^{(m+1)}_r$,  $\alpha_m\in\kerp$ and $q_j\in\kerp$, where $j = \epsilon$ if $(\epsilon, m) \not= (2,b-1)$ and $j = \epsilon-1$ if $(\epsilon, m) = (2,b-1)$.  For $1\leq m\leq c-1-\epsilon$ we define $W_m$  to be the submodule of $V_r$ generated by ${{c-1-j}\choose m}F_m(x,y)+v_{m+1}$. Now we note that $F_m(x,y)\in V^{m}_r$ generates $\ind^G_{KZ}\left(\frac{V^{(m)}_r}{V^{(m+1)}_r}\right)$ using Lemma \ref{lm3.2},  which is applicable since $s>2m$ (as $m\le c-1-\epsilon$ and $s\geq 2c$).  This gives $W_m\subset \left(V^{(m)}_r\cap\kerp\right)$ and it also surjects onto $\frac{V^{(m)}_r}{V^{(m+1)}_r}$.
Now we observe that taking $W_m$ as above in Lemma \ref{lm m<c} with $0\leq m\leq c-1-\epsilon$ gives our result. 
\end{proof}  

\begin{prop}{\label{m>c}}
Let $r = s+p^t(p-1)d $, $\  s=b+c(p-1)$ such that $p\nmid d$, $2\leq b\leq p$ and $0\leq c\leq p-2$. Fix $a_p$ such that $s>2\nu(a_p)$ and $c<\nu(a_p)<\text{min}\{\frac{p}{2}+c-\epsilon,\ p-1\}$ where $\epsilon$ is defined as in (\ref{dfn epsilon}).  Further assume that $t\geq 2\nu(a_p)$ if $b\geq 2c-1$ and $t>2\nu(a_p)+\epsilon-1$ if $b\leq 2c-2$.  Then:\\
(i) If $(b,c) \not= (p,0)$ then there is a surjection 
$$\ind^G_{KZ}\left( \frac{V_r}{V^{(c+1-\epsilon)}_r}\right)\rightarrow\bar{\Theta}_{k',a_p}.$$
(ii) For $(b,c) = (p,0)$ there is a surjection 
$$\ind^G_{KZ}\left( \frac{V_r}{V^{(2)}_r}\right)\rightarrow\bar{\Theta}_{k',a_p}.$$
\end{prop}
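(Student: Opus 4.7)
The proof will proceed by applying Lemma \ref{lm m>c} with $n = c+1-\epsilon$ in part (i) (respectively $n = 2$ in part (ii)). For this it suffices to show, for each integer $m$ with $n \leq m \leq \lfloor\nu(a_p)\rfloor$, that $F_m(x,y)$ lies in $\kerp$; Lemma \ref{lm3.2} ensures that $F_m$ generates $V_r^{(m)}/V_r^{(m+1)}$, since the hypothesis $s > 2\nu(a_p) \geq 2m$ yields $s > 2m$. By the $G$-invariance of $\kerp$, it is enough to prove $[g^0_{2,0}, F_m] \in \kerp$.

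Invoking Proposition \ref{gen 2} (with $l=0$, which lies in the admissible range by Remark \ref{rk 4.5} whenever $(b,c,m) \neq (p,0,1)$), we obtain $f^0 \in \indkg(\symqp)$ satisfying
\[
(T - a_p)(f^0) \equiv \frac{p^m}{a_p}\bigl[g^0_{1,0}, Q_m\bigr] + \bigl[g^0_{2,0}, F_m(x,y)\bigr] \pmod{p},
\]
where $Q_m$ is the explicit sum of monomials $x^{r-j}y^j$ weighted by $\binom{r}{j}/\binom{r}{r-m}$, indexed by $j \equiv r-m \pmod{p-1}$ with $c < j < s-m$ (or $0 \le j < s-m$ in the exceptional case $(b,m) = (2c-p+1, c)$). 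Since $(T-a_p)(f^0)$ vanishes in $\bar{\Theta}_{k', a_p}$, the task reduces to showing $\frac{p^m}{a_p}\bigl[g^0_{1,0}, Q_m\bigr] \in \kerp$.

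We argue by downward induction on $m$, starting at $m = \lfloor\nu(a_p)\rfloor$. At each stage the inductive hypothesis, combined with Lemma \ref{lm m>c} applied at the preceding levels, gives $\indkg\!\bigl(V_r^{(m+1)}/V_r^{(\lfloor\nu(a_p)\rfloor+1)}\bigr) \subset \kerp$, so it suffices to show $\frac{p^m}{a_p} Q_m \equiv 0 \pmod{V_r^{(m+1)} + \kerp}$. Writing $j = s - m - i(p-1)$ for $i \geq 1$, each monomial of $Q_m$ has the shape $x^{r-b+m-(c-i)(p-1)}y^{b-m+(c-i)(p-1)}$, matching the format of Propositions \ref{mono 1} and \ref{mono 1.2}; together with Proposition \ref{m<slope}, which eliminates all subquotients $V_r^{(m')}/V_r^{(m'+1)}$ for $m' < c - \epsilon$, these results can be used (in the manner of Proposition \ref{other generator}) to absorb each surviving monomial into $V_r^{(m+1)} + \kerp$.

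The principal technical difficulty is the valuation bookkeeping: one must combine Lemma \ref{rk3.15} with the assumed lower bounds on $t$ and the window $c < \nu(a_p) < p/2 + c - \epsilon$ to confirm integrality of the coefficients $\frac{p^m}{a_p} \cdot \frac{\binom{r}{j}}{\binom{r}{r-m}}$ and to ensure that the successive reductions modulo $V_r^{(m+1)} + \kerp$ behave as expected. The exclusion of $(b,c,m) = (p,0,1)$ in Proposition \ref{gen 2} is precisely what forces part (ii) to stop at $V_r/V_r^{(2)}$ rather than $V_r/V_r^{(1)}$, since no analogous identity is available to directly eliminate $F_1$ in that degenerate setting.
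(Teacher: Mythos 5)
Your overall skeleton (reduce to showing $F_m\in\kerp$ for $c+1-\epsilon\leq m\leq\lfloor\nu(a_p)\rfloor$ via Lemma \ref{lm m>c} and Lemma \ref{lm3.2}, then invoke Proposition \ref{gen 2}) matches the paper, but the way you dispose of the residual term $\frac{p^m}{a_p}\bigl[g^0_{1,0},Q_m\bigr]$ has a genuine gap. You invoke Proposition \ref{gen 2} only for $l=0$ and then try to absorb the monomials of $Q_m$ using Propositions \ref{mono 1} and \ref{mono 1.2}. Those propositions, however, are stated only for $1\leq m<c$; in the present range one has $m\geq c+1-\epsilon$, so for most $m$ (indeed all $m\geq c$) the monomials $x^{r-j}y^j$ with $j=b-m+(c-i)(p-1)$ appearing in $Q_m$ are \emph{not} among the monomials those propositions place in $\kerp$ --- the exponent pattern matches only formally, with the parameter $m$ outside the admissible interval. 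Moreover, Proposition \ref{m<slope} cannot be used the way you suggest: it asserts that $P$ restricted to $\ind^G_{KZ}\bigl(V^{(c-\epsilon)}_r/V^{(\nu+1)}_r\bigr)$ is still surjective, not that the lower subquotients (or monomials of small $\theta$-divisibility) lie in $\kerp$, so it gives no license to discard the terms of $Q_m$ modulo $V^{(m+1)}_r+\kerp$. There is also a coefficient problem: $\nu(p^m/a_p)=m-\nu(a_p)\leq 0$, so even if a monomial of $Q_m$ were known to lie in $\kerp$, its appearance with a coefficient of non-positive valuation is not automatically harmless.

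The paper's actual mechanism is different and is the real content of the proof: one takes a linear combination $f=\sum_l d_l f^{l}$ of the functions from Proposition \ref{gen 2} over \emph{several} values of $l$ (e.g.\ $l=m-1-i$, $0\leq i\leq c$, in the range $1\leq m<b-c$), choosing $\mathbf{d}=A^{-1}\mathbf{e}$ so that the sums $\sum_l d_l\binom{r-l}{j}/\binom{r-l}{r-m}$ vanish mod $p$ for every $j$ occurring in $Q_m$ while the coefficients of $[g^0_{2,0},F_m]$ add up to $1$. This requires verifying that the relevant matrices of binomial coefficients are invertible mod $p$ (Lemmas \ref{invmt 2}, \ref{invmt 1} and \ref{grinberg}), splitting into the cases $m<b-c$, $b-c\leq m<p-1+b-c$, $p-1+b-c\leq m$, and the exceptional $(b,m)=(2c-p+1,c)$, and checking via Lemma \ref{rk3.15} that each chosen $l$ satisfies $l<m-\nu\bigl(\binom{r-l}{r-m}\bigr)$ so that Proposition \ref{gen 2} applies. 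None of this cancellation argument appears in your proposal, and without it the residual term is not eliminated. Your remark about $(b,c,m)=(p,0,1)$ forcing part (ii) to stop at $V_r/V^{(2)}_r$ is correct.
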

\begin{proof}
Since the result is known for $0 < \nu(a_p) < 1$,  we assume that $\nu(a_p)\geq1$,  and so $t \geq 2$ by hypotheses. We show below that $P([g, \ F_m(x, y)]) = 0$ for $c+1-\epsilon\leq m\leq \lfloor\nu(a_p)\rfloor$ if $(b, \ c) \not= (p, \ 0)$ and for $2\leq m\leq \lfloor\nu(a_p)\rfloor$ if $(b, \ c) = (p, \ 0)$. \\

If $c = 0$ then the sum in Proposition \ref{gen 2} is empty,  and so we have $(T-a_p)f^l = \left[g^0_{2,0},F_m(x,y)\right] $,  where $1\leq m\leq\lfloor{\nu(a_p)}\rfloor$ if $b\leq p-1$ and $2\leq m\leq\lfloor{\nu(a_p)}\rfloor$ if $b= p$.  %Hence using Lemma 3.2(b) of \cite{maam} we have our result.\\
So now we assume $c\geq 1$ and organise the proof accordingly as $m$ lies in one of the intervals in \\
$\{[1, \ b-c)\cup[b-c,\ p-1+b-c)\cup[b-c+p-1,\ b-c+2(p-1))\}\cap  [c+(1-\epsilon), \lfloor{\nu(a_p)}\rfloor].$

\textbf{Case (i)} $1\leq m<b-c$ \\
Observe that $b-c> m\geq c+1-\epsilon\implies b>2c+1-\epsilon \geq 2c-1$, hence by hypothesis $m\geq c+1$.  In this case Lemma \ref{rk3.15} implies that $\nu({{r-l}\choose {r-m}})= 0$ for $l=0,1,...,m-1$. We consider  the following matrix $A = (a_{j,i})\in M_{c+1}(\mathbb{Z}_p)$ given by 
\[
    a_{j,i} =     
                     \begin{cases}
                      \frac{{{r-(m-1-i)}\choose{j(p-1)+b-m}}}{{{r-(m-1-i)}\choose{r-m}}} & \text{if}\ \ 0\leq j\leq c-1,\ 0\leq i\leq c\\
                       1                                                                                                      & \text{if} \ \  j = c,\ 0\leq i\leq c           \\
                         
                         \end{cases}
   \]        
   By using Lucas' Theorem we have 
   $$\det(A) \equiv\frac{\Pi_{0\leq j\leq c}{c\choose j}\cdot \det(B)}{ \Pi_{0\leq i\leq c}{{r-(m-1-i)}\choose r-m}}\bmod p$$
   where $B = (b_{j,i}) ,b_{j,i} = {{b-m-c+1+i}\choose b-m-j}$.  Now we note that $B$ is invertible mod $p$ by Lemma \ref{invmt 2}. Therefore $A\in \mathrm{GL}_{c+1}(\mathbb{Z}_p)$ since the above multiplicative factor is unit.   So taking column vector $ \textbf{d} = (d_0,d_1,...,d_c)^t = A^{-1}(0,0,...,0,1)^t\in\mathbb{Z}^{c+1}_p$ gives the following
  $$\sum_{0\leq i\leq c-1}d_i\frac{{{r-(m-1-i)}\choose j(p-1)+b-m}}{{{r-(m-1-i)}\choose r-m}} = 0 \quad\text{for} \quad \ 0\leq j\leq c-1$$
$$\hspace{2em}\sum_{0\leq i\leq c} d_i = 1 \quad\text{for}\quad j = c.$$
 
Observe that Proposition \ref{gen 2}(i) is applicable for $0\leq l\leq m-1$ because by Lemma \ref{rk3.15},  ${{r-l}\choose r-m} = 0\quad \forall\quad 0\leq l\leq m-1$.  Therefore,  we can take $f = \sum_{0\leq i \leq c}d_if^{m-1-i}$, where $f^{m-1-i}$ are as in Proposition \ref{gen 2}(i) ($0\leq m-1-c\leq m-1-i\leq m-1$).  Hence $(T-a_p)(f) \equiv\left[ g^0_{2,0}, F_m(x,y)\right]$ for $c+1\leq m<b-c$. 

\textbf{Case (ii)} $b-c\leq m< (p-1)+b-c$ \\
We begin by observing that $m = c-1 $ is not possible in this case since the above constraint with $m = c-1$ gives $2c < b+p$, whereas we must have $2c \geq b+p+3$ if $m=c-1$.
%For $c = 1$ then by Lemma \ref{rk3.15} we can take $l = 0$ in Proposition \ref{gen 2} giving $(T-a_p)(f^0) \equiv\left[ g^0_{2,0}, F_m(x,y)\right]$ for the above values of $m$.  This is because by hypothesis $m\geq c+1 = 2$ and $m- \nu\left({{r-l}\choose r-m}\right)\geq m-1\geq 1$. \\
For $c = 1$,  using Remark \ref{rk 4.5} and the inequalities above, we take $l = 0$ in Proposition \ref{gen 2} to get $(T-a_p)(f^0) \equiv\left[ g^0_{2,0}, F_m(x,y)\right]$ for the above values of $m$.  For $c\geq 2$, we consider  the following matrix $A = (a_{j,i})\in M_c(\mathbb{Z}_p)$ where
\begin{eqnarray*}
a_{j,i} &=&   
                    \begin{cases}
                    \frac{{{r-(b-m+j(p-1))}\choose i}}{{m\choose i}} & \text{if}\ \ 1\leq j\leq c-1, 0\leq i\leq c-1\\
                     1                                                       & \text{if}\ \ j=c, 0\leq i\leq c-1\\
                    \end{cases}\\
                    &\equiv &\begin{cases}
                            \frac{{{m-c+j}\choose i}}{{m\choose i}} & \text{if} \ \ 1\leq j\leq c-1,0\leq i\leq c-1\\
                             1                                                   & \text{if}\ \ j=c, 0\leq i\leq c-1\\
                            \end{cases}   \\
\implies \det\left(A\right) &\equiv & \frac{1}{ \underset{0\leq i\leq c-1}\Pi {m\choose i}}\det(B)
\end{eqnarray*}
 where $B = \left({{m-c+j}\choose i}\right)_{\substack{1\leq j\leq c\\ 0\leq i\leq c-1}}$.  By Lemma \ref{invmt 1}, $B$ is invertible mod $p$,  and therefore $A\in \mathrm{GL}_{c}(\mathbb{Z}_p)$.   So taking column vector $ \textbf{d} = (d_0,d_1,...,d_{c-1})^t = A^{-1}(1,,0,...,0)^t\in\mathbb{Z}^{c}_p$ gives the following
\begin{equation}{\label{eq 1 case ii m>c}}
\sum_{0\leq i\leq c-1} d_i = 1 \quad \text{for}\quad j = c 
\end{equation}
  $$\sum_{0\leq i\leq c-1}d_i \frac{{{r-(b-m+j(p-1))}\choose i}}{{m\choose i}} = 0  \quad \text{for} \quad 1\leq j\leq c-1.$$
 Now we multiply each $j^{th}$ equation ($1\leq j\leq c-1$) with $\frac{(r-m)!m!}{(b-m+j(p-1))!(r-(b-m+j(p-1)))!}$ to get
 \begin{equation}{\label{eq 2 case ii m>c}}
 \sum_{0\leq i\leq c-1}d_i\frac{{{r-i}\choose b-m+j(p-1)}}{{{r-i}\choose r-m}} = 0\quad \text{for all}\quad 1\leq j\leq c-1.
 \end{equation}
Now first, we claim that Proposition \ref{gen 2} is applicable for $0\leq i\leq c-1<m -\nu\left({{r-i}\choose r-m}\right)$.  Observe that by Lemma \ref{rk3.15} the claim is true if $(m, i)\not= (c, c-1)$ since in this case either $m\geq c+1$ or $i\leq c-2$. If $(m, i) = (c, c-1)$  then both $m$ and $i$ are atleast  $b-c+1$.  So by Lemma \ref{rk3.15} we have $\nu\left({{r-i}\choose r-m}\right) = 0$ proving the claim in this case also. Therefore taking $f = \sum_{0\leq i \leq c-1}d_if^{i}$, where $f^{i}$ are as in Proposition \ref{gen 2}(i), gives $(T-a_p)(f) \equiv\left[ g^0_{2,0}, F_m(x,y)\right]$.

\textbf{ Case (iii)} $(p-1)+b-c\leq m<2(p-1)+b-c$ and $(b, \ m)\not= (2c-p+1, \ c)$\\
Observe that in this case $c\geq 2$, and if $c = 2$ then by Remark \ref{rk 4.5} we can take $l = 0$ in  Proposition \ref{gen 2}(i) giving $(T-a_p)(f^0) \equiv\left[ g^0_{2,0}, F_m(x,y)\right]$ for above values of $m$. 
 For $c\geq 3$, we consider  the following matrix $A = (a_{j,i})\in M_{c-1}(\mathbb{Z}_p)$ given by 
\[
   a_{j,i} =     
                    \begin{cases}
                       \frac{{{r-(b-m+j(p-1))}\choose i}}{{m\choose i}} & \text{if}\ \ 2\leq j\leq c-1,\ 0\leq i\leq c-2\\                       
                     1           & \text{if} \ \  j = c,\ 0\leq i\leq c-2 \\
                  
                         \end{cases}
   \]  
  \[   
 \implies  \ \ \ \ \ \ \ \ \ \ a_{j,i} \equiv 
                            \begin{cases}
                            \frac{{{m-c+j}\choose i}}{{m\choose i}} & \text{if} \ \ 2\leq j\leq c-1,\ 0\leq i\leq c-2\\
                            1            & \text{if}\ \ j=c,\ 0\leq i\leq c-2.\\ 
                           
                            \end{cases}        \hspace{8em}
 \]\\
 Observe that 
\begin{eqnarray*}
\det \left(A\right) &\equiv& \frac{1}{\underset{0\leq i\leq c-2}\Pi{m\choose i}} \det\left(\left({{m-c+j}\choose i}\right)_{\substack{2\leq j\leq c\\ 0\leq i\leq c-2}}\right)\\
&\equiv&\frac{1}{\underset{0\leq i\leq c-2}\Pi{m\choose i}}\\
&\not\equiv& 0\bmod p
 \end{eqnarray*}
 as $\det\left(\left({{m-c+j}\choose i}\right)_{\substack{2\leq j\leq c\\ 0\leq i\leq c-2}}\right) = 1$.  Latter claim follows from Lemma \ref{grinberg} after replacing $j$ by $j-1$ and $i$ by $i+1$, and thus $A\in \mathrm{GL}_{c-1}(\mathbb{Z}_p)$.
 Proceeding exactly as in Case (ii) above (with $0\leq i\leq c-2$, $2\leq j\leq c-1$) and taking $f = \sum_{0\leq i \leq c-2}d_if^{i}$, where $f^{i}$ are as in Proposition \ref{gen 2} (ii), we have $(T-a_p)(f) \equiv\left[ g^0_{2,0}, F_m(x,y)\right]$.  We note that Proposition \ref{gen 2}(i) is applicable since $c-2<m-\nu({{r-i}\choose {r-m}})$ by Lemma \ref{rk3.15} for $0\leq i\leq c-2$.   
 
\textbf{Case (iv)} $(b, \ m) = (2c-p+1, \ c)$\\
In this case consider the following matrix $A = (a_{j,i})$ where 
\[a_{j,i} = 
\begin{cases}
\frac{{{r-(b-m+j(p-1))}\choose i}}{{m\choose i}} & \text{if}\quad 1\leq j\leq c-1, 0\leq i\leq c-1\\
1 & \text{if}\quad j = c, \ 0\leq i\leq c-1.
\end{cases}
\] 
Exactly same computation in Case(ii) above gives (\ref{eq 1 case ii m>c}) and (\ref{eq 2 case ii m>c}) in this case also.  So taking $f = \sum_{0\leq i \leq c-1}d_if^{i}$, where $f^{i}$ are as in Proposition \ref{gen 2}(ii),  we have $(T-a_p)(f) \equiv\left[ g^0_{2,0}, F_m(x,y)\right]$.  We note that Proposition \ref{gen 2}(ii) is applicable in the range $0\leq i\leq c-1$. This is clear for $i\leq c-2$ as $\nu\left({{r-i}\choose r-m}\right)\leq 1$ by Lemma \ref{rk3.15},  and for $ i = c-1$ this follows since ${{r-(c-1)}\choose r-m} = r-(c-1)\not\equiv 0\bmod p$. \\

Thus in each of the above cases we have shown that $P([g, \ F_m(x, y)]) = 0$ for $c+1-\epsilon\leq m\leq \lfloor\nu(a_p)\rfloor$ if $(b, \ c) \not= (p, \ 0)$ and for $2\leq m\leq \lfloor\nu(a_p)\rfloor$ if $(b, \ c) = (p, \ 0)$.  We also observe that $F_m(x,y)$ generates $\frac{V^{(m)}_r}{V^{(m+1)}_r}$ using Lemma \ref{lm3.2}. Hence Lemma \ref{lm m>c} gives our result by taking $G_m(x,y) = F_m(x,y)$.
\end{proof}

\begin{thm}{\label{combining}}
Let $r = s+p^t(p-1)d $, $\  s=b+c(p-1)$ such that $p\nmid d$, $2\leq b\leq p$ and $0\leq c\leq p-2$. Fix $a_p$ such that $s>2\nu(a_p)$ and $c<\nu(a_p)<\text{min}\{\frac{p}{2}+c-\epsilon,\ p-1\}$ where $\epsilon$ is defined as in (\ref{dfn epsilon}).  Further we assume $t\geq 2\nu(a_p)$ if $b\geq 2c-1$ and $t>2\nu(a_p)+\epsilon-1$ if $b\leq 2c-2$. \\
(I) If $(b,c) \not= (p,0)$ then there is a surjection 
$$\ind^G_{KZ}\left( \frac{V^{(c-\epsilon)}_r}{V^{(c+1-\epsilon)}_r}\right)\rightarrow\bar{\Theta}_{k',a_p}.$$
(II) If $(b,c) = (p,0)$ and $\nu(a_p)>1$ then there is a surjection 
$$\ind^G_{KZ}\left( \frac{V^{(1)}_r}{V^{(2)}_r}\right)\rightarrow\bar{\Theta}_{k',a_p}.$$
\end{thm}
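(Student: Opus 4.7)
The theorem is a clean consequence of the two propositions established in the previous section. The plan is to combine the upper-bound information from Proposition \ref{m>c} (that $\indkg(V^{(c+1-\epsilon)}_r)\subseteq\kerp$, or $\indkg(V^{(2)}_r)\subseteq\kerp$ in the exceptional case) with the lower-bound surjection from Proposition \ref{m<slope}. Before doing this, one checks that the hypotheses of the theorem imply those of both propositions: the inequality $s>2\nu(a_p)>2c$ gives $s\geq 2c$ as required by Proposition \ref{m<slope}, and the bound $t>2\nu(a_p)+\epsilon-1$ (when $b\leq 2c-2$, so $\epsilon\geq 1$) yields $t\geq 2\nu(a_p)$.

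For Part (I), Proposition \ref{m<slope} provides the surjection
\[
\phi:\ \indkg\!\left(V^{(c-\epsilon)}_r/V^{(\lfloor\nu(a_p)\rfloor+1)}_r\right)\twoheadrightarrow \bar{\Theta}_{k',a_p}
\]
induced from $P$, while Proposition \ref{m>c}(i) gives $\indkg(V^{(c+1-\epsilon)}_r)\subseteq\kerp$. Since $V^{(c+1-\epsilon)}_r\subseteq V^{(c-\epsilon)}_r$, the subobject $\indkg(V^{(c+1-\epsilon)}_r/V^{(\lfloor\nu(a_p)\rfloor+1)}_r)$ of the source of $\phi$ is contained in $\ker\phi$, so $\phi$ descends to the quotient $\indkg(V^{(c-\epsilon)}_r/V^{(c+1-\epsilon)}_r)$ as required.

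For Part (II), with $(b,c)=(p,0)$ we have $\epsilon=0$ and Proposition \ref{m>c}(ii) supplies the surjection from $\indkg(V_r/V^{(2)}_r)$; one must further eliminate the $m=0$ layer, and this is precisely where the extra hypothesis $\nu(a_p)>1$ enters. The sequence \eqref{r' = p} with $n=0$ decomposes $V_r/V^{(1)}_r$ into the sub $V_1$ (generated by the classes of $x^r, y^r$) and the quotient $V_{p-2}\otimes D$ (generated by the class of $x^{r-1}y$). Running the argument of Subcase (ii) of Case (i) in the proof of Proposition \ref{m<slope}, with $\nu(a_p)>1$ now playing the role that the inequality ``$1\leq c<\nu(a_p)$'' played there, Remark $4.4$ of \cite{KBG} places $x^r, y^r, x^{r-1}y, xy^{r-1}$ in $\kerp$. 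Applying Lemma \ref{lm m<c} with $n=1$ and $W_0$ the subrepresentation generated by these monomials yields the desired factorization through $\indkg(V^{(1)}_r/V^{(2)}_r)$. The main subtlety lies not in any deep calculation here but in ensuring the compatibility of the two sets of hypotheses and in correctly adapting the $m=0$ elimination of Proposition \ref{m<slope} to the exceptional case $(b,c)=(p,0)$.
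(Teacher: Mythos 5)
Your argument is correct and follows essentially the same route as the paper: part (I) is obtained by descending the surjection of Proposition \ref{m<slope} along the vanishing supplied by Proposition \ref{m>c}(i), and part (II) combines Proposition \ref{m>c}(ii) with the elimination of the $n=0$ layer of \eqref{r' = p} via Remark 4.4 of \cite{KBG}, using $\nu(a_p)>1$ exactly where the paper does. Your explicit verification that the theorem's hypotheses imply those of both propositions is a welcome (if routine) addition.
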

\begin{proof}
(I) Let $\nu := \lfloor\nu(a_p)\rfloor$. Proposition \ref{m>c} gives that $\ind^G_{KZ}\left( \frac{V^{(c+1-\epsilon)}_r}{V^{(\nu+1)}_r}\right)$ is contained in $\kerp$.  Hence the desired result follows from Proposition \ref{m<slope} since $\ind^G_{KZ}\left( \frac{V^{(c+1-\epsilon)}_r}{V^{(\nu+1)}_r}\right)\subset\ind^G_{KZ}\left( \frac{V^{(c-\epsilon)}_r}{V^{(\nu+1)}_r}\right)$.\\

(II) If $(b,\ c) = (p, \ 0)$ then by Proposition \ref{m>c} we have 
$P: \ind^G_{KZ}\left( \frac{V_r}{V^{(2)}_r}\right)\rightarrow\bar{\Theta}_{k',a_p}$.  Using (\ref{r' = p}) with $n = 0$ gives that $\indkg\left(\frac{V_r}{V^{(1)}_r}\right)$ is generated by $x^r$ and $x^{r-1}y$. Observe that both of these monomials are in $\kerp$ by Remark $4.4$ of \cite{KBG}.  Therefore, 
$P$ factors through $\ind^G_{KZ}\left( \frac{V^{(1)}_r}{V^{(2)}_r}\right)$ as desired. 
\end{proof}
\pagebreak
\section{Main Results}

\begin{lemma}{\label{vrc 1}}
Let $k' = r+2,\  r=s+p^t(p-1)d$ where $s =  b+c(p-1), \ p\nmid d, \ 2\leq b\leq p, \  0\leq c\leq p-2,  \ 1\leq t$,  and $0\leq n\leq p-1$. If the map
\begin{equation}{\label{map p}}
P:\ind^G_{KZ}\left( \frac{V^{(n)}_r}{V^{(n+1)}_r}\right)\rightarrow\bar{\Theta}_{k',a_p}
\end{equation}
 is surjection.  Further if $(b,n)\not\in\{(p-2, \ 0), (p,0),(p,1)\}$ and also $b\not\in\{2n\pm 1, 2(n+1)-p, 2n-p\}$ then 
 
 $$\bar{V}_{k',a_p}\cong \begin{cases}
                                              \ind\left(\omega^{b+ n(p-1) +1}_2\right) & \text{if}\quad 2n+1\leq b\leq p\\
                                              \ind\left(\omega^{b+ (n+1)(p-1) +1}_2\right)) & \text{if}\quad 2n+1-(p-1)\leq b\leq 2n\\
                                             \ind\left(\omega^{b+ (n+2)(p-1) +1}_2\right) & \text{if}\quad 2(n+1)-2(p-1)\leq b\leq 2n-(p-1).\\
                                              \end{cases}$$

%$$\bar{\Theta}_{k',a_p}\cong \begin{cases}
                                              %\pi(b-2m,0,\omega^{m}) & \text{if}\quad 2m+1\leq %b\leq p\\
                                              %\pi(p-1+b-2m,0,\omega^{m}) & \text{if}\quad 2m%+1-(p-1)\leq b\leq 2m\\
                                              %\pi(2(p-1)+b-2m,0,\omega^{m}) & \text{if}\quad 2(m+1)-2(p-1)\leq b\leq 2m-(p-1).\\
                                             % \end{cases}$$
\end{lemma}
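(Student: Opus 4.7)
The plan is to apply the mod $p$ local Langlands correspondence to the two $KZ$-Jordan--H\"older factors of $\frac{V^{(n)}_r}{V^{(n+1)}_r}$, using the exclusions on $b$ and $(b,n)$ to rule out reducible alternatives for $\bar{V}_{k', a_p}$.

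First I identify $r'$ satisfying $r - n(p+1) = r' + d'(p-1)$ with $p \leq r' \leq 2p - 2$. Since $r - n(p+1) \equiv b - 2n \pmod{p-1}$, a direct check gives $r' = (b-2n) + (p-1)$, $(b-2n) + 2(p-1)$, or $(b-2n) + 3(p-1)$ in the three cases $2n+1 \leq b \leq p$, $2n+1-(p-1) \leq b \leq 2n$, and $2(n+1)-2(p-1) \leq b \leq 2n-(p-1)$ respectively. The exclusions $b \neq 2n+1$ and $b \neq 2(n+1)-p$ guarantee $r' \neq p$, so the two-term filtration \eqref{r' not p} applies in all three regimes rather than the exceptional sequence \eqref{r' = p}.

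Next, applying the exact functor $\indkg$ to \eqref{r' not p} and composing with the given surjection $P$, I see that $\bar{\Theta}_{k', a_p}$ is a quotient of a two-step extension built from $\indkg(V_{r'-(p-1)} \otimes D^n)$ and $\indkg(V_{2(p-1)-r'} \otimes D^{n+r'-(p-1)})$. Hence every composition factor of $\bar{\Theta}^{ss}_{k', a_p}$ has the form $\pi(a, \lambda, \eta)$ with $(a, \eta)$ equal to $(r'-(p-1), \omega^n)$ or $(2(p-1)-r', \omega^{n+r'-(p-1)})$ and some $\lambda \in \bar{\mathbb{F}}_p$. Using $\bar{\Theta}^{ss}_{k',a_p} \cong LL(\bar{V}_{k',a_p})$, the irreducible branch $\lambda = 0$ of either factor gives respectively $\ind(\omega_2^{r'-p+2+n(p+1)})$ or $\ind(\omega_2^{2p-1-r'+(n+r'-(p-1))(p+1)})$; an elementary computation modulo $p^2 - 1$ shows the second exponent equals $p$ times the first, so the two candidate Galois representations are isomorphic. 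Substituting the explicit value of $r'$ in each case yields the claimed exponents $b + n(p-1) + 1$, $b + (n+1)(p-1) + 1$, and $b + (n+2)(p-1) + 1$.

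The main obstacle is excluding the reducible branch, in which $\bar{\Theta}^{ss}_{k', a_p}$ would decompose via mod $p$ Langlands as $\pi(r_0, \lambda, \eta)^{ss} \oplus \pi([p-3-r_0], \lambda^{-1}, \omega^{r_0+1}\eta)^{ss}$ with $\lambda \neq 0$. For such a decomposition to fit inside the two-step extension above, one of two coincidences must occur: either $(p+1) \mid M$ so that $\ind(\omega_2^M)$ already degenerates, or the self-pairing condition $[p-3-r_0] = r_0$ together with $\omega^{r_0+1}\eta = \eta$, which for $p \geq 7$ forces $r_0 = p - 2$. A case-by-case check shows the first happens exactly at $(b, n) = (p, 0)$ in Case (i), and the second at $(b, n) \in \{(p-2, 0), (p, 1)\}$ in Case (i), at $b = 2n - 1$ in Case (ii), and at $b = 2n - p$ in Case (iii). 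These are precisely the excluded configurations, so under the hypotheses of the lemma $\bar{V}_{k', a_p}$ must be irreducible of the asserted form.
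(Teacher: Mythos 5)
Your argument is correct and follows essentially the same route as the paper: both reduce to the two-term Jordan--H\"older filtration of $V_r^{(n)}/V_r^{(n+1)}$ and then invoke the mod $p$ local Langlands correspondence to read off $\bar{V}_{k',a_p}$, the paper simply delegating the translation and the reducibility exclusion to Propositions 3.1--3.3 of Buzzard--Gee where you unpack them by hand. The one point you assert without checking is that a reducible $\bar{V}_{k',a_p}$ cannot arise with $\pi(r_0,\lambda,\eta)$ realized in one Jordan--H\"older factor and $\pi([p-3-r_0],\lambda^{-1},\omega^{r_0+1}\eta)$ in the other; this cross-pairing is indeed impossible because $[p-3-(r'-(p-1))]\equiv\bigl(2(p-1)-r'\bigr)-2\not\equiv 2(p-1)-r'\pmod{p-1}$, but the verification should be recorded.
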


\begin{proof}
We begin by observing that if $a \equiv r-n(p+1)\bmod (p-1)$ where $1\leq a\leq p-1$  then \eqref{r' = p} and \eqref{r' not p} give
$$0\longrightarrow V_{a}\otimes D^n\longrightarrow\frac{V^{(n)}_r}{V^{(n+1)}_r}\longrightarrow V_{p-1-a}\otimes D^{a+n}\longrightarrow 0.$$
Now using Propositions 3.1-3.3 of \cite{KBG} we deduce that $P$ factors through exactly one of the subquotients above,  and that $\bar{\Theta}_{k',a_p}$ is reducible only if $a$ or $p-1-a$ equals $p-2$.  Thus,  the reducible cases occur only if $(b,n)\in\{(p-2, \ 0), (p,0),(p,1)\}$ or if $b \in\{2n\pm 1, 2(n+1)-p, 2n-p\}$.  In the generic cases where $(b,n)\not\in\{(p-2, \ 0), (p,0),(p,1)\}$ and $b \not\in\{2n\pm 1, 2(n+1)-p, 2n-p\}$ we further note that we obtain the same irreducible representation irrespective of whether the map $P$ factors through the submodule or the quotient (using the classification of smooth admissible mod $p$ representations of $\gl$). Thus we have (by Proposition 3.3 of \cite{KBG}) $\bar{V}_{k',a_p}$ as given above. 
\end{proof}

Now suppose $2\leq b\leq p$ and $0\leq c\leq p-2$. Let us define a set $E'$ of ordered pair $(b, \ c)$ as:
$$ E' = \{(p-2, \ 0), (p,0),(p,1),(2c+1,\ c), (2c-1,\ c), (2c-3, \ c),(2c-p, \ c), (2c-2-p, \ c),(2c-4-p, \ c)\}.$$
The set $E'$ denotes the set of exceptional points $(b,\ c)$ at which $\bar{\Theta}_{k', a_p}$ may be reducible.  We obtain $E'$ from Theorem \ref{combining} and Lemma \ref{vrc 1}. 

\begin{prop}{\label{final prop}}
Let $k'= r+2$  and  $k = s+2$. Assume all the hypotheses of Theorem \ref{combining}.  If $b \not\in\{2c+1, \ 2c-1, \ 2c-p, \ 2(c-1)-p\}$ and also $(b,c)\not =(p,0)$ then $\bar{V}_{k', a_p}\cong  \ind\left(\omega^{k-1}_2\right)$.
\end{prop}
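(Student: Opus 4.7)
The plan is to combine Theorem~\ref{combining} with Lemma~\ref{vrc 1}, and to settle the residual non-generic cases with the help of the kernel monomials of Propositions~\ref{mono 1} and~\ref{mono 1.2}. By Theorem~\ref{combining}(I), since $(b,c)\ne(p,0)$, one has a surjection
\[
P : \indkg\!\left(\frac{V^{(n)}_r}{V^{(n+1)}_r}\right) \twoheadrightarrow \bar{\Theta}_{k',a_p}, \qquad n = c-\epsilon.
\]
The Jordan--H\"older structure of this successive quotient is given by \eqref{r' = p}--\eqref{r' not p}: writing $1\le a\le p-1$ with $a\equiv b-2n\pmod{p-1}$,
\[
0 \to V_a \otimes D^n \to \frac{V^{(n)}_r}{V^{(n+1)}_r} \to V_{p-1-a}\otimes D^{a+n} \to 0.
\]
Via the mod $p$ local Langlands correspondence, at most one of the two Jordan--H\"older factors (the one of the form $V_{p-2}\otimes D^{\bullet}$, when present) corresponds to a reducible representation of $G_p$; the aim is to force $P$ to factor through the irreducible factor in every case covered by the hypotheses.

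In the generic situation, namely when $b \not\in \{2c+1,2c-1,2c-3,2c-p,2c-2-p,2c-4-p\}$ and $(b,c)\not\in\{(p-2,0),(p,1)\}$, Lemma~\ref{vrc 1} applies directly and yields $\bar{V}_{k',a_p}\cong \ind(\omega^{b+c(p-1)+1}_2) = \ind(\omega^{k-1}_2)$ after substituting $n = c-\epsilon$ into each of its three cases. Matching the bad list $\{2n+1,2n-1,2(n+1)-p,2n-p\}$ of Lemma~\ref{vrc 1} against the $\epsilon$-intervals of \eqref{dfn epsilon} shows that, once the proposition's explicit exclusions $b \not\in \{2c+1,2c-1,2c-p,2c-2-p\}$ and $(b,c)\ne(p,0)$ are imposed, exactly four cases remain open: $b=2c-3$ (forcing $\epsilon=1$), $b=2c-4-p$ (forcing $\epsilon=2$), $(b,c)=(p-2,0)$, and $(b,c)=(p,1)$. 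In each, one of $a$ or $p-1-a$ equals $p-2$, so Lemma~\ref{vrc 1} alone does not preclude reducibility.

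To settle these four exceptional cases, the plan is to use the kernel monomials $\{P_{a,m}\}$ from Propositions~\ref{mono 1} and~\ref{mono 1.2} to annihilate the bad Jordan--H\"older factor. The explicit first/second maps in \eqref{r' = p}--\eqref{r' not p} identify the generators of the bad factor (either $V_a\otimes D^n$ or $V_{p-1-a}\otimes D^{a+n}$, as appropriate) as concrete monomials in $V^{(n)}_r$ modulo $V^{(n+1)}_r$. Writing each such generator as an $\bar{\mathbb{F}}_p$-linear combination of the $P_{a,m}$'s modulo $V^{(n+1)}_r+\kerp$ forces $P$ to annihilate the bad factor, so $P$ factors through the good one; mod $p$ local Langlands together with the argument in Lemma~\ref{vrc 1} then delivers $\bar{V}_{k',a_p}\cong \ind(\omega^{k-1}_2)$.

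The main obstacle is the combinatorial verification in the four exceptional cases. For $(b,c)=(p-2,0)$ and $(p,1)$, where $c$ is very small, the supply of monomials furnished by Propositions~\ref{mono 1} and~\ref{mono 1.2} is narrow, and one must supplement with the small-$c$ form of \eqref{r' = p} together with the action computation underlying Remark~$4.4$ of \cite{KBG} to handle the submodule $V_1 \subset V_r/V^{(1)}_r$ (respectively its $n=1$ analogue). For $b=2c-3$ and $b=2c-4-p$, the analysis reduces to verifying, via Lucas' theorem on the binomials appearing in the generators of the bad factor, that the needed monomials lie in the span of the $P_{a,m}$'s modulo $V^{(n+1)}_r+\kerp$; this in turn rests on the same style of binomial identities collected in \S3 of the paper, now applied on the ``wrong'' side of the short exact sequence.
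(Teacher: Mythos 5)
Your architecture --- Theorem \ref{combining} plus Lemma \ref{vrc 1} for generic $b$, then a case-by-case treatment of the four surviving exceptional cases $b=2c-3$, $b=2c-4-p$, $(b,c)=(p-2,0)$, $(b,c)=(p,1)$ --- is exactly the paper's, and your identification of those four cases is correct. For $b=2c-3$ and $b=2c-4-p$ your plan also matches what the paper does: the generator of the bad factor is $\theta^{n}x^{r-n(p+1)}$ with $n=c-1$ (resp.\ $c-2$), and expanding the power of $\theta$ writes it as a signed sum of monomials $x^{r-b+m-j(p-1)}y^{b-m+j(p-1)}$ with $m=c-2$ (resp.\ $m=c-3$), each of which already lies in $\kerp$ by Proposition \ref{mono 1} (resp.\ \ref{mono 1.2}); no new Lucas-type verification is needed beyond what those propositions supply. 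The case $(b,c)=(p-2,0)$ is likewise immediate, though note the bad factor there is the \emph{submodule} $V_{p-2}$ generated by $x^{r}$ (not $V_1$, which is the quotient); $x^r\in\kerp$ by \cite{KBG}.

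The genuine gap is the case $(b,c)=(p,1)$. There $n=1$ and the factor to be killed is $V_{p-2}\otimes D$, generated by $[1,\ \theta y^{r-(p+1)}]$. None of the tools you cite applies: Propositions \ref{mono 1} and \ref{mono 1.2} require $1\leq m<c$, which is vacuous for $c=1$, and Remark $4.4$ of \cite{KBG} yields $\indkg\left(V^{(n)}_r\right)\subset\kerp$ only for $n>\nu(a_p)$, which fails here since $n=1<\nu(a_p)$. The paper must instead construct an explicit $f=-f_1+f_2+f_3/a_p-f_0$ with $(T-a_p)(f)\equiv[1,\ \theta y^{r-(p+1)}]$, a full page of $T^{\pm}$ computations relying on Lemma \ref{srjm}, the congruence ${r\choose p-1}\equiv 1\bmod p$, and the hypotheses $t\geq 2\nu(a_p)$ and $\nu(a_p)>1$. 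Your proposal neither contains nor points to such a computation, so as written the $(p,1)$ case is unproved.
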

\begin{proof}
Since $(b, \ c)\not= (p, \ 0)$ Theorem \ref{combining} gives
$$P:\ind^G_{KZ}\left( \frac{V^{(c-\epsilon)}_r}{V^{(c+1-\epsilon)}_r}\right)\twoheadrightarrow\bar{\Theta}_{k',a_p}.$$
For $2c-1\leq b\leq p$ and $(b, \ c)\not\in E'$ we use Lemma \ref{vrc 1} with $n = c$ to get\vspace{2mm}\\
$\hspace*{10em}\bar{V}_{k', a_p}\cong
\begin{cases}
\ind\left(\omega^{b+c(p-1)+1}_2\right) & \text{if}\quad 2c+1\leq b\leq p \vspace{2mm}\\
\ind\left(\omega^{b+c(p-1)+p}_2\right) & \text{if}\quad 2c-1\leq b\leq 2c.
\end{cases}$\\
Therefore we have $\bar{V}_{k',a_p}\cong \ind\left(\omega^{k-1}_2\right)$. In the second case this follows since we have $b=2c$ and so $\omega^{b+c(p-1)+p}_2$ is conjugate to $\omega^{k-1}_2$ (using $b=2c$, $p(k-1) - (b+c(p-1)+p) = c(p^2-1)$). The remaining cases of $b$ are also treated similarly using Lemma \ref{vrc 1} to obtain the desired reduction outside the set $E'$. Now we will deal with some of the points in $E'$.

\textbf{Cases (i)} $(b, \ c) = (p-2, \ 0)$\\
We apply (\ref{r' not p}) (with $n = 0$ and $r' = 2p-3$) to see that the image of  $\ind^{G}_{KZ}\left(V_{p-2}\right)$ in $\ind^{G}_{KZ}\left(\frac{V_r}{V^{(1)}_r}\right)$ is generated by $[1,\ x^r]$ which belongs to $\kerp$ by Remark $4.4$ of \cite{KBG}. Hence $P$ factors through $\ind^G_{KZ}\left(V_1\otimes D^{p-2}\right)$. Therefore  Proposition $3.3$ of \cite{KBG} gives $\bar{V}_{k',  a_p}\cong \ind\left(\omega^{2+(p-2)(p+1)}_2\right)$.  We conclude by observing that $\omega^{2+(p-2)(p+1)}_2$ is conjugate to $\omega^{k-1}_2$ as $k  = p$ and $2+(p-2)(p+1)= p(k-1)$. 

\textbf{ Case (ii)} $(b, \ c) = (p, \ 1)$\\
Let $f_1,f_2,f_3\in \indkg\left(\symqp\right)$ given by 
\begin{eqnarray*}
f_1 &=& \left[ 1,\frac{1}{a_p}(x^py^{r-p}-x^{r-(p-1)}y^{p-1})\right]\\
f_2 &=&\sum_{\lambda\in I^*_1} \left[ g^0_{1,\lambda},\frac{1}{\lambda^p(p-1)}(y^r-x^{r-s}y^s)\right]\\
f_3 &=& \left[ 1, \underset{\substack{s-1\leq j<r-1\\j\equiv 0\bmod (p-1)}}\sum{r\choose j}x^{r-j}y^j\right].
\end{eqnarray*}
Now
\begin{align*}
	T^+(f_1) &=& \sum_{\mu\in I^*_1}\left[ g^0_{1,\mu}, \sum_{0\leq j\leq p-1}\frac{p^j(-\mu)^{r-p-j}}{a_p}\left( {{r-p}\choose j}-{{p-1}\choose j}\right)x^{r-j}y^j\right]\\
	&&+ \sum_{\mu\in I_1}\left[ g^0_{1,\mu}, \sum_{p\leq j\leq r-p}\frac{p^j{{r-p}\choose j}(-\mu)^{r-p-j}}{a_p}x^{r-j}y^j\right]\\
	&&-\left[g^0_{1,0},\frac{p^{p-1}}{a_p}x^{r-(p-1)}y^{p-1}\right].
\end{align*}
Here we observe that first sum is zero mod $p$ because for $j\geq 1$,  $j+t-\nu(j!)-\nu(a_p)\geq t+1-\nu(a_p)>0$ as $\nu\left({{r-p}\choose j}-{{p-1}\choose j}\right)\geq t-\nu(j!)$ and the last two summation are zero mod $p$ as $j-\nu(a_p)>0$ for $j\geq p-1$. 
$$T^-(f_1) = \left[\alpha,\frac{p^p}{a_p}x^py^{r-p}-\frac{p^{r-(p-1)}}{a_p} x^{r-(p-1)}y^{p-1}\right]$$
Here we note that $p-\nu(a_p)>0$ and $r-(p-1)\geq p$.  Therefore we have $T^+(f_1),T^-(f_1)$ both are zero mod\ $p$. Hence 
\begin{equation}{\label{f'_1}}
	(T-a_p)(-f_1) = \left[ 1,(x^py^{r-p}-x^{r-(p-1)}y^{p-1})\right].
\end{equation}
Now
\begin{align*}
 &T^+\left( \left[ g^0_{1,\lambda},\frac{1}{\lambda^p(p-1)}(y^r-x^{r-s}y^s\right]\right)=\sum_{\mu\in I^*_1}\left[ g^0_{2,\lambda+p\mu}, \sum_{0\leq j\leq s}\frac{p^j(-\mu)^{r-j}}{\lambda^p(p-1)}\left( {{r}\choose j}-{{s}\choose j}\right)x^{r-j}y^j\right]
\end{align*}
$$ \hspace{15em} + \sum_{\mu\in I_1}\left[ g^0_{2,\lambda +p\mu}, \sum_{s+1\leq j\leq r}\frac{p^j{{r}\choose j}(-\mu)^{r-j}}{\lambda^p(p-1)}x^{r-j}y^j\right]$$
$$\hspace{22em}-\left[g^0_{2,\lambda},\frac{p^s}{\lambda^p(p-1)}x^{r-s}y^s\right].$$
Here we observe that $T^+(f_2)\equiv 0\bmod p$.
\begin{eqnarray*}
	T^-\left( \left[ g^0_{1,\lambda},\frac{1}{\lambda^p(p-1)}(y^r-x^{r-s}y^s\right]\right) &=& \left[1, \sum_{0\leq j\leq r}\frac{{r\choose j}\lambda^{r-j}}{(p-1)\lambda^p}x^{r-j}y^j\right]\\
	&&- \left[1, \sum_{0\leq j\leq s}\frac{p^{r-s}{s\choose j}\lambda^{s-j}}{(p-1)\lambda^p}x^{r-j}y^j\right].\\
	\end{eqnarray*}
Hence we have
$$ T^-\left( \left[ g^0_{1,\lambda},\frac{1}{\lambda^p(p-1)}(y^r-x^{r-s}y^s\right]\right) = \left[1, \sum_{0\leq j\leq r}\frac{{r\choose j}\lambda^{r-j}}{(p-1)\lambda^p}x^{r-j}y^j\right]\hspace{1em}(\text{as}\quad r-s>0)\\$$
	\begin{eqnarray*}
	\implies T^-(f_2) &=& \left[1, \underset{\substack{0\leq j\leq r\\ j\equiv\ 0\bmod (p-1)}}\sum{r\choose j}x^{r-j}y^j\right]\\
	(T-a_p)(f_2) &=& \left[1, \underset{\substack{0\leq j\leq r\\j\equiv\ 0\mod (p-1)}}\sum{r\choose j}x^{r-j}y^j\right]\\
	\implies (T-a_p)(f_2) &=&\left[1,\ x^r\right]+\left[1,\ {r\choose p-1}x^{r-(p-1)}y^{p-1}\right]+ f_3 \\
	&&\hspace*{8em}+\left[1,\ {r\choose r-1}xy^{r-1}\right].
\end{eqnarray*}
Now note $r = p+p-1+p^t(p-1)d\implies {r\choose p-1}\equiv 1\bmod p$ by Lucas formula and ${r\choose r-1} = r\equiv -1\bmod p$.
\begin{equation}{\label{f'_2}}
	\implies (T-a_p)(f_2) =\left[1,\ x^r\right]+\left[1,\ x^{r-(p-1)}y^{p-1}\right]+ f_3 -\left[1,\ xy^{r-1}\right]
\end{equation}
\pagebreak\\
$$T^+\left(\frac{ f_3}{a_p}\right) = \sum_{\mu\in I^*_1}\left[ g^0_{1,\mu},\sum_{0\leq j\leq r}\frac{p^j(-\mu)^{r-1-j}}{a_p}\underset{\substack{s-1\leq i<r-1\\i\equiv 0\bmod (p-1)}}\sum{r\choose i}{i\choose j}x^{r-j}y^j\right]$$
$$\hspace{17em}+ \left[ g^0_{1,0},\underset{\substack{s-1\leq j<r-1\\ j\equiv 0\bmod (p-1)}}\sum\frac{p^j{r\choose j}}{a_p}x^{r-j}y^j\right].$$
Here we note that $j-\nu(a_p)>0$ for $j\geq p-1$ this gives that the first summation truncates to $j\leq p-2$ and the second summation is zero mod\ $p$.
$$\implies \ \ \ \ \ \ \  T^+\left(\frac{ f_3}{a_p}\right) = \sum_{\mu\in I^*_1}\left[ g^0_{1,\mu},\sum_{0\leq j\leq p-2}\frac{p^j(-\mu)^{r-1-j}}{a_p}S_{r,j,0,1}x^{r-j}y^j\right]$$
Since $c+m = 2$, so Lemma \ref{srjm} gives $\nu(S_{r,j,0,1})\geq t-c+1$ therefore  $T^+\left(\frac{ f_3}{a_p}\right)\equiv\ 0\bmod p$ as $t\geq 2\nu(a_p)$.
$$T^-\left(\frac{ f_3}{a_p}\right) = \left[ \alpha,\underset{\substack{s-1\leq j<r-1\\ j\equiv 0\bmod (p-1)}}\sum\frac{p^{r-j}}{a_p}x^{r-j}y^j\right]$$
Note that $r-j-\nu(a_p)\geq p-\nu(a_p)>0 \ \ \ \ \ \ \implies T^-\left(\frac{ f_3}{a_p}\right)\equiv \ \ 0\bmod p$ 
\begin{equation}{\label{f'_3}}
	(T-a_p)\left(\frac{ f_3}{a_p}\right) = - f_3
\end{equation}
Since $\nu(a_p)>1$,  using Remark of \cite{KBG} there exist $f_0,\in \ind^G_{KZ}\left(Sym^r(\bar{\mathbb{Q}}^2_p)\right)$ such that 
\begin{equation}{\label{f'_4}}
	(T-a_p)(f_0) = \left[1, x^r\right]
\end{equation}
%\begin{equation}{\label{f'_5}}
%(T-a_p)(f'_0) = \left[1,xy^{r-1}\right]
%\end{equation}
Now take $f = -f_1+f_2+\left(\frac{ f_3}{a_p}\right) -f_0$ then \eqref{f'_1},\ \eqref{f'_2},\ \eqref{f'_3}, \ \eqref{f'_4} imply 
$$\hspace{2em}(T-a_p)(f) = \left[ 1,(x^py^{r-p}-xy^{r-1})\right]$$
\begin{equation*}
	\implies \ \ \ \ \ \ \ \ \ \ \ \ (T-a_p)(f) = \left[1,\theta y^{r-(p+1)}\right]. \ \ \ \ \ \ \ \ \ \ \ \ \ \ \ \ \ \ \ \ \ \ \
\end{equation*}
Hence $[1, \ \theta y^{r-(p+1)}]\in\kerp$. Now we observe that (\ref{r' not p}) (with $n = 1$ and $r' = 2p-3$) gives  that the image of $\ind^{G}_{KZ}\left(V_{p-2}\otimes D\right)$ in $\ind^{G}_{KZ}\left(\frac{V^{(1)}_r}{V^{(2)}_r}\right)$ is generated by $[1, \ \theta y^{r-(p+1)}]$ which is contained in $\kerp$. Therefore the map $P$ factors through $\ind^G_{KZ}\left(V_1\right)$.  Hence by using Proposition $3.3$ of \cite{KBG} we have $\bar{V}_{k',a_p}\cong\ind\left(\omega^2_2\right)$.  Our claim follows since $\omega^2_2$ is conjugate to $\omega^{2p}_2$ (here $k-1 =  2p$).
%Using (4.1) of \cite{glover}  $ { \frac{V^{(1)}_r}{V^{(2)}_r}}\cong D\otimes{ \frac{V_{r-(p+1)}}{V^{(1)}_{r-(p+1)}}}$, by (4.2) of  \cite{glover} ${ \frac{V_{r-(p+1)}}{V^{(1)}_{r-(p+1)}}}\cong { \frac{V_{2p-3}}{V^{(1)}_{2p-3}}}$ and using above,\linebreak Lemma 5.1.3 of \cite{Br03b} we have following exact sequence 
%$$0\longrightarrow V_{p-2}\otimes D\longrightarrow\frac{V^{(1)}_r}{V^{(2)}_r}\longrightarrow V_1\otimes D^{p-1}\longrightarrow 0$$
% here we note that $y^{p-2}$ map to $\theta y^{r-(p+1)}$ via the first map. Therefore by equation \ref{f'_6} first JH factor does not contribute and map $P$ surject from $\ind^G_{KZ}V_1\otimes D^{p-1}$ then using mod$p$ LLC together with Proposition (3.3) of \cite{KBG} $\bar{\theta}_{k',a_p}\cong \pi(1,0,\omega^{p-1})$.\\

 \textbf{Case (iii)} $b = 2c-3$\\
 In this case we note that (\ref{r' not p}) (with $n = c-1$ and $r' = 2p-3$) gives that the image of $\ind^{G}_{KZ}\left(V_{p-2}\otimes D^{c-1}\right)$ in $\ind^{G}_{KZ}\left(\frac{V^{(c-1)}_r}{V^{(c)}_r}\right)$ is generated by $[1,\ \theta^{(c-1)}x^{r-(c-1)(p+1)}]$.  The latter belongs to $\kerp$ since 
 $$\theta^{(c-1)} x^{r-(c-1)(p+1)} = \underset {0\leq i\leq c-1}\sum (-1)^i{{c-1}\choose i}x^{r-(b-(c-2)+i(p-1))}y^{b-(c-2)+i(p-1)}$$
 and so every monomial on the right is in $\kerp$ by taking $m = c-2$ in Proposition \ref{mono 1}.  
Hence $P$ factors through $\ind^G_{KZ}\left(V_1\otimes D^{(c-2)}\right)$.  Therefore  Proposition $3.3$ of \cite{KBG} gives $\bar{V}_{k',a_p}\cong\ind\left(\omega^{2+(c-2)(p+1)}_2\right)$. Hence we have our result because $\omega^{2+(c-2)(p+1)}_2$ is conjugate to $\omega^{k-1}_2$ (as $k-1 = c(p+1)-2$ and  $p(k-1) - 2-(c-2)(p+1) = c(p^2-1)$).

 \textbf{Case (iv)} $b = 2(c-2)-p$\\
 In this case we note that by using (\ref{r' not p}) (with $n = c-2$ and $r' = 2p-3$) the image of $\ind^{G}_{KZ}\left(V_{p-2}\otimes D^{c-2}\right)$ in $\ind^{G}_{KZ}\left(\frac{V^{(c-2)}_r}{V^{(c-1)}_r}\right)$ is seen to be generated by $[1,\ \theta^{(c-2)}x^{r-(c-2)(p+1)}]$ which belongs to $\kerp$. This is clear by taking $m = c-3$ in Proposition \ref{mono 1.2} and observing that 
 $$\theta^{(c-2)} x^{r-(c-1)(p+1)} = \underset {0\leq i\leq c-2}\sum (-1)^i{{c-2}\choose i}x^{r-(b-(c-3)+(i+1)(p-1))}y^{b-(c-3)+(i+1)(p-1)}.$$
Hence $P$ surjects from $\ind^G_{KZ}\left(V_1\otimes D^{(c-3)}\right)$. Therefore  Proposition $3.3$ of \cite{KBG} gives  $\bar{V}_{k',a_p}\cong\ind\left(\omega^{2+(c-3)(p+1)}_2\right)\cong\ind\left(\omega^{k-1}_2\right)$. 
\end{proof}

\begin{cor}{\label{vkap}}
Let $p\geq 7$ be a prime and $k = s+2$. Assume all the hypotheses of Theorem \ref{combining}. If we further assume $b \not\in\{2c+1, \ 2c-1, \ 2c-p, \ 2(c-1)-p\}$ and $(b,c)\not =(p,0)$ then  $\bar{V}_{k,a_p}\cong \ind\left(\omega^{k-1}_2\right)$.
\end{cor}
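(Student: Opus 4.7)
The plan is to combine Proposition \ref{final prop} with Berger's local constancy theorem (Theorem \ref{Berger}) to transfer the reduction computed at arbitrarily high nearby weights back to the base weight $k$.

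First I would verify that Berger's numerical hypothesis $k > 3\nu(a_p) + \frac{(k-1)p}{(p-1)^2} + 1$ is satisfied. Rearranging, this is equivalent to $(k-1)\cdot\frac{(p-1)^2 - p}{(p-1)^2} > 3\nu(a_p)$, in which the left-hand coefficient is close to $1$ for $p \geq 7$. Using $k - 1 = b + c(p-1) + 1$ together with the imposed bound $\nu(a_p) < \min\{p/2 + c - \epsilon,\ p-1\}$ and the lower bound $k > 2\nu(a_p)+2$, this inequality can be confirmed by direct estimate in the cases allowed by the hypotheses; this explains the restriction $p \geq 7$ appearing in the corollary.

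With Berger's hypothesis in force, his theorem produces an integer $m = m(k, a_p)$ such that $\bar{V}_{k'', a_p} \cong \bar{V}_{k, a_p}$ whenever $k'' - k \in p^{m-1}(p-1)\mathbb{Z}^{\geq 0}$. I would then set $T := \max\{\lceil 2\nu(a_p)\rceil + \epsilon,\ m-1\}$, pick any positive integer $d$ coprime to $p$, and put $k' := k + p^{T}(p-1)d$. By construction $k'$ lies in Berger's arithmetic progression. Writing $k' = r + 2$ with $r = s + p^{T}(p-1)d$, the integer $T$ exceeds both $2\nu(a_p)$ (needed if $b \geq 2c-1$) and $2\nu(a_p) + \epsilon - 1$ (needed if $b \leq 2c-2$), so all hypotheses of Proposition \ref{final prop} are met at $k'$.

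Proposition \ref{final prop} then yields $\bar{V}_{k', a_p} \cong \ind(\omega_2^{k-1})$, and Berger's theorem transports this isomorphism back to the base weight, producing
\[
\bar{V}_{k, a_p} \;\cong\; \bar{V}_{k', a_p} \;\cong\; \ind(\omega_2^{k-1}),
\]
as claimed. The main obstacle is the verification of Berger's inequality at the extremes of the parameter range (when $b+c$ is small and $\nu(a_p)$ approaches $p/2 + c - \epsilon$); once that is dispatched, the corollary follows by formal juxtaposition of Proposition \ref{final prop} and Theorem \ref{Berger}.
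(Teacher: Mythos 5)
Your overall strategy (feed Proposition \ref{final prop} into Berger's theorem by choosing $k'$ in the intersection of the two arithmetic progressions) is the same as the paper's, and that part of your argument is fine. The gap is in your first step: the claim that Berger's inequality $k>3\nu(a_p)+\frac{(k-1)p}{(p-1)^2}+1$ "can be confirmed by direct estimate" from the bounds $\nu(a_p)<\min\{\frac{p}{2}+c-\epsilon,\ p-1\}$ and $k>2\nu(a_p)+2$ is false. As you note, the inequality is equivalent to $(k-1)\cdot\frac{(p-1)^2-p}{(p-1)^2}>3\nu(a_p)$, i.e.\ roughly $k-1>3\nu(a_p)$, whereas the hypothesis $k>2\nu(a_p)+2$ only gives $k-1>2\nu(a_p)+1$; when $\nu(a_p)$ is near its permitted upper end these are incompatible. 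Concretely, take $p=7$, $c=1$, $b=7$ (so $\epsilon=0$, $k=15$, none of the exclusions apply) and $\nu(a_p)=4.4<\min\{4.5,6\}$: all hypotheses of Theorem \ref{combining} hold, yet $3\nu(a_p)+\frac{(k-1)p}{(p-1)^2}+1\approx 16.9>15=k$, so Berger's theorem is not applicable and your argument breaks down.

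The missing idea is a dichotomy on the size of $\nu(a_p)$. If $\nu(a_p)>c+1$ then one does not need Berger at all: since $\lfloor\frac{k-2}{p-1}\rfloor\leq c+1<\nu(a_p)$ and $(p+1)\nmid(k-1)$ under the stated exclusions, the result of \cite{BLZ} gives $\bar{V}_{k,a_p}\cong\ind(\omega_2^{k-1})$ directly. In the complementary case $\nu(a_p)\leq c+1$, one has $3\nu(a_p)+1\leq 3c+4$, which is small compared to $k-1\geq c(p-1)+3$, and Berger's inequality can then be verified (with the residual case $c=0$, where $k\leq p+1$, handled instead by Breuil's computation of the reduction for small weights). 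Only after this case split does the juxtaposition of Proposition \ref{final prop} and Theorem \ref{Berger} that you describe go through.
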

\begin{proof}
We begin by observing that if $\nu(a_p)>c+1$ then the conclusion follows by \cite{BLZ} (note that $ p+1 \nmid k-1$ from hypothesis).  So from now on we will assume $\nu(a_p)\leq c+1$.  Observe that since $\nu(a_p)\leq c+1$ we have
\begin{eqnarray*}
3\nu(a_p)+\frac{(k-1)p}{(p-1)^2}+1 &\leq & 4(c+1)+\frac{b+1}{(p-1)}+\frac{k-1}{(p-1)^2}\\
&< &\begin{cases}
4(c+1)+2 & \text{if}\quad 2\leq b\leq p-3\\
4(c+1)+3 & \text{if}\quad p-2\leq b\leq p.
\end{cases}
\end{eqnarray*}
The last inequality follows as $k\leq (p-1)^2+3$ and $p\geq 5$.  If $c=0$ then $\bar{V}_{k,a_p}\cong \ind\left(\omega^{k-1}_2\right)$ by \cite{Br03b} as $k \leq p+1$. Therefore,  assuming $c \geq 1$ and $p\geq 7$ we get $k-4(c+1)\geq b$, giving us $k>3\nu(a_p)+\frac{(k-1)p}{(p-1)^2}+1$.  So by Theorem \ref{Berger} there exists a constant $m = m(k, a_p)$ such that for all $k'' \in k+p^{m-1}(p-1)\mathbb{Z}^{\geq 0}$ we have $\bar{V}_{k'',a_p}\cong \bar{V}_{k,a_p}$.  For $t$ as in Proposition \ref{final prop} we have $\bar{V}_{k',a_p}\cong \ind\left(\omega^{k-1}_2\right)$ for $k'\in k+p^t(p-1)\mathbb{N}$. Hence these two facts together gives $m(k, a_p)\leq t+1$, and so we have the desired result.
\end{proof}
\noindent We have our main result below.
\begin{thm}{\label{main result}}
Let $k =b+c(p-1)+2$ with $2\leq b\leq p$ and $0\leq c\leq p-2$. Fix $a_p$ such that  $s>2\nu(a_p)$ and $c<\nu(a_p)<\text{min}\{\frac{p}{2}+c-\epsilon,\ p-1\}$ where $\epsilon$ is defined as in (\ref{dfn epsilon}). Further if $b \not\in\{2c+1, \ 2c-1, \ 2c-p, \ 2(c-1)-p\}$ and $(b,c)\not =(p,0)$ then the Berger's constant $m(k,a_p)$ exists such that $m(k,a_p)\leq\lceil 2\nu(a_p)\rceil+\epsilon+1$.  Moreover,  $\bar{V}_{k',a_p}\cong \ind\left(\omega^{k-1}_2\right)$ for all $k' \in k+p^{t}(p-1)\mathbb{Z}^{\geq 0}$,  where $ t \geq \lceil 2\nu(a_p)\rceil+\epsilon$.
\end{thm}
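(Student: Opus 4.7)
The plan is to deduce Theorem \ref{main result} directly from Proposition \ref{final prop}, Corollary \ref{vkap}, and Berger's local constancy theorem (Theorem \ref{Berger}). The entire computational heart of the argument has been carried out earlier: Proposition \ref{final prop} identifies $\bar{V}_{k', a_p}$ at the auxiliary weights $k' = r+2$ with $r = s + p^{t}(p-1)d$, $p \nmid d$, while Corollary \ref{vkap} handles the base weight $k$ itself. What remains is to stitch these together into a single radius-of-local-constancy statement about $m(k, a_p)$.

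First, I would check that the hypothesis $t \geq \lceil 2\nu(a_p)\rceil + \epsilon$ in Theorem \ref{main result} implies the $t$-hypotheses of Theorem \ref{combining} (which Proposition \ref{final prop} inherits). When $b \geq 2c-1$ we have $\epsilon = 0$, so the required bound $t \geq 2\nu(a_p)$ follows at once from $\lceil 2\nu(a_p)\rceil \geq 2\nu(a_p)$. When $b \leq 2c-2$ we have $\epsilon \in \{1, 2\}$, so the required strict bound $t > 2\nu(a_p) + \epsilon - 1$ follows from $t \geq \lceil 2\nu(a_p)\rceil + \epsilon \geq 2\nu(a_p) + \epsilon > 2\nu(a_p) + \epsilon - 1$. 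The remaining slope and weight hypotheses $s > 2\nu(a_p)$ and $c < \nu(a_p) < \min\{p/2+c-\epsilon,\, p-1\}$ are identical in the two statements, and the forbidden values of $b$ and $(b,c)$ match verbatim those excluded in Proposition \ref{final prop}.

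Second, I would compute $\bar{V}_{k', a_p}$ for every $k' \in k + p^{t}(p-1)\mathbb{Z}^{\geq 0}$ with $t = \lceil 2\nu(a_p)\rceil + \epsilon$. The case $k' = k$ is exactly Corollary \ref{vkap}. For $k' = k + p^{t}(p-1) n$ with $n \geq 1$, write $n = p^{a} d$ with $p \nmid d$ and $a \geq 0$, so that $k' - k = p^{t+a}(p-1) d$ with $d \in \N$, $p \nmid d$. The shifted exponent $t + a \geq t$ still satisfies $t + a \geq \lceil 2\nu(a_p)\rceil + \epsilon$, so the verification in Step~1 applies and Proposition \ref{final prop} yields $\bar{V}_{k', a_p} \cong \ind(\omega_{2}^{k-1})$. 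Combined with $\bar{V}_{k, a_p} \cong \ind(\omega_{2}^{k-1})$ from Corollary \ref{vkap}, we obtain $\bar{V}_{k', a_p} \cong \bar{V}_{k, a_p}$ for every such $k'$, establishing the explicit reduction statement in Theorem \ref{main result}.

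Finally, the Berger inequality $k > 3\nu(a_p) + (k-1)p/(p-1)^{2} + 1$ has already been verified in the proof of Corollary \ref{vkap} under our standing hypotheses, so Theorem \ref{Berger} guarantees that $m(k, a_p)$ exists. Having exhibited local constancy for all $k' - k \in p^{t}(p-1)\mathbb{Z}^{\geq 0}$ with $t = \lceil 2\nu(a_p)\rceil + \epsilon$, we conclude $m(k, a_p) - 1 \leq t$, i.e., $m(k, a_p) \leq \lceil 2\nu(a_p)\rceil + \epsilon + 1$. There is no genuine obstacle at this final stage; the only delicate point is the bookkeeping observation that the single threshold $t \geq \lceil 2\nu(a_p)\rceil + \epsilon$ simultaneously dominates both the weaker $t \geq 2\nu(a_p)$ (when $\epsilon = 0$) and the strict $t > 2\nu(a_p) + \epsilon - 1$ (when $\epsilon \in \{1, 2\}$), all the mod $p$ local Langlands and kernel-of-$P$ analysis having been completed in the preceding sections.
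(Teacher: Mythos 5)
Your proposal is correct and follows essentially the same route as the paper, which states Theorem \ref{main result} as an immediate consequence of Proposition \ref{final prop} (for the nearby weights $k'>k$, after absorbing extra powers of $p$ into the exponent to meet the $p\nmid d$ requirement) and Corollary \ref{vkap} (for the centre $k$ itself, where either BLZ or Berger's theorem is invoked), with the hypothesis $t\geq\lceil 2\nu(a_p)\rceil+\epsilon$ checked to dominate the $t$-conditions of Theorem \ref{combining} exactly as you do. The assembly and the resulting bound $m(k,a_p)\leq t+1$ are as intended.
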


\noindent\textbf{Acknowledgements.}
We owe a great debt to the work in \cite{maam}, and also acknowledge the results in \cite{Berger12} and \cite{BLZ} critical to our work. The authors would like to express sincere gratitude to Shalini Bhattacharya for giving useful suggestions regarding this problem. The second author acknowledges the support received from the NBHM (under DAE, Govt. of India) Ph.D.  fellowship grant $0203/11/2017$/RD-II/$10386$. \\
%\noindent\textbf{Data Availability and Competing Interests Statement.}
%Data sharing not applicable to this article as no datasets were generated or analysed for this work. The authors have no conflicts of interest, financial or non-financial, to declare that are relevant to the content of this article. 

\end{document}